\DeclareSymbolFont{SY}{U}{psy}{m}{n}
\DeclareMathSymbol{\emptyset}{\mathord}{SY}{'306}
\theoremstyle{plain}
\newtheorem{thm}{Theorem}[section]
\newtheorem{corollary}[thm]{Corollary}
\newtheorem{lemma}[thm]{Lemma}
\newtheorem{proposition}[thm]{Proposition}
\newtheorem{definition}[thm]{Definition}
\newtheorem{theorem}[thm]{Theorem}
\theoremstyle{definition}
\newtheorem{example}[thm]{Example}
\newtheorem{question}{Question}
\newtheorem{remark}[thm]{Remark}
\numberwithin{equation}{section}
\begin{document}
\title[On the metric of the jet bundle and similarity of Cowen-Douglas operators]{On the metric of the jet bundle and similarity of Cowen-Douglas operators}

\author{Kui Ji} \author{Shanshan Ji} \author{Hyun-Kyoung Kwon} \author{Jing Xu}
\curraddr[Kui Ji and Shanshan Ji]{School of Mathematics, Hebei Normal University, Shijiazhuang, Hebei 050024, China}

\curraddr[Hyun-Kyoung Kwon]{Department of Mathematics and Statistics, University at Albany-State University of New York, Albany, NY, 12222, USA}

\curraddr[Jing Xu]{School of Mathematics and Science, Hebei GEO University, Shijiazhuang 050031, China}

\email[K. Ji]{jikui@hebtu.edu.cn}
\email[S. Ji]{jishanshan15@outlook.com}
\email[H. Kwon]{hkwon6@albany.edu}
\email[J. Xu]{xujingmath@outlook.com}

\thanks{The authors except for the third author were supported by the National Natural Science Foundation of China, Grant No. 12371129 and 12471123, and the Hebei Natural Science Foundation, Grant No. A2023205045. The second and the fourth authors appreciate the hospitality of the Department of Mathematics and Statistics at the University at Albany during their visit from Aug. 2023 to Feb. 2024.}

\subjclass[2020]{Primary 47B13; Secondary 47A45, 32L05, 51M15, 53C07}

\keywords{Dirichlet Space, Model Theorem, Multipliers, Weights, Backward Shift Operator, Similarity}

\begin{abstract}
The study of Cowen-Douglas operators not only involves traditional operator-theoretic tools but also concepts and results from complex geometry on holomorphic vector bundles. We make use of the ratio of the metric matrices first considered by Clark and Misra and a model theorem by Agler to describe the similarity of backward shift operators on analytic function spaces whose multiplier algebras are the space of bounded analytic functions. It is well-known that, in general, it becomes much more complicated to formulate a sufficient condition for similarity than a necessary one. We also give a sufficient condition for a Cowen-Douglas operator to be similar to the backward shift operator on the Dirichlet space with weights by introducing a condition on the jet bundle of a holomorphic vector bundle. Note that the multiplier algebras of these spaces do not coincide with the space of bounded, analytic functions as in other analytic functions spaces, requiring a different approach.
The results by M\"{u}ller on operator models related to Dirichlet shifts and by Kidane and Trent on the corona problem for the multiplier algebras of weighted Dirichlet spaces are indispensable tools in attaining this similarity result.
\end{abstract}

\maketitle

\section{Introduction}
Let ${\mathcal L}({\mathcal H})$ denote the algebra of bounded linear operators on a complex separable Hilbert space $\mathcal H$.
Recall that operators $T$ and $S$ in $\mathcal{L}(\mathcal{H})$ are \emph{unitarily equivalent} ($T\sim_{u}S$) if there is a unitary operator $U$ such that $T=U^{*}SU$. An operator $T$ is \emph{similar} to $S$ ($T\sim_{s}S$) if there is a bounded invertible operator $X$ satisfying $T=X^{-1}SX$. The problem of finding unitary or similarity invariants on an infinite-dimensional space $\mathcal{H}$ is no simple task. An effective approach to study the problem is to make use of operator models given in \cite{Agler,Agler2,AEM2002,AM2003,AA1992,CF1963,VM1988,GCR1959,GKP2022,GCR1960,NF1964}, where the shift operators on certain reproducing kernel Hilbert spaces are regarded as universal operators.

Let $\mathbb{D}$ denote the open unit disk in the complex plane $\mathbb{C}$ and let $f(z)=\sum\limits_{n=0}^{\infty}\hat{f}(n)z^{n}$ be the Taylor expansion of a holomorphic function $f$ on $\mathbb{D}$. For each real number $k$, we consider a corresponding reproducing kernel Hilbert space $\mathcal{H}_{k}$
defined by
$$\mathcal{H}_{k}:=\left\{f\in \text{Hol}(\mathbb{D}):\Vert f\Vert_{\mathcal{H}_{k}}^{2}=\sum\limits_{n=0}^{\infty}(n+1)^{k}|\hat{f}(n)|^{2}<\infty\right\}.$$
Note that we obtain well-known reproducing kernel Hilbert spaces by substituting different values for $k$: $k=1$ gives the Dirichlet space $\mathcal{D}$ (or $\mathcal{D}_{1}$), for $k\in(0,1)$, we have the weighted Dirichlet spaces $\mathcal{D}_{k}$, $k=0$ yields the Hardy space $H^{2}(\mathbb{D})$, and $k=-1$ results in the Bergman space $L^{2}(\mathbb{D})$.
The backward shift operator $M_z^*$ on a reproducing kernel Hilbert space $\mathcal{H}_{k}$ belongs to the class $\mathcal{B}_{1}(\mathbb{D})$ introduced by Cowen and Douglas in \cite{CD1,CD2}. Given an open connected subset $\Omega$ of $\mathbb{C}$, the Cowen-Douglas class $\mathcal{B}_{n}(\Omega)$ of rank $n$,
is defined as
$$\begin{array}{lll}\mathcal{B}_n(\Omega)=\{T\in \mathcal{L}(\mathcal{H}):
&(1)\,\,\Omega\subset \sigma(T)=\{w\in \mathbb{C}:T-w
\mbox{ is not invertible}\},\\
&(2)\,\,\mbox{dim ker }(T-w)=n \text{ for }w\in\Omega, \\
&(3)\,\,\bigvee\limits_{w\in \Omega}\mbox{ker }(T-w)=\mathcal{H},\text{ for a closed linear span $\bigvee$,}\\
&(4)\,\,\mbox{ran }(T-w)=\mathcal{H}\text{ for }w\in\Omega\}.
\end{array}$$

Every operator in $\mathcal{B}_{n}(\Omega)$ can be realized as the adjoint of the multiplication operator on a reproducing kernel Hilbert space consisting of holomorphic functions on $\Omega^{*}=\{w\in\mathbb{C}:\overline{w}\in\Omega\}.$
Moreover, the study of a Cowen-Douglas operator $T \in \mathcal{B}_n(\Omega)$ involves the Hermitian holomorphic vector bundle $\mathcal{E}_T$ and the curvature function $\mathcal{K}_{\mathcal{E}_T}$ defined to be
$$\mathcal{E}_T=\{(w, x) \in \Omega \times \mathcal{H}: x \in \ker(T-w)\},\quad\pi (w, x)=w,$$ and
$$\mathcal{K}_{\mathcal{E}_T}(w)=-\frac{\partial}{\partial\overline{w}}\left(h_{\mathcal{E}_T}^{-1}(w)\frac{\partial
h_{\mathcal{E}_T}(w)}{\partial w}\right),$$
where $h_{\mathcal{E}_T}(w)=\left (\langle \gamma_j(w),\gamma_i(w)\rangle \right)_{i,j=1}^{n}$ denotes the metric associated with a holomorphic frame $\{\gamma_1,\ldots,\gamma_n\}$ for $\mathcal{E}_{T}$. It is shown in \cite{CD1} that operators $T$ and $S$ in $\mathcal{B}_{n}(\Omega)$ are unitarily equivalent if and only if their complex bundles $\mathcal{E}_{T}$ and $\mathcal{E}_{S}$ are equivalent ($\mathcal{E}_{S}\sim_{u}\mathcal{E}_{T}$) as Hermitian holomorphic vector bundles, that is, there exists a unitary operator $U\in\mathcal{L}(\mathcal{H})$ such that $\mathcal{E}_{S}(w)=U(\mathcal{E}_{T}(w))$ for all $w\in\Omega$. They also proved that the curvature and its covariant derivatives form a complete set of unitary invariants. However, the similarity classification problem is much more complex and has not yet been fully resolved.

For operators $T$ and $S$ in $\mathcal{B}_{1}(\mathbb{D}),$ each having $\overline{\mathbb{D}}$ as a $K$-spectral set, the Cowen-Douglas conjecture (\cite{CD1}, $\S$4.35) states that $T$ is  similar to $S$ if and only if $\lim\limits_{\vert w\vert\rightarrow 1}\mathcal{K}_{\mathcal{E}_T}(w)/\mathcal{K}_{\mathcal{E}_S}(w)=1$.
Clark and Misra gave a positive result in \cite{CM2}, confirming that the conjecture is sufficient for the class of Toeplitz operators considered in \cite{CM1978}. Subsequently, they in \cite{CM} introduced a weighted backward shift operator to show that the conjecture is not necessary, and found that the ratio of the metrics associated with Hermitian holomorphic line bundles is a better similarity invariant than the ratio of the curvatures, the main point of the note. In addition, Clark and Misra showed that the backward shift operator $T$, with $\Vert T\Vert\leq 1$, is similar to the backward shift operator $S_{\alpha}$ with weight sequence $\left\{[(n+1)/(n+2)]^{\alpha/2}\right\}_{n\geq0}$
if and only if the ratio $a_{w}:=\frac{h_{\mathcal{E}_{T}}(w)}{h_{\mathcal{E}_{S_{\alpha}}}(w)}$ of the metric is bounded and bounded away from $0$ (\cite{CM}, Theorem 1). In the third section of this paper, we will use the ratio of the metrics and the operator realization of holomorphic vector bundles to provide necessary and sufficient conditions for the similarity of the backward shift operators on the Hardy and weighted Bergman spaces.

There have been some new developments in the past four decades on the similarity problem for the weighted backward shift operators on the spaces $\mathcal{H}_k$ \cite{CS2,JJKX,JJDG,JX2022,CK,CL,LM2023,M,ZKH}. Uchiyama used a holomorphic frame of Hermitian holomorphic vector bundles to show the similarity between a contraction and the backward shift operator on the Hardy space \cite{UM1990}.
Douglas, Treil and the third author gave a necessary and sufficient condition for an $n$-hypercontraction to be similar to the backward shift operator on weighted Bergman spaces by using the trace of the curvature \cite{DKT,KT}. Sarkar and the first author described the similarity of backward shift operators on weighted Bergman spaces using Hilbert modules \cite{JS2019}.
The third author gave a necessary condition for the similarity of the backward shift on the Dirichlet space \cite{K2016}.
Note that the original Cowen-Douglas conjecture does not include the backward shift operator in the Dirichlet space and operators similar to it, see Remark 2 in \cite{CM}. In this paper, we introduce the holomorphic jet bundle in the description, which is closely related to holomorphic vector bundles, to consider the similarity of the backward shift operators on weighted Dirichlet spaces.

In brief, there have been three different approaches to dealing with the similarity problem of operators. The first is a geometric approach used in \cite{DKT,HJK,K2016,KT} which gives an explicit formulation of similarity in terms of the trace of the curvature. The second approach is algebraic and it describes the similarity of the direct sum of strongly irreducible operators \cite{CFJ,J,JGJ}.
The third approach is due to Shields in \cite{SH} in which a necessary and sufficient condition for the similarity of weighted shift operators is presented using weight sequences. 

In \cite{QL}, Lin considered the operator realization problem of when a function $f\in H^{\infty}(\mathbb{D})$ gives rise to a Cowen-Douglas operator and proved that the tensor product $\mathcal{E}_{T}\otimes\mathcal{E}_{S}$ from operators $T\in\mathcal{B}_{m}(\mathbb{D})$ and $S\in\mathcal{B}_{n}(\mathbb{D})$ is indeed a Hermitian holomorphic eigenvector bundle over $\mathbb{D}$ of some Cowen-Douglas operator in $\mathcal{B}_{nm}(\mathbb{D})$.  We are interested in using geometric entities of the Hermitian holomorphic eigenvector bundle associated with a Cowen-Douglas operator to characterize similarity. In particular, we will use the metric of the Hermitian holomorphic eigenvector bundle and the associated holomorphic
jet bundles to give a similarity criteria.

The paper is organized as follows: in Section \ref{sec1}, we introduce some necessary notations and present preliminary material.
Section \ref{sec2} deals with some extension results, Theorem \ref{t1} and Proposition \ref{3.6}, which are given in terms of the tensor product of holomorphic vector bundles and the holomorphic vector bundles induced by the defect operator corresponding to an $n$-hypercontraction. Section \ref{sec3} includes two sufficient conditions for the operator $T \in \mathcal{L}(\mathcal{H})$ to be similar to the backward shift operators $D^{*}_{\alpha}$ on weighted Dirichlet spaces $\mathcal{D}_{\alpha}$, namely Theorems \ref{thm1} and \ref{thm2}; the techniques used for these results involve the jet bundles of holomorphic complex bundles and properties of multiplier algebras of weighted Dirichlet spaces.

\section{Preliminaries}\label{sec1}
\subsection{Multipliers}
Given a reproducing kernel Hilbert space $\mathcal{H}_{K}$ of holomorphic functions on $\mathbb{D}$ with reproducing kernel $K$, \emph{the multiplier algebra} of $\mathcal{H}_{K}$ is the set of $\phi \in \text{Hol}(\mathbb{D})$ defined as
$$\text{Mult}(\mathcal{H}_{K})=\{\phi\in \text{Hol}(\mathbb{D}): \phi f\in\mathcal{H}_{K}~\text{whenever}~f\in\mathcal{H}_{K}\}.$$
It follows from the closed graph theorem that if $\phi$ is a multiplier of $\mathcal{H}_{K}$, then $M_{\phi}$, the multiplication operator by $\phi$, is a bounded linear operator on $\mathcal{H}_{K}$ and
$$\Vert\phi \Vert_{\text{Mult}(\mathcal{H}_{K})}=\Vert M_{\phi}\Vert=\sup\limits_{\Vert f\Vert_{\mathcal{H}_{K}}=1}\Vert M_{\phi}f\Vert_{\mathcal{H}_{K}}=\sup\limits_{\Vert f\Vert_{\mathcal{H}_{K}}=1}\Vert \phi f\Vert_{\mathcal{H}_{K}}.$$
Since
$\langle f, M_{\phi}^{*}K(\cdot,w)\rangle_{\mathcal{H}_{K}}=\langle M_{\phi}f, K(\cdot,w)\rangle_{\mathcal{H}_{K}}=\phi(w)f(w)=\langle f, \overline{\phi(w)}K(\cdot,w)\rangle_{\mathcal{H}_{K}}$ for any $f\in\mathcal{H}_{K},$
$$M_{\phi}^{*}K(\cdot,w)=\overline{\phi(w)}K(\cdot,w),\quad w\in\mathbb{D},$$
and $\Vert\phi\Vert_{\infty}\leq\Vert M_{\phi}\Vert=\Vert\phi \Vert_{\text{Mult}(\mathcal{H}_{K})}.$

For the Hardy space $H^{2}(\mathbb{D})$ and the Bergman space $L^{2}(\mathbb{D})$,
$$\text{Mult}(H^{2}(\mathbb{D}))=\text{Mult}(L^{2}(\mathbb{D}))=H^{\infty}(\mathbb{D}).$$
For the multiplier algebra of the weighted Dirichlet space $\mathcal{D}_{\alpha}$ for $\alpha\in(0,1]$, it is well-known that $\text{Mult}(\mathcal{D}_{\alpha})\subsetneqq H^{\infty}(\mathbb{D})$. In fact, it is even proven in \cite{EKMR,DS1980} that $\text{Mult}(\mathcal{D}_{\alpha})\subsetneqq H^{\infty}(\mathbb{D})\cap\mathcal{D}_{\alpha}$.

Given separable Hilbert spaces $E_{1}$ and $E_{2}$, the space $\mathcal{H}_{K}\otimes E_{1}$ can be seen as a space of holomorphic functions  $f:\mathbb{D}\rightarrow E_{1}$ with Taylor series
$f(z)=\sum\limits_{n=0}^{\infty}\hat{f}(n)z^{n}$, where $\{\hat{f}(n)\}_{n=0}^{\infty}\subset E_{1}$ and $\sum\limits_{n=0}^{\infty}\Vert \hat{f}(n)\Vert^{2}_{E_{1}}\Vert z^{n}\Vert^{2}_{\mathcal{H}_{K}}<\infty$. A multiplier from $\mathcal{H}_{K}\otimes E_{1}$ to $\mathcal{H}_{K}\otimes E_{2}$ is an operator-valued function
$\Phi:\Omega\rightarrow \mathcal{L}(E_{1},E_{2})$ such that for every $f\in\mathcal{H}_{K}\otimes E_{1}$, $\Phi f\in\mathcal{H}_{K}\otimes E_{2}$. The multiplication operator $M_{\Phi}$ is also  one such that given $f\in\mathcal{H}_{K}\otimes E_{1}$, $M_{\Phi} f\in\mathcal{H}_{K}\otimes E_{2}$. Then for every $f\in\mathcal{H}_{K}\otimes E_{1}$ and $g\in E_{2}$,
 \begin{eqnarray*}
\langle f, M_{\Phi}^{*}K(\cdot,w)\otimes g\rangle_{\mathcal{H}_{K}\otimes E_{1}}
&=&\langle M_{\Phi}f, K(\cdot,w)\otimes g\rangle_{\mathcal{H}_{K}\otimes E_{2}}\\
&=&\langle\Phi(w)f(w), g\rangle_{E_{2}}\\
&=&\langle f(w), \Phi(w)^{*}g\rangle_{E_{1}}\\
&=&\langle f, K(\cdot,w)\otimes\Phi(w)^{*}g\rangle_{\mathcal{H}_{K}\otimes E_{1}}.
\end{eqnarray*}

The multiplication operator has the following important property given in \cite{AM2002}:

\begin{theorem}\label{book}
Let $\Phi:\mathbb{D}\rightarrow\mathcal{L}(E_{1},E_{2})$ be an operator-valued function.
If $\Phi\in\text{Mult}(\mathcal{H}_{K}\otimes E_{1},\mathcal{H}_{K}\otimes E_{2})$, then
$$M_{\Phi}^{*}(K(\cdot,w)\otimes g)=K(\cdot,w)\otimes \Phi(w)^{*}g,\quad w\in\mathbb{D}, g\in E_{2}.$$
Conversely, if $\Phi:\mathbb{D}\rightarrow \mathcal{L}(E_{1},E_{2})$ and the mapping
$K(\cdot,w)\otimes g\mapsto K(\cdot,w)\otimes \Phi(w)^{*}g$ extends to a bounded operator $X\in\mathcal{L}(\mathcal{H}_{K}\otimes E_{2},\mathcal{H}_{K}\otimes E_{1})$,
then $\Phi\in\text{Mult}(\mathcal{H}_{K}\otimes E_{1},\mathcal{H}_{K}\otimes E_{2})$ and $X=M_{\Phi}^{*}.$
\end{theorem}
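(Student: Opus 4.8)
The plan is to prove both directions of the equivalence between $\Phi$ being a multiplier and the adjoint-action formula defining a bounded operator. For the forward direction, assume $\Phi\in\text{Mult}(\mathcal{H}_K\otimes E_1,\mathcal{H}_K\otimes E_2)$. By the closed graph theorem, $M_\Phi$ is bounded, so $M_\Phi^*$ is a bounded operator from $\mathcal{H}_K\otimes E_2$ to $\mathcal{H}_K\otimes E_1$. The formula $M_\Phi^*(K(\cdot,w)\otimes g)=K(\cdot,w)\otimes\Phi(w)^*g$ is then read off exactly from the displayed chain of inner-product identities immediately preceding the theorem statement: for every $f\in\mathcal{H}_K\otimes E_1$ and $g\in E_2$, one has $\langle f, M_\Phi^*(K(\cdot,w)\otimes g)\rangle = \langle f, K(\cdot,w)\otimes\Phi(w)^*g\rangle$, and since the kernel vectors $K(\cdot,w)\otimes g$ need not span but $f$ ranges over all of $\mathcal{H}_K\otimes E_1$, the two vectors in the right slot must coincide.

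For the converse, suppose $\Phi:\mathbb{D}\to\mathcal{L}(E_1,E_2)$ and that $K(\cdot,w)\otimes g\mapsto K(\cdot,w)\otimes\Phi(w)^*g$ extends to a bounded operator $X\in\mathcal{L}(\mathcal{H}_K\otimes E_2,\mathcal{H}_K\otimes E_1)$. First I would check that $X^*$ is the candidate for $M_\Phi$. Take $f\in\mathcal{H}_K\otimes E_1$; I want to identify the function $X^*f$. For any $w\in\mathbb{D}$ and $g\in E_2$, compute $\langle (X^*f)(w), g\rangle_{E_2} = \langle X^*f, K(\cdot,w)\otimes g\rangle_{\mathcal{H}_K\otimes E_2} = \langle f, X(K(\cdot,w)\otimes g)\rangle_{\mathcal{H}_K\otimes E_1} = \langle f, K(\cdot,w)\otimes\Phi(w)^*g\rangle_{\mathcal{H}_K\otimes E_1} = \langle f(w),\Phi(w)^*g\rangle_{E_1} = \langle \Phi(w)f(w), g\rangle_{E_2}$. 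Since this holds for all $g\in E_2$, we get $(X^*f)(w)=\Phi(w)f(w)$ for every $w$, i.e. $X^*f=\Phi f$ as an $E_2$-valued function; in particular $\Phi f\in\mathcal{H}_K\otimes E_2$. As $f$ was arbitrary, $\Phi\in\text{Mult}(\mathcal{H}_K\otimes E_1,\mathcal{H}_K\otimes E_2)$ and $M_\Phi=X^*$, hence $M_\Phi^*=X$.

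The computations are essentially routine manipulations of reproducing-kernel inner products, so there is no single hard step; the one point requiring a little care is the justification in the forward direction that the adjoint formula is determined on the span of the kernel vectors — here it is cleaner to phrase it as: the identity $\langle f, M_\Phi^*(K(\cdot,w)\otimes g)\rangle=\langle f, K(\cdot,w)\otimes\Phi(w)^*g\rangle$ holding for \emph{all} $f$ forces equality of the two vectors, rather than relying on density. A second minor point is to make sure that the pointwise evaluation $f\mapsto f(w)$ used repeatedly is exactly the reproducing property of $K(\cdot,w)\otimes g$, which is why $\langle f, K(\cdot,w)\otimes g\rangle = \langle f(w), g\rangle$; this is built into the description of $\mathcal{H}_K\otimes E_1$ given just above the statement.
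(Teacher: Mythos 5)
Your proposal is correct, and the forward direction is exactly the chain of reproducing-kernel inner-product identities the paper displays immediately before the theorem statement; the converse is the standard adjoint argument (showing $(X^*f)(w)=\Phi(w)f(w)$ for all $w$) that the paper leaves to the cited reference \cite{AM2002}. No gaps.
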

It is clear now that any multiplier $\Phi$ is bounded and holomorphic, that is, $\Phi\in H^{\infty}_{E_{1}\rightarrow E_{2}}(\mathbb{D})$, where
$H^{\infty}_{E_{1}\rightarrow E_{2}}(\mathbb{D})$ is the space of bounded analytic functions defined on
$\mathbb{D}$ whose function values are bounded linear operators from a Hilbert space $E_{1}$ to another one $E_{2}$. When $\text{Mult}(\mathcal{H}_{K})=H^{\infty}(\mathbb{D})$, then $\text{Mult}(\mathcal{H}_{K}\otimes E_{1},\mathcal{H}_{K}\otimes E_{2})=H^{\infty}_{E_1 \rightarrow E_2}(\mathbb{D})$.

Since a bounded linear operator on the Hardy space $H^2(\mathbb{D})$ commuting with the operator of multiplication by $z$ is given by the multiplication operator by a function $\varphi\in H^{\infty}(\mathbb{D})$ with norm $\Vert\varphi\Vert_{\infty},$
the corona problem is closely related to the multipliers on reproducing kernel Hilbert spaces of analytic functions (see \cite{BTV1997,XJ1998}).
The original corona problem posed by Kakutani in 1941 asks whether the unit disk $\mathbb{D}$ is dense in the maximal ideal space of $H^{\infty}(\mathbb{D})$.
The question was later answered in the affirmative by Carleson in \cite{CL1962}.
In \cite{LH1967}, H\"{o}rmander introduced a new and a much simpler method for solving the problem which was popularized by Wolff (see \cite{GJ1985,N1986}). There have been many versions of the operator corona theorem given by Fuhrmann, Rosenblum, Tolokonnikov, Uchiyama, and Vasyunin \cite{PAF1968,MR1980,VAT1981,UM1980}. In particular, we have the following result by Vasyunin \cite{VAT1981}:
\begin{theorem}\label{Fuhrmann}
For an operator-valued function $F \in H^{\infty}_{E_1 \rightarrow E_2}(\mathbb{D})$ with $\dim E_1< \infty$ and $\dim E_2=\infty$ such that
$$F^*(w)F(w) \geq \delta, \quad w \in \mathbb{D},$$
for some $\delta>0$, there exists a function $G \in H^{\infty}_{E_2 \rightarrow E_1}(\mathbb{D})$ satisfying
$$G(w)F(w) \equiv I_{E_1},\quad w \in \mathbb{D}.$$
\end{theorem}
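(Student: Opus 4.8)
The plan is to prove this by the $\bar\partial$-method of Carleson--H\"ormander--Wolff, transported to the operator-valued setting; the only place where $\dim E_1<\infty$ is genuinely needed is a Carleson-measure estimate. First, since $F^*(w)F(w)\geq\delta I_{E_1}$, the operator $F^*(w)F(w)$ is invertible on $E_1$ with $\Vert(F^*(w)F(w))^{-1}\Vert\leq\delta^{-1}$, so
\[
G_0(w):=\bigl(F^*(w)F(w)\bigr)^{-1}F^*(w)\in\mathcal{L}(E_2,E_1)
\]
is a bounded $C^\infty$ function of $w$ with $G_0(w)F(w)\equiv I_{E_1}$ and $\Vert G_0\Vert_\infty\leq\delta^{-1}\Vert F\Vert_\infty$; it fails to be holomorphic only because $F^*$ is anti-holomorphic. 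Writing $P_F(w)=F(w)(F^*(w)F(w))^{-1}F^*(w)$ for the orthogonal projection of $E_2$ onto the $(\dim E_1)$-dimensional range of $F(w)$, a direct computation using $\bar\partial F=0$ and $\bar\partial(F^*)=(F')^*$ gives $\bar\partial G_0=(F^*F)^{-1}(F')^*(I_{E_2}-P_F)$, which in particular equals $(\bar\partial G_0)(I_{E_2}-P_F)$.

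The key estimate is that $\Vert\bar\partial G_0(w)\Vert^2(1-\vert w\vert^2)\,dA(w)$ is a Carleson measure on $\mathbb{D}$, with Carleson norm $\lesssim\delta^{-2}(\dim E_1)\Vert F\Vert_\infty^2$. Indeed $\Vert\bar\partial G_0(w)\Vert\leq\delta^{-1}\Vert F'(w)\Vert$, and fixing an orthonormal basis $e_1,\dots,e_n$ of $E_1$ one has $\Vert F'(w)\Vert^2\leq\sum_{j=1}^n\Vert F'(w)e_j\Vert_{E_2}^2$; for each $j$ the $E_2$-valued function $f_j:w\mapsto F(w)e_j$ is holomorphic and bounded by $\Vert F\Vert_\infty$, so $\Vert f_j(w)\Vert_{E_2}^2$ is subharmonic with Laplacian $4\Vert f_j'(w)\Vert_{E_2}^2$, and a standard application of Green's theorem over Carleson boxes forces $\Vert f_j'(w)\Vert_{E_2}^2(1-\vert w\vert^2)\,dA(w)$ to be a Carleson measure with norm $\lesssim\Vert F\Vert_\infty^2$. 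Summing over the \emph{finitely many} indices $j$ --- the only, and an essential, use of $\dim E_1<\infty$ --- gives the claim.

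Next I would look for $G=G_0-\Gamma$ with $\Gamma$ a bounded $\mathcal{L}(E_2,E_1)$-valued function satisfying $\Gamma(w)F(w)\equiv 0$ and $\bar\partial\Gamma=\bar\partial G_0$: then $GF\equiv I_{E_1}$ and $\bar\partial G\equiv 0$, so $G$ is bounded and holomorphic, i.e. $G\in H^\infty_{E_2\to E_1}(\mathbb{D})$, as required. The constraint $\Gamma F\equiv 0$ says $\Gamma=\Gamma(I_{E_2}-P_F)$, which is consistent with the factorization of $\bar\partial G_0$ found above. One writes $\Gamma=G_0\Lambda$ for an $\mathcal{L}(E_2)$-valued $\Lambda$ with $\Lambda(w)F(w)\equiv 0$, which reduces the requirement on $\Gamma$ to the $\bar\partial$-equation $\bar\partial\Lambda=\Theta(I_{E_2}-\Lambda)$ with datum $\Theta:=F\,\bar\partial G_0$ (note $\Theta F\equiv 0$, since $(\bar\partial G_0)F=\bar\partial(G_0F)-G_0\bar\partial F=0$). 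By the Hilbert-space-valued form of Wolff's $\bar\partial$-lemma --- a $\bar\partial$-equation whose datum generates a Carleson measure has a bounded $L^\infty$ solution of comparable size --- together with the estimate above (and the analogous Carleson bound for $\Vert\bar\partial P_F(w)\Vert^2(1-\vert w\vert^2)\,dA(w)$, proved the same way, which controls the error committed by compressing solutions to $\operatorname{ran}(I_{E_2}-P_F)$), one solves this by a convergent iteration, obtaining the desired $\Lambda$ and hence $\Gamma$.

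The hard part is this last step: running the $\bar\partial$-correction so that the output is simultaneously bounded \emph{and} still a left inverse of $F$, which demands the operator Wolff lemma and careful control of the constants in the iteration (after, if necessary, a reduction to small Carleson norm). It is exactly here that $\dim E_1<\infty$ is indispensable: for $\dim E_1=\infty$ the controlling Carleson norm requires summing infinitely many Littlewood--Paley pieces and the conclusion genuinely fails (a counterexample of Treil). I note an alternative organization that bypasses the matrix bookkeeping by reducing to $\dim E_1=1$: since $\det(F^*F)=\Vert f_1\wedge\cdots\wedge f_n\Vert_{\Lambda^{n}E_2}^2$, the section $w\mapsto f_1(w)\wedge\cdots\wedge f_n(w)$ lies in $H^\infty_{\mathbb{C}\to\Lambda^{n}E_2}(\mathbb{D})$ with norm at least $\delta^{n/2}$; applying the case $\dim E_1=1$ to it and feeding the result into a Cramer-type formula in exterior algebra recovers $G$ with $GF\equiv I_{E_1}$.
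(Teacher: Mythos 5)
First, a point of comparison: the paper does not prove this statement. Theorem \ref{Fuhrmann} is quoted as a known operator corona theorem of Vasyunin (in essence also Fuhrmann and Tolokonnikov), so there is no in-paper argument to measure yours against; your proposal has to stand on its own.

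As a proof, your sketch follows the standard H\"ormander--Wolff $\bar\partial$-scheme and gets the first two steps right: the smooth left inverse $G_0=(F^*F)^{-1}F^*$, the identity $\bar\partial G_0=(F^*F)^{-1}(F')^*(I_{E_2}-P_F)$, and the Carleson-measure estimate for $\Vert\bar\partial G_0(w)\Vert^2(1-\vert w\vert^2)\,dA(w)$, including the correct diagnosis that summing finitely many Littlewood--Paley measures is exactly where $\dim E_1<\infty$ enters (and that Treil's example rules out $\dim E_1=\infty$). The genuine gap is in the correction step, which you label ``the hard part'' and then do not carry out. Two things are missing. (i) The constraint $\Gamma F\equiv 0$ is not enforced by your scheme: with $\Gamma=G_0\Lambda$ and $\bar\partial\Lambda=\Theta(I_{E_2}-\Lambda)$ one only obtains $\bar\partial(\Lambda F)=-\Theta\Lambda F$ (using $\Theta F=F(\bar\partial G_0)F=0$), a homogeneous equation whose solutions need not vanish, so the resulting $G$ satisfies $GF=I_{E_1}-G_0\Lambda F$ with a possibly nonzero holomorphic remainder. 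Compressing $\Lambda$ to $\operatorname{ran}(I_{E_2}-P_F)$ to force $\Lambda F=0$ destroys the $\bar\partial$-identity and creates the error terms you allude to, and the assertion that the resulting iteration converges to a bounded solution is exactly the content that needs proof. In the scalar Koszul-complex argument this difficulty is resolved by an algebraic antisymmetrization that makes the correction annihilate $F$ identically; your scheme has no analogue of it. (ii) The ``Hilbert-space-valued form of Wolff's $\bar\partial$-lemma'' is invoked without statement or proof in the operator-valued setting. By contrast, the route you relegate to a closing remark---the exterior-algebra/adjugate reduction of $\dim E_1=n$ to the column case $\dim E_1=1$---is complete and short once the column corona theorem is granted, is essentially Fuhrmann's original argument, and is the version I would recommend writing up.
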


One can consider other multiplier algebras. For a weighted Dirichlet space $\mathcal{D}_{\alpha}$ for $\alpha\in(0,1]$, $\text{Mult}(\mathcal{D}_{\alpha})$ is a Banach algebra under the norm $\Vert\phi \Vert_{\text{Mult}(\mathcal{D}_{\alpha})}=\Vert M_{\phi} \Vert$ for $f\in\text{Mult}(\mathcal{D}_{\alpha})$. In the work \cite{TTT2013}, Kidane and Trent proved the corona theorem for $\text{Mult}(\mathcal{D}_{\alpha})$ for $\alpha\in(0,1]$. Let $\{f_{i}\}_{i=1}^{\infty}\subset\text{Mult}(\mathcal{D}_{\alpha})$ and $F=(f_{1},f_{2},\ldots)$. Define a row operator $M_{F}^{R}:\bigoplus\limits_{1}^{\infty}\mathcal{D}_{\alpha}\rightarrow\mathcal{D}_{\alpha}$
by
$$M_{F}^{R}((h_{1},h_{2},\ldots)^{T})=\sum\limits_{i=1}^{\infty}f_{i}h_{i},$$
where $(h_{1},h_{2},\ldots)^{T}\in\bigoplus\limits_{1}^{\infty}\mathcal{D}_{\alpha}$. Similarly, define a column operator
$M_{F}^{C}:\mathcal{D}_{\alpha}\rightarrow\bigoplus\limits_{1}^{\infty}\mathcal{D}_{\alpha}$
by
$$M_{F}^{C}(h)=(f_{1}h,f_{2}h,\ldots)^{T},\quad h\in\mathcal{D}_{\alpha}.$$
A result that we will later need concerning these row and column operators is as follows and is due to Kidane and Trent \cite{TTT2013}:
\begin{theorem}\label{rowcolumn}
Let $M_{F}^{C}\in \mathcal{L}\left(\mathcal{D}_{\alpha},\bigoplus\limits_{1}^{\infty}\mathcal{D}_{\alpha}\right).$
Then $M_{F}^{R}\in\mathcal{L}\left(\bigoplus\limits_{1}^{\infty}\mathcal{D}_{\alpha},\mathcal{D}_{\alpha}\right)$ and $\left\Vert M_{F}^{R}\right\Vert\leq \sqrt{10}\left\Vert M_{F}^{C}\right\Vert.$
\end{theorem}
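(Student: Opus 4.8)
The plan is to reduce the theorem to the single quantitative estimate
\[
\Bigl\|\sum_{i=1}^{N} f_i h_i\Bigr\|_{\mathcal{D}_\alpha}^{2}\;\le\;10\,\bigl\|M_F^C\bigr\|^{2}\sum_{i=1}^{N}\|h_i\|_{\mathcal{D}_\alpha}^{2}
\]
for every finitely supported tuple $(h_1,\dots,h_N)$. Once this is available, $M_F^R$ is well defined on the dense set of finitely supported tuples, extends to a bounded operator on $\bigoplus_{1}^{\infty}\mathcal{D}_\alpha$, and the asserted norm bound is immediate; checking that $\sum_i f_i h_i$ actually converges in $\mathcal{D}_\alpha$ for a general element of the direct sum is then routine.

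First I would extract from the column hypothesis a pointwise bound on the symbols. Writing $k_w$ for the reproducing kernel of $\mathcal{D}_\alpha$, and using $(M_F^C)^{*}(g_i)_i=\sum_i M_{f_i}^{*}g_i$ together with $M_{f_i}^{*}k_w=\overline{f_i(w)}\,k_w$, one tests $(M_F^C)^{*}$ on the kernel tuples $(c_i k_w)_i$ with $c=(c_i)\in\ell^{2}$: since $(M_F^C)^{*}(c_i k_w)_i=\bigl(\sum_i c_i\overline{f_i(w)}\bigr)k_w$, taking the supremum over $\|c\|_{\ell^2}\le 1$ yields
\[
\sum_{i}|f_i(w)|^{2}\le \bigl\|M_F^C\bigr\|^{2},\qquad w\in\mathbb D,
\]
and, by Fatou's lemma, the same bound for the radial boundary values a.e.\ on $\mathbb T$. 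I would also fix the equivalent Hardy-plus-weighted-area expression of the norm, $\|g\|_{\mathcal{D}_\alpha}^{2}=\|g\|_{H^{2}}^{2}+\int_{\mathbb D}|g'|^{2}\,d\lambda_\alpha$ with $d\lambda_\alpha(z)=c_\alpha(1-|z|^{2})^{1-\alpha}\,dA(z)$, so in particular $\int_{\mathbb D}|g'|^{2}\,d\lambda_\alpha\le\|g\|_{\mathcal{D}_\alpha}^{2}$ for every $g$.

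The core step is the Carleson-type inequality for the measure $\nu:=\bigl(\sum_i|f_i'|^{2}\bigr)\,d\lambda_\alpha$, namely
\[
\int_{\mathbb D}|h|^{2}\,d\nu\;\le\;4\,\bigl\|M_F^C\bigr\|^{2}\,\|h\|_{\mathcal{D}_\alpha}^{2},\qquad h\in\mathcal{D}_\alpha .
\]
I would prove it from the Leibniz identity $f_i'h=(f_ih)'-f_ih'$: estimating $|f_i'h|^{2}\le 2|(f_ih)'|^{2}+2|f_ih'|^{2}$, integrating against $d\lambda_\alpha$ and summing on $i$, the first piece is $\le 2\sum_i\|f_ih\|_{\mathcal{D}_\alpha}^{2}\le 2\|M_F^C\|^{2}\|h\|_{\mathcal{D}_\alpha}^{2}$ by the column hypothesis, and the second piece is $2\int_{\mathbb D}\bigl(\sum_i|f_i|^{2}\bigr)|h'|^{2}\,d\lambda_\alpha\le 2\|M_F^C\|^{2}\int|h'|^{2}d\lambda_\alpha\le 2\|M_F^C\|^{2}\|h\|_{\mathcal{D}_\alpha}^{2}$ by the pointwise bound above.

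Finally I would assemble, with $g=\sum_{i=1}^{N}f_ih_i$ and $H:=\sum_i\|h_i\|_{\mathcal{D}_\alpha}^{2}$. Cauchy--Schwarz in $i$ on the circle plus the boundary form of the pointwise bound give $\|g\|_{H^{2}}^{2}\le\int_{\mathbb T}\bigl(\sum_i|f_i|^{2}\bigr)\bigl(\sum_i|h_i|^{2}\bigr)\le\|M_F^C\|^{2}H$. For the area part, $g'=\sum_i f_i'h_i+\sum_i f_ih_i'$, so $\int|g'|^{2}d\lambda_\alpha\le\tfrac32\int|\sum_i f_i'h_i|^{2}d\lambda_\alpha+3\int|\sum_i f_ih_i'|^{2}d\lambda_\alpha$; Cauchy--Schwarz in $i$ pointwise then bounds the first term by $\tfrac32\sum_i\int|h_i|^{2}d\nu\le 6\|M_F^C\|^{2}H$ (by the Carleson inequality) and the second by $3\|M_F^C\|^{2}\sum_i\int|h_i'|^{2}d\lambda_\alpha\le 3\|M_F^C\|^{2}H$. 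Adding the three contributions gives $\|g\|_{\mathcal{D}_\alpha}^{2}\le(1+6+3)\|M_F^C\|^{2}H=10\|M_F^C\|^{2}H$, which is exactly the required bound. The main obstacle is the Carleson-measure estimate for $\nu$ with a constant uniform in $\alpha\in(0,1]$ — this is where the multiplier structure interacts with the weight $(1-|z|^{2})^{1-\alpha}$ — together with the bookkeeping (the choice of the splitting constants $\tfrac32$ and $3$, and the normalization in the equivalent-norm identity) that makes the accumulated constants add up to precisely $10$ rather than something larger.
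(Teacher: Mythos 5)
The paper does not actually prove this statement: it is quoted from Kidane and Trent \cite{TTT2013}, so there is no internal proof to compare against. Your outline is the standard route (and, as far as I can tell, essentially the route of the cited source): the pointwise bound $\sum_i|f_i(w)|^2\le\|M_F^C\|^2$ by testing $(M_F^C)^*$ on reproducing-kernel tuples, the Carleson-type embedding for $\bigl(\sum_i|f_i'|^2\bigr)(1-|z|^2)^{1-\alpha}\,dA$ obtained from the Leibniz rule together with the column hypothesis, and the Cauchy--Schwarz assembly of the Hardy part and the area part. Those three steps are correct and already give boundedness of $M_F^R$ with a universal constant, which is all that is used later (in the proof of Theorem \ref{thm2} only the qualitative boundedness of $M_G^R$ is invoked).

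The one genuine soft spot is the asserted \emph{identity} $\|g\|_{\mathcal{D}_\alpha}^2=\|g\|_{H^2}^2+\int_{\mathbb D}|g'|^2\,d\lambda_\alpha$. With the coefficient norm $\sum_n(n+1)^\alpha|\hat g(n)|^2$ used in this paper, this is an exact identity only for $\alpha=1$ (where $\sum_n(n+1)|a_n|^2=\sum_n|a_n|^2+\tfrac1\pi\int_{\mathbb D}|g'|^2\,dA$), and in that case your bookkeeping does produce exactly $\sqrt{10}$. For $\alpha\in(0,1)$ the coefficient of $|a_n|^2$ in $\int|g'|^2\,d\lambda_\alpha$ is $c_\alpha\pi\,n^2B(n,2-\alpha)\sim c_\alpha\pi\,\Gamma(2-\alpha)\,n^\alpha$ with $\Gamma(2-\alpha)<1$, so no choice of $c_\alpha$ makes the expression an identity; what you actually need are the three one-sided inequalities $\|h\|_{H^2}^2\le\|h\|_{\mathcal{D}_\alpha}^2$, $\int|h'|^2\,d\lambda_\alpha\le\|h\|_{\mathcal{D}_\alpha}^2$, and $\|g\|_{\mathcal{D}_\alpha}^2\le\|g\|_{H^2}^2+\int|g'|^2\,d\lambda_\alpha$, all with constant $1$. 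These do hold simultaneously, but only for the specific normalization $c_\alpha=\frac{1}{\pi\Gamma(2-\alpha)}$, and verifying them amounts to Gautschi-type two-sided bounds on $\Gamma(n+1)/\Gamma(n+1-\alpha)$ --- a concrete lemma that is missing from your write-up. As written, the argument proves $\|M_F^R\|\le C\|M_F^C\|$ for a universal $C$ but does not yet pin down $\sqrt{10}$ uniformly in $\alpha\in(0,1]$; you flagged this yourself, and it is the only real gap.
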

The following corona theorem for $\text{Mult}(\mathcal{D}_{\alpha})$ is also proven in \cite{TTT2013}:

\begin{theorem}\label{tavan}
Let $\{f_{i}\}_{i=1}^{\infty}\subset\text{Mult}(\mathcal{D}_{\alpha})$.
Assume that $\left\Vert M_{F}^{C}\right\Vert\leq 1$ and $0<\delta^{2}\leq\sum\limits_{i=1}^{\infty}|f_{i}(w)|^{2}$ for all $w\in\mathbb{D}$.
Then there exists a positive number $C_{\alpha}<\infty$ and $\{g_{i}\}_{i=1}^{\infty}\subset\text{Mult}(\mathcal{D}_{\alpha})$ such that
$$\sum\limits_{i=1}^{\infty}f_{i}g_{i}=1\quad\text{and}\quad \left\Vert M_{G}^{C}\right\Vert\leq\frac{C_{\alpha}}{\delta^{4}}.$$
\end{theorem}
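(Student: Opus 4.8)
The plan is to adapt the H\"{o}rmander--Wolff $\bar\partial$-approach to the classical corona theorem to the weighted Dirichlet setting, in the spirit of Trent's proof for $\text{Mult}(\mathcal{D}_1)$. The first step is to reduce the infinite corona problem to a single operator-valued one: viewing $F=(f_1,f_2,\dots)$ as an $\ell^2$-valued function, the hypothesis $\Vert M_F^C\Vert\le 1$ together with Theorem \ref{rowcolumn} yields $\Vert M_F^R\Vert\le\sqrt{10}$; since the row-multiplier norm dominates $\sup_{w}\Vert F(w)\Vert_{\ell^2}$, we obtain $\delta^2\le\sum_i|f_i(w)|^2\le 10$ for every $w\in\mathbb{D}$. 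Thus $F\in H^{\infty}_{\ell^2\to\mathbb{C}}(\mathbb{D})$ is bounded above and bounded below, and the task becomes: construct an $\ell^2$-valued multiplier $G=(g_1,g_2,\dots)$ of $\mathcal{D}_\alpha$ with $FG\equiv 1$ and $\Vert M_G^C\Vert\le C_\alpha\delta^{-4}$.

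Next I would set up the Koszul correction. Put $\phi_i^0:=\overline{f_i}/|F|^2$, where $|F|^2:=\sum_j|f_j|^2$; these are bounded by $\delta^{-1}$ and satisfy $\sum_i f_i\phi_i^0\equiv 1$, but they are not holomorphic. Look for the solution in the form
\[
g_i=\phi_i^0+\sum_j b_{ij}f_j,\qquad b_{ij}=-b_{ji},
\]
so that $\sum_i f_i g_i=1+\sum_{i,j}b_{ij}f_if_j=1$ automatically, the double sum vanishing by antisymmetry. If the antisymmetric $(0,1)$-forms $\omega_{ij}:=\phi_i^0\,\bar\partial\phi_j^0-\phi_j^0\,\bar\partial\phi_i^0$ are solved by $\bar\partial b_{ij}=\omega_{ij}$ with a fixed linear solution operator (which preserves antisymmetry), then a one-line computation using $\sum_j f_j\phi_j^0\equiv 1$ and $\bar\partial f_j=0$ gives $\sum_j(\bar\partial b_{ij})f_j=-\bar\partial\phi_i^0$, hence $\bar\partial g_i=0$: each $g_i$ is holomorphic. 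What remains is to choose the solution $b_{ij}$ so that the $g_i$ actually lie in $\text{Mult}(\mathcal{D}_\alpha)$ with the asserted bound.

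For the estimates, I would solve $\bar\partial b_{ij}=\omega_{ij}$ by a (suitably localized) Cauchy transform and use the Carleson-measure description of $\text{Mult}(\mathcal{D}_\alpha)$: a function $\phi\in H^\infty(\mathbb{D})$ is a multiplier of $\mathcal{D}_\alpha$ exactly when $|\phi'(z)|^2\,\omega_\alpha(z)\,dA(z)$ is a Carleson measure for $\mathcal{D}_\alpha$, where $\omega_\alpha(z)=(1-|z|^2)^{1-\alpha}$ (so $\omega_1\equiv 1$), together with its $\ell^2$-aggregated version for operator-valued multipliers. From $\Vert M_F^R\Vert\le\sqrt{10}$ one extracts that $\big(\sum_j|f_j'(z)|^2\big)\,\omega_\alpha(z)\,dA(z)$ is a $\mathcal{D}_\alpha$-Carleson measure with constant $\lesssim 10$. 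Differentiating $\overline{f_i}/|F|^2$ and using Cauchy--Schwarz bounds $|\partial\phi_i^0|$ and $|\bar\partial\phi_i^0|$ pointwise by $\delta^{-2}\big(\sum_j|f_j'|^2\big)^{1/2}$, so each division by $|F|^2$ costs a factor $\delta^{-2}$ and the aggregates $\sum_i|\bar\partial\phi_i^0|^2$ and $\sum_i|\partial\phi_i^0|^2$ are again $\mathcal{D}_\alpha$-Carleson with constant $\lesssim\delta^{-4}$. Granting the inverse-$\bar\partial$ estimate below, one gets $\Vert g_i\Vert_\infty\lesssim\delta^{-1}$, the aggregate $\big(\sum_i|g_i'(z)|^2\big)\,\omega_\alpha(z)\,dA(z)$ is $\mathcal{D}_\alpha$-Carleson with constant $\lesssim\delta^{-8}$, and expanding $\Vert g_ih\Vert_{\mathcal{D}_\alpha}^2\lesssim|g_i(0)h(0)|^2+\int_{\mathbb{D}}|g_i'|^2|h|^2\omega_\alpha\,dA+\Vert g_i\Vert_\infty^2\int_{\mathbb{D}}|h'|^2\omega_\alpha\,dA$ and summing over $i$ gives $\Vert M_G^C\Vert\le C_\alpha\delta^{-4}$.

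The hard part is the inverse-$\bar\partial$ estimate: the Cauchy-transform solution operator for $\bar\partial$ must send a $(0,1)$-form whose coefficient has $\mathcal{D}_\alpha$-Carleson-controlled square to a \emph{bounded} function whose $\partial$-derivative again has $\mathcal{D}_\alpha$-Carleson-controlled square, with constant depending only on $\alpha$. This is the precise analogue of the difficult step in Carleson's corona theorem (Wolff's lemma in the Hardy case), and it is here that the geometry of the weight $\omega_\alpha$ and of $\mathcal{D}_\alpha$-Carleson measures for $\alpha\in(0,1]$ enters; the passage from $\alpha=1$ to $\alpha\in(0,1)$ is nontrivial precisely at this point. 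A secondary, bookkeeping difficulty is keeping the $\ell^2$-summability of all the aggregates under control throughout the construction, which is what makes Theorem \ref{rowcolumn} (and the two-sided pointwise bound $\delta^2\le\sum_i|f_i|^2\le 10$ it produces) indispensable from the outset. We note finally that, although the reproducing kernel of $\mathcal{D}_\alpha$ is a complete Nevanlinna--Pick kernel for $\alpha\in(0,1]$, so that the Ball--Trent--Vinnikov Toeplitz corona theorem reduces the existence of a bounded multiplier solution to the operator inequality $M_F^R(M_F^R)^*\ge\delta^2 I$, verifying that inequality from the pointwise lower bound is itself essentially the corona theorem and does not bypass the $\bar\partial$ and Carleson-measure analysis above.
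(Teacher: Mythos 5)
First, a point of reference: the paper does not prove this statement at all --- Theorem \ref{tavan} is imported verbatim from Kidane and Trent \cite{TTT2013} (as is Theorem \ref{rowcolumn}), so there is no internal proof to compare yours against; your attempt has to be judged as a proof of the corona theorem for $\text{Mult}(\mathcal{D}_{\alpha})$ itself. The parts of your outline that are actually carried out are correct: the pointwise upper bound $\sum_i|f_i(w)|^2\le 10$ from $\Vert M_F^R\Vert\le\sqrt{10}$, the Koszul/antisymmetric correction scheme with $\phi_i^0=\overline{f_i}/|F|^2$ and $g_i=\phi_i^0+\sum_j b_{ij}f_j$, and the verification that $\bar\partial b_{ij}=\omega_{ij}$ forces $\bar\partial g_i=0$ are all standard and sound, and the Carleson-measure characterization of $\text{Mult}(\mathcal{D}_{\alpha})$ you invoke is the right framework for the estimates.

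The genuine gap is the one you yourself flag and then do not close: the inverse-$\bar\partial$ estimate. You need a solution operator for $\bar\partial b=\omega$ on $\mathbb{D}$ that (i) produces a \emph{bounded} $b$, (ii) produces a $b$ whose holomorphic derivative has $\ell^2$-aggregated square controlled as a $\mathcal{D}_{\alpha}$-Carleson measure with respect to the weight $(1-|z|^2)^{1-\alpha}$, and (iii) does so with constants depending only on $\alpha$ and surviving the infinite summation over the antisymmetric indices $(i,j)$. This is not a technicality to be ``granted'': it is the entire content of the theorem, the analogue of Wolff's lemma, and the place where $\mathcal{D}_{\alpha}$-Carleson measures (which for $\alpha\in(0,1]$ are characterized by capacitary conditions, not by the box condition) make the problem genuinely harder than the $H^{\infty}$ case. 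Without a proof of this lemma the argument establishes nothing beyond the smooth, non-holomorphic solution $\phi_i^0$. It is also worth noting that the source \cite{TTT2013} does not argue this way: since $K_{\alpha}$ is a complete Nevanlinna--Pick kernel for $\alpha\in(0,1]$, Kidane and Trent work through the Toeplitz corona theorem and obtain the lower bound on $M_F^R(M_F^R)^*$ by direct reproducing-kernel estimates --- precisely the route your closing paragraph dismisses as circular. It is not circular; it is how the clean power $\delta^{-4}$ in the statement arises, whereas $\bar\partial$-based proofs typically produce extra logarithmic losses in $\delta$. So as written the proposal is a correct strategy outline with its central lemma missing, and the dismissal of the alternative (and actually used) route is unwarranted.
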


\subsection{Model theorem}
The original model theorem states that a contraction $T \in \mathcal{L}(\mathcal{H})$ with $\lim\limits_{j\rightarrow \infty} \Vert T^j h\Vert=0$ for all
$h \in \mathcal{H}$ is unitarily equivalent to the backward shift operator restricted to some co-invariant subspace of a vector-valued Hardy space \cite{CF1963,N1986,GCR1960,NF1964}. Numerous analogues have been proposed in \cite{Agler, Agler2,AEM2002,AM2003,AA1992,VM1988,MV1993}.
In particular, we mention the models involving the backward shift on the Hardy space, weighted Bergman space, and weighted Dirichlet space given by Sz.-Nagy-Foia\c{s} \cite{NF1964}, Agler \cite{Agler, Agler2}, and M\"{u}ller \cite{VM1988}, respectively.
The results have been previously used by Douglas, Treil and the third author to describe similarity to the backward shift on these spaces in \cite{DKT}, \cite{K2016}, and \cite{KT}.

If $n$ is a positive integer, then an \emph{$n$-hypercontraction} is an operator $T$ with
$\sum\limits_{j=0}^k (-1)^j{k \choose j}(T^*)^jT^j \geq 0$ for all $1 \leq k \leq n$. In addition, we denote $D_{T}:=\left(\sum\limits_{j=0}^n (-1)^j{n \choose j}(T^*)^jT^j\right)^{1/2}$ as \emph{the defect operator} of the $n$-hypercontraction $T$.

For a positive integer $n$, we let $\mathcal{M}_n$ be the Hilbert space with reproducing kernel
$K_n(z,w)=\frac{1}{(1-\overline{w}z)^{n}}$ for $z, w \in \mathbb{D}$. As can be easily checked, different function spaces correspond to different $n$'s: the Hardy space for $n=1$ and the weighted Bergman spaces for $n \geq 2$. The vector-valued space $\mathcal{M}_{n, E}$ for a separable Hilbert space $E$ can also be defined. The forward shift $S_{n,E}$ on $\mathcal{M}_{n, E}$ is defined as $S_{n,E}f(z)=zf(z),$
and the  backward shift operator $S^*_{n,E}$ is its adjoint. We are ready to state the following theorem by Agler \cite{Agler,Agler2}.

\begin{thm}\label{model}
For $T \in \mathcal{L}(\mathcal{H})$, there exist a Hilbert space $E$ and an
$S^*_{n, E}$-invariant subspace $\mathcal{N} \subseteq
\mathcal{M}_{n, E}$ such that $T\sim_{u}S^*_{n, E}|_{\mathcal{N}}$ if and only if
$T$ is an $n$-hypercontraction and $\lim\limits_{j\rightarrow \infty} \Vert T^j x\Vert=0$ for all $x \in \mathcal{H}$.
\end{thm}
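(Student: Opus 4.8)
The plan is to treat the two implications separately. The forward direction is a routine verification; the converse requires building an explicit model map $\Pi$ and then establishing a summation identity, and it is that identity that carries the real difficulty.

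\emph{Necessity.} Since both the $n$-hypercontraction property and the condition $\lim_{j}\|T^{j}x\|=0$ are invariant under unitary equivalence, I would only need to verify them for $S^{*}_{n,E}|_{\mathcal N}$. For $S^{*}_{n,E}$ itself on $\mathcal M_{n,E}$: writing $f=\sum_k\hat f(k)z^k$ gives $S^{*}_{n,E}\big(\sum_k\hat f(k)z^k\big)=\sum_k\tfrac{a_k}{a_{k+1}}\hat f(k+1)z^k$ with $a_k=\binom{n+k-1}{k}$ (and $\|z^k\|_{\mathcal M_n}^2=a_k^{-1}$), so $\|(S^{*}_{n,E})^{j}f\|^{2}=\sum_{m\ge j}\tfrac{a_{m-j}}{a_m^{2}}\|\hat f(m)\|^{2}\le\sum_{m\ge j}\tfrac{\|\hat f(m)\|^{2}}{a_m}\to 0$, while a direct computation with the kernel---equivalently, Agler's hereditary functional calculus applied to the fact that $(1-\overline wz)^{k}K_{n}(z,w)=(1-\overline wz)^{k-n}$ is a positive semidefinite kernel for $0\le k\le n$---gives $\sum_{j=0}^{k}(-1)^{j}\binom{k}{j}(S_{n,E})^{j}(S^{*}_{n,E})^{j}\ge 0$ for $1\le k\le n$. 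These pass to the $S^{*}_{n,E}$-invariant subspace $\mathcal N$: for $x\in\mathcal N$ one has $(S^{*}_{n,E}|_{\mathcal N})^{j}x=(S^{*}_{n,E})^{j}x$, and since $\mathcal N^{\perp}$ is $S_{n,E}$-invariant, $\big((S^{*}_{n,E}|_{\mathcal N})^{*}\big)^{j}=P_{\mathcal N}(S_{n,E})^{j}|_{\mathcal N}$; hence, with $B:=S^{*}_{n,E}|_{\mathcal N}$, $\big\langle\sum_{j}(-1)^{j}\binom{k}{j}(B^{*})^{j}B^{j}x,x\big\rangle=\sum_{j}(-1)^{j}\binom{k}{j}\|(S^{*}_{n,E})^{j}x\|^{2}\ge 0$ and $\|B^{j}x\|=\|(S^{*}_{n,E})^{j}x\|\to 0$.

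\emph{Sufficiency.} Assume $T$ is an $n$-hypercontraction with $\lim_{j}\|T^{j}x\|=0$ (in particular $I-T^{*}T\ge0$, so $T$ is a contraction, and $D_{T}$ is well defined). I would set $E:=\overline{D_{T}\mathcal H}$ and define $\Pi\colon\mathcal H\to\mathcal M_{n,E}$ by
\[
(\Pi x)(z)=\sum_{k=0}^{\infty}\binom{n+k-1}{k}\,(D_{T}T^{k}x)\,z^{k}.
\]
The weights $\binom{n+k-1}{k}$ are forced by the intertwining relation $\Pi T=S^{*}_{n,E}\Pi$, which is immediate from the formula for $S^{*}_{n,E}$ above. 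Since
\[
\|\Pi x\|^{2}_{\mathcal M_{n,E}}=\sum_{k=0}^{\infty}\binom{n+k-1}{k}\|D_{T}T^{k}x\|^{2}=\Big\langle\sum_{k=0}^{\infty}\binom{n+k-1}{k}T^{*k}D_{T}^{2}T^{k}\,x,\,x\Big\rangle,
\]
$\Pi$ is a well-defined isometry as soon as one proves
\begin{equation}
\sum_{k=0}^{\infty}\binom{n+k-1}{k}\,T^{*k}D_{T}^{2}\,T^{k}=I_{\mathcal H}\qquad\text{(strong operator topology)}.\tag{$\star$}
\end{equation}
Granting $(\star)$: $\mathcal N:=\Pi\mathcal H$ is closed (the isometric image of a Hilbert space) and $S^{*}_{n,E}$-invariant, because $S^{*}_{n,E}\mathcal N=\Pi T\mathcal H\subseteq\mathcal N$; and $\Pi\colon\mathcal H\to\mathcal N$ is unitary with $\Pi^{*}\big(S^{*}_{n,E}|_{\mathcal N}\big)\Pi=T$, so $T\sim_{u}S^{*}_{n,E}|_{\mathcal N}$, completing the converse.

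\emph{The identity $(\star)$---the main obstacle.} I would prove $(\star)$ by induction on $n$. The case $n=1$ is the telescoping identity $\sum_{k=0}^{N}T^{*k}(I-T^{*}T)T^{k}=I-T^{*(N+1)}T^{N+1}$, whose right side tends strongly to $I$ by hypothesis. For the inductive step, writing $B_{m}(T):=\sum_{j=0}^{m}(-1)^{j}\binom{m}{j}T^{*j}T^{j}$ (so $D_{T}^{2}=B_{n}(T)$), the relation $B_{n}(T)=B_{n-1}(T)-T^{*}B_{n-1}(T)T$ and Pascal's rule yield, for every $N$,
\[
\sum_{k=0}^{N}\binom{n+k-1}{k}T^{*k}B_{n}(T)T^{k}=\sum_{k=0}^{N}\binom{n+k-2}{k}T^{*k}B_{n-1}(T)T^{k}-\binom{n+N-1}{N}T^{*(N+1)}B_{n-1}(T)T^{N+1}.
\]
Because $T$ is also an $(n-1)$-hypercontraction with the same asymptotic property, the inductive hypothesis applies to the first sum on the right (note $\binom{n+k-2}{k}=\binom{(n-1)+k-1}{k}$): it converges strongly to $I$ and, in quadratic-form terms, $\sum_{k}\binom{n+k-2}{k}b_{k}=\|x\|^{2}<\infty$, where $b_{k}:=\langle B_{n-1}(T)T^{k}x,T^{k}x\rangle$. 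Now $(n-1)$- and $n$-hypercontractivity give $b_{k}\ge 0$ and $b_{k}\ge b_{k+1}$, and since $\binom{n+k-2}{k}\ge c\,k^{\,n-2}$ for large $k$, the elementary fact that a nonincreasing nonnegative sequence with $\sum_{k}k^{\,n-2}b_{k}<\infty$ satisfies $k^{\,n-1}b_{k}\to 0$ (from $\sum_{K<k\le 2K}k^{\,n-2}b_{k}\ge K^{\,n-1}b_{2K}$) forces the remainder $\binom{n+N-1}{N}b_{N+1}\to 0$. Hence the quadratic forms of $S_{N}:=\sum_{k=0}^{N}\binom{n+k-1}{k}T^{*k}B_{n}(T)T^{k}$ converge to $\|x\|^{2}$, and since $0\le S_{N}\le S_{N+1}\le I$, monotone convergence for self-adjoint operators yields $S_{N}\to I$ strongly, i.e.\ $(\star)$. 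The genuinely delicate point is this remainder estimate, where the monotonicity coming from $n$-hypercontractivity must be combined with the summability furnished by the inductive step; everything else is bookkeeping.
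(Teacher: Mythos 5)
The paper does not actually prove this statement: Theorem \ref{model} is quoted from Agler \cite{Agler,Agler2} and used as a black box (its only echo in the paper is the formula $Vx=\sum_k \|z^k\|_{\mathcal{M}_n}^{-2} z^k\otimes D_T T^k x$ inside the proof of Lemma \ref{lemma3.2}, and the parallel computation for the Dirichlet analogue in Lemma \ref{lemma4}). Your blind proof is, as far as I can check, correct and complete, and your model map $\Pi$ is exactly the paper's $V$ since $\|z^k\|_{\mathcal{M}_n}^{-2}=\binom{n+k-1}{k}$. The one place where you genuinely add something beyond what the paper's surrounding arguments would suggest is the identity $(\star)$: the paper's proof of Lemma \ref{lemma4} establishes the corresponding isometry by freely rearranging the double sum $\sum_n b_n\sum_k c_k\|T^{n+k}x\|^2$ into $\sum_l\|T^lx\|^2\sum_{n+k=l}b_nc_k$, which is legitimate there because M\"{u}ller's hypothesis $\sum_s\|T^s\|^2 b_s\le 1$ together with $-b_k\le c_k<0$ gives absolute convergence; in Agler's setting the analogous coefficients $c_k=(-1)^k\binom{n}{k}$ alternate in sign and no absolute-convergence hypothesis is available, so the naive rearrangement needs justification. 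Your induction on $n$ via $B_n(T)=B_{n-1}(T)-T^*B_{n-1}(T)T$, Pascal's rule, and the Abel-summation boundary term $\binom{n+N-1}{N}b_{N+1}$ — killed by combining the monotonicity $b_k\ge b_{k+1}$ (from $n$-hypercontractivity) with the summability $\sum_k k^{n-2}b_k<\infty$ (from the inductive step) — is exactly the right way to close that gap, and the necessity direction (hereditary positivity of $(1-\bar wz)^{k-n}$ plus restriction to a co-invariant subspace) is standard and correctly executed. In short: correct proof of an imported theorem, with the genuinely delicate point (the remainder estimate in $(\star)$) identified and handled properly.
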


Consider the vector-valued $l^2$ space with components from a Hilbert space $E$ with
orthonormal basis $\{e_{i}\}_{i=0}^{\infty}$. The backward shift $S^{*}_{\alpha, E} \in \mathcal{L}(l^2 \otimes E)$ with a weight sequence $\{\alpha_{i}\}_{i\geq1}$ is defined by
$$S^{*}_{\alpha,E}e_{0}=0\quad \text{and} \quad S^{*}_{\alpha, E}e_{i}=\alpha_{i}e_{i-1},\quad i\geq 1.$$
Setting $b_{i}=\prod\limits_{j=1}^{i}\alpha_{j}^{-2}$ for $i\geq 1,$ we have the following theorem by M\"{u}ller \cite{VM1988}, where we assume $\alpha_1 \geq \alpha_2 \geq \cdots >0$:

\begin{theorem}\label{lem4}
Let $T \in \mathcal{L}(\mathcal{H})$ satisfy
$\sum\limits_{s=1}^{\infty}\Vert T^{s}\Vert^{2}b_{s}\leq 1$. Then there exist a Hilbert space $E$ and a subspace $\mathcal{N} \subset l^2 \otimes E$ with $S_{\alpha, E}^* \mathcal{N}\subset \mathcal{N}$ such that
$T\sim_{u}S^*_{\alpha,E}|_{\mathcal{N}}.$
\end{theorem}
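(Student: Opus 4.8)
The plan is to produce an explicit isometric intertwiner $V\colon\mathcal H\to\ell^{2}\otimes E$ carrying $T$ onto an $S^{*}_{\alpha,E}$-invariant subspace. Everything reduces to solving one operator equation on $\mathcal H$: I would first show that there is a positive operator $P\in\mathcal L(\mathcal H)$ with
$$\sum_{n=0}^{\infty}b_{n}(T^{*})^{n}P\,T^{n}=I_{\mathcal H},$$
the series converging in operator norm. Granting this, put $E=\overline{\operatorname{ran}}\,P^{1/2}$ (the defect space) and define $Vh=\sum_{n\ge0}b_{n}^{1/2}\,e_{n}\otimes P^{1/2}T^{n}h$. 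Then $\|Vh\|^{2}=\sum_{n\ge0}b_{n}\langle P T^{n}h,T^{n}h\rangle=\big\langle\big(\sum_{n}b_{n}(T^{*})^{n}PT^{n}\big)h,h\big\rangle=\|h\|^{2}$, so $V$ is a well-defined isometry. Using $S^{*}_{\alpha,E}(e_{n}\otimes g)=\alpha_{n}e_{n-1}\otimes g$ for $n\ge1$ (and $S^{*}_{\alpha,E}e_{0}=0$) together with the identity $b_{m+1}^{1/2}\alpha_{m+1}=b_{m}^{1/2}$, which is just a restatement of $b_{m+1}=b_{m}\alpha_{m+1}^{-2}$, one checks after reindexing that $S^{*}_{\alpha,E}Vh=V(Th)$. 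Hence $\mathcal N:=\operatorname{ran}V$ is closed, satisfies $S^{*}_{\alpha,E}\mathcal N=VT\mathcal H\subseteq\mathcal N$, and $V\colon\mathcal H\to\mathcal N$ is a unitary with $V T V^{*}=S^{*}_{\alpha,E}|_{\mathcal N}$, which is exactly the assertion.

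The substance of the argument is thus the construction of $P$. I would rewrite the equation as $(I+L)(P)=I$, where $L\colon\mathcal L(\mathcal H)\to\mathcal L(\mathcal H)$ is given by $L(X)=\sum_{n\ge1}b_{n}(T^{*})^{n}XT^{n}$; the hypothesis $\sum_{s\ge1}\|T^{s}\|^{2}b_{s}\le1$ makes this series norm-convergent and shows $L$ is a completely positive linear map with $\|L\|\le1$. When the inequality is strict we have $\|L\|<1$, and I would simply take $P=(I+L)^{-1}(I)=\sum_{k\ge0}(-1)^{k}L^{k}(I)$. Regrouping this absolutely convergent series as $\sum_{j\ge0}L^{2j}\big(I-L(I)\big)$ and noting $I-L(I)\ge(1-\|L\|)I\ge0$ exhibits $P$ as a norm-convergent sum of positive operators, so $P\ge0$; and the $n=0$ term of the equation, $P=I-L(P)$ with $L(P)\ge0$, gives $P\le I$ as well.

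To cover the boundary case $\sum_{s\ge1}\|T^{s}\|^{2}b_{s}=1$ I would use a scaling trick. The case $T=0$ is trivial ($P=I$), so assume $T\ne0$. For $0<r<1$ the operator $rT$ satisfies $\sum_{s\ge1}\|(rT)^{s}\|^{2}b_{s}<\sum_{s\ge1}\|T^{s}\|^{2}b_{s}\le1$ strictly, so the previous step yields $P_{r}\ge0$ with $\sum_{n\ge0}b_{n}r^{2n}(T^{*})^{n}P_{r}T^{n}=I$ and $0\le P_{r}\le I$. Let $P$ be a weak-operator cluster point of $(P_{r})$ as $r\to1^{-}$, which exists by WOT-compactness of the unit ball. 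For fixed $h$ the tail bound $\sum_{n\ge N}b_{n}r^{2n}\langle P_{r}T^{n}h,T^{n}h\rangle\le\|h\|^{2}\sum_{n\ge N}b_{n}\|T^{n}\|^{2}$ is uniform in $r$, so one may pass to the limit term by term in $\|h\|^{2}=\sum_{n\ge0}b_{n}r^{2n}\langle P_{r}T^{n}h,T^{n}h\rangle$ to get $\|h\|^{2}=\sum_{n\ge0}b_{n}\langle P T^{n}h,T^{n}h\rangle$, i.e. the desired equation, while $P\ge0$ as a WOT-limit of positive operators. I expect this final limiting step — justifying the interchange of the $r\to1$ limit with the infinite sum, and ensuring the boundary value $P$ remains positive and still exactly balances the identity — to be the only genuinely delicate point; the rest is bookkeeping with the weights $b_{n}$.
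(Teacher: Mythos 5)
Your argument is correct, and it reaches the conclusion by a genuinely different route at the one point where the real work lies. The paper does not prove Theorem 2.5 itself (it is quoted from M\"{u}ller), but its proof of the Dirichlet specialization, Lemma 4.1, shows what the intended argument is: one introduces the convolution--inverse coefficients $c_0=1$, $\sum_{i=0}^{l}b_ic_{l-i}=0$ for $l\geq 1$, proves by induction (via M\"{u}ller's Lemma 2.1, which is where the monotonicity hypothesis $\alpha_1\geq\alpha_2\geq\cdots$ enters) that $-b_k\leq c_k\leq 0$, and deduces that $D_T^2=\sum_k c_k(T^{*})^kT^k\geq I-\sum_k b_k\Vert T^k\Vert^2 I\geq 0$; the isometry $Vx=\sum_n b_n^{1/2}e_n\otimes D_TT^nx$ and the intertwining computation are then exactly as in your write-up. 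Your $P$ is in fact the same operator --- expanding your Neumann series $\sum_k(-1)^kL^k(I)$ and collecting terms by total power of $T$ reproduces precisely $\sum_l c_l(T^{*})^lT^l$ with the $c_l$ the convolution inverse of the $b_l$ --- but you establish its positivity without any coefficient estimates, by regrouping the series as $\sum_j L^{2j}(I-L(I))$ and using only that $L$ is a positive map with $L(I)\leq I$; the boundary case $\sum_s\Vert T^s\Vert^2b_s=1$ is then handled by your dilation $T\mapsto rT$ plus WOT-compactness, and the uniform tail bound you give does justify the term-by-term passage to the limit. What this buys you is independence from the monotonicity of the weight sequence: your proof uses only $\alpha_i>0$ and the summability hypothesis, whereas the paper's route genuinely needs the ordering of the weights to control the signs and sizes of the $c_k$. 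What it costs is the extra limiting argument in the boundary case, which the coefficient approach gets for free. Both are complete proofs of the stated theorem.
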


For general unilateral shift operators, the work of Shields given in \cite{SH} is quite extensive. The result that we need the most in this paper is the next theorem, which can also be used to give an easy proof that the backward shift operators on the Hardy space, the weighted Bergman spaces, and the weighted Dirichlet spaces are not similar to each other.

\begin{theorem}\label{lem11}
Let $T_1$ and $T_2$ be unilateral weighted shifts with non-zero weight sequences $\{\alpha_i\}_{i \geq 1}$ and $\{\beta_i\}_{i \geq 1}$, respectively. Then $T_1\sim_{s}T_2$ if and only if there exist constants $C_1$ and $C_2$ such that for every positive integer $l$,
$$0<C_1\leq \left |\frac{\alpha_k\alpha_{k+1}\cdots\alpha_l}{\beta_k\beta_{k+1}\cdots \beta_l} \right | \leq C_2,$$
for all $1\leq k\leq l$.
\end{theorem}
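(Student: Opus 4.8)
The plan is to reduce the whole statement to a matrix computation with the intertwining operator, exploiting that an operator intertwining two injective unilateral weighted shifts is automatically ``analytic'' --- its first row, apart from the corner entry, vanishes. Realize $T_1,T_2$ on $\ell^2$ with orthonormal basis $\{e_i\}_{i\ge0}$ so that $T_1 e_{i-1}=\alpha_i e_i$ and $T_2 e_{i-1}=\beta_i e_i$, and put $A_n=\alpha_1\cdots\alpha_n$, $B_n=\beta_1\cdots\beta_n$, $r_n=|A_n/B_n|$ (with $r_0=1$). A first, purely formal, observation: since $\alpha_k\cdots\alpha_l/(\beta_k\cdots\beta_l)$ equals $r_l/r_{k-1}$ up to modulus, the condition in the theorem is equivalent to the existence of constants $0<c\le C<\infty$ with $c\le r_n\le C$ for all $n$ (take $k=1$ for one implication; divide two such inequalities for the other).

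For sufficiency I would simply write down a diagonal intertwiner. Let $Xe_n=d_n e_n$ with $d_0=1$ and $d_n=\prod_{i=1}^n\beta_i/\alpha_i$. Then $XT_1 e_n=\alpha_{n+1}d_{n+1}e_{n+1}=d_n\beta_{n+1}e_{n+1}=T_2 Xe_n$, so $XT_1=T_2X$; and $|d_n|=1/r_n\in[1/C,1/c]$, so $X$ is bounded, bounded below, hence invertible, and $T_1=X^{-1}T_2 X$.

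For necessity, suppose $X$ is bounded and invertible with $XT_1=T_2X$, and write $x_{m,n}=\langle Xe_n,e_m\rangle$. Matching the $e_m$-coefficients in $XT_1 e_n=T_2Xe_n$ gives $x_{0,m}=0$ for every $m\ge1$, together with the recursion $\alpha_{n+1}x_{m,n+1}=\beta_m x_{m-1,n}$ for $m\ge1$; specializing to $m=n+1$ and iterating yields $x_{n,n}=(B_n/A_n)x_{0,0}$. The identical computation for $X^{-1}$, which satisfies $X^{-1}T_2=T_1X^{-1}$, gives $(X^{-1})_{n,n}=(A_n/B_n)(X^{-1})_{0,0}$. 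Because $x_{0,m}=0$ for $m\ge1$, reading off the $(0,0)$-entry of $XX^{-1}=I$ collapses to $x_{0,0}(X^{-1})_{0,0}=1$, so neither corner entry vanishes. Then $|x_{n,n}|\le\|X\|$ forces $r_n\ge|x_{0,0}|/\|X\|>0$, and $|(X^{-1})_{n,n}|\le\|X^{-1}\|$ forces $r_n\le\|X^{-1}\|/|(X^{-1})_{0,0}|<\infty$; that is, $\{r_n\}$ is bounded above and bounded away from $0$, which is the asserted condition.

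The routine chores --- absorbing the phases of the (complex, nonzero) weights by conjugating each $T_j$ with a diagonal unitary so that all weights may be taken positive, and confirming that the diagonal $X$ built in the sufficiency step is genuinely invertible rather than merely injective with dense range --- I would dispatch in a couple of lines. The one genuinely load-bearing step is that the $(0,0)$-entries of $X$ and $X^{-1}$ are nonzero: it is precisely here that the analyticity of intertwiners of injective weighted shifts (encoded by $x_{0,m}=0$ for $m\ge1$) is used, and it is what forces the size comparison onto the diagonal, where the product $B_n/A_n$ of weight ratios appears on the nose.
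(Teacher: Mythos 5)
Your proof is correct. A caveat on the comparison: the paper does not actually prove this statement --- Theorem \ref{lem11} is imported verbatim from Shields's survey \cite{SH} and used as a black box --- so there is no in-paper proof to match against. What you have written is essentially the classical argument of Shields: reduce the two-index condition to boundedness above and below of $r_n=|\alpha_1\cdots\alpha_n|/|\beta_1\cdots\beta_n|$, exhibit a diagonal similarity when $r_n$ is bounded and bounded below, and for the converse show that any intertwiner is lower triangular with diagonal entries $(B_n/A_n)x_{0,0}$, so that the uniform bounds $|x_{n,n}|\le\Vert X\Vert$ and $|(X^{-1})_{n,n}|\le\Vert X^{-1}\Vert$ pin down $r_n$ once the corner entries are known to be nonzero. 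You correctly identify that last point as the load-bearing step, and your derivation of it from $x_{0,m}=0$ for $m\ge 1$ together with the $(0,0)$-entry of $XX^{-1}=I$ is sound. The remaining ``routine chores'' are genuinely routine; in fact the phase reduction is not even needed, since both halves of your argument only ever use the moduli $|d_n|=1/r_n$ and $|x_{n,n}|$, so the complex weights cause no difficulty. I see no gap.
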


\section{Similarity in the class $\mathcal{B}_{m}(\mathbb{D})$}\label{sec2}
In this section, we study Cowen-Douglas operators whose associated Hermitian holomorphic eigenvector bundles possess a tensor structure. A relationship
between the corresponding metric matrix and similarity will be given. Let $\mathcal{E}$ be a Hermitian holomorphic vector bundle over $\mathbb{D}$ of rank $n$. One can form the metric matrix $h(w)=\left(\langle \gamma_{j}(w),\gamma_{i}(w)\rangle\right)_{i,j=1}^{n}$ using a holomorphic frame  $\{\gamma_{1},\ldots,\gamma_{n}\}$ of $\mathcal{E}$. Denote by $\{\sigma_{i}\}_{i=1}^{n}$ an orthonormal basis for $\mathbb{C}^{n}$ and let
$$E=\bigvee\limits_{w\in\mathbb{D}}\{\gamma_i(w):1\leq i\leq n\}.$$
Now define an operator-valued function $F \in H^{\infty}_{\mathbb{C}^n \rightarrow E}(\mathbb{D})$ such that $$F(w)\sigma_{i}=\gamma_{i}(w), \quad 1\leq i\leq n.$$

\begin{lemma}\label{lem3}
 The following statements are equivalent:
\begin{itemize}
  \item [(1)] $F$ is left-invertible in $H^{\infty}_{\mathbb{C}^{n}\rightarrow E}(\mathbb{D})$.
  \item [(2)] There exist constants $C_1$ and $C_2$ such that
  $$0<C_{1}I_{n}\leq h(w)\leq C_{2}I_{n}, \quad w\in\mathbb{D}.$$
  \item [(3)] For every $1 \leq i \leq n$, $\Vert \gamma_{i}(w)\Vert$ is uniformly bounded on $\mathbb{D}$ and there exist constants $A_1$ and $A_2$ such that
     $$0<A_{1}\leq \det h(w)\leq A_{2}, \quad w \in \mathbb{D}.$$
\end{itemize}
\end{lemma}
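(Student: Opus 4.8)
The plan is to establish the implications $(1)\Rightarrow(2)\Rightarrow(3)\Rightarrow(1)$, treating the metric matrix $h(w)$ as the Gram matrix of the frame vectors and translating each of the three conditions into a statement about the operator $F(w)$. First I would record the basic identity $h(w) = F(w)^*F(w)$: indeed, $\langle \gamma_j(w),\gamma_i(w)\rangle = \langle F(w)\sigma_j, F(w)\sigma_i\rangle = \langle F(w)^*F(w)\sigma_j, \sigma_i\rangle$, so $h(w)$ is exactly the matrix of the positive operator $F(w)^*F(w)$ on $\mathbb{C}^n$ in the basis $\{\sigma_i\}$. This single identity is the engine for the whole equivalence, and it reduces everything to linear algebra on $\mathbb{C}^n$ together with a uniform-in-$w$ bookkeeping.

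For $(1)\Rightarrow(2)$: if $G \in H^\infty_{E\to\mathbb{C}^n}(\mathbb{D})$ satisfies $G(w)F(w) = I_n$ for all $w$, then $I_n = G(w)F(w)$ gives $\|\xi\| \le \|G(w)\|\,\|F(w)\xi\|$ for $\xi\in\mathbb{C}^n$, hence $\|F(w)\xi\|^2 \ge \|G\|_\infty^{-2}\|\xi\|^2$, i.e. $h(w) = F(w)^*F(w) \ge C_1 I_n$ with $C_1 = \|G\|_\infty^{-2} > 0$; the upper bound $h(w) \le C_2 I_n$ with $C_2 = \|F\|_\infty^2$ is immediate from $\|F(w)\xi\|^2 \le \|F\|_\infty^2\|\xi\|^2$. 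For $(2)\Rightarrow(3)$: the upper bound $h(w)\le C_2 I_n$ gives $\|\gamma_i(w)\|^2 = h(w)_{ii} \le C_2$, so the $\|\gamma_i(w)\|$ are uniformly bounded; and since $C_1 I_n \le h(w) \le C_2 I_n$ controls all eigenvalues of $h(w)$ between $C_1$ and $C_2$, the determinant satisfies $C_1^n \le \det h(w) \le C_2^n$, giving $A_1 = C_1^n$, $A_2 = C_2^n$. The implication $(3)\Rightarrow(2)$ is the standard linear-algebra fact that I would spell out: the diagonal bound $h(w)_{ii} \le M^2$ (where $M = \sup_{i,w}\|\gamma_i(w)\|$) forces every eigenvalue of the positive matrix $h(w)$ to be $\le nM^2$ (by the trace bound, or by Hadamard-type estimates on entries since $|h(w)_{ij}| \le M^2$), while $\det h(w) \ge A_1$ together with each eigenvalue being at most $nM^2$ forces the smallest eigenvalue to be at least $A_1/(nM^2)^{n-1} > 0$; this yields $(2)$.

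For $(3)\Rightarrow(1)$ — or equivalently $(2)\Rightarrow(1)$, which is cleaner — I would invoke Theorem \ref{Fuhrmann} (Vasyunin's corona theorem): from $(2)$, $F(w)^*F(w) = h(w) \ge C_1 I_n > 0$ for all $w\in\mathbb{D}$, and $F\in H^\infty_{\mathbb{C}^n\to E}(\mathbb{D})$ with $\dim \mathbb{C}^n = n < \infty$; if $\dim E = \infty$, Theorem \ref{Fuhrmann} directly produces $G\in H^\infty_{E\to\mathbb{C}^n}(\mathbb{D})$ with $G(w)F(w)\equiv I_n$, which is exactly left-invertibility of $F$. The remaining case $\dim E < \infty$ is handled by the same reasoning in the easier finite-dimensional corona setting (or by embedding $E$ isometrically into an infinite-dimensional space and extending $F$ by zero), so this is a minor point to dispatch at the end.

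The main obstacle I anticipate is the $(3)\Rightarrow(2)$ step: condition $(3)$ gives only the diagonal entries of $h(w)$ and its determinant, and one must argue that these suffice to bound the full spectrum of $h(w)$ uniformly away from $0$ and $\infty$. The upper bound on the spectrum is not automatic from the diagonal alone for a general matrix, but it is automatic here because $h(w)$ is positive semidefinite, so $|h(w)_{ij}|^2 \le h(w)_{ii}h(w)_{jj} \le M^4$ and hence $\|h(w)\| \le \operatorname{tr} h(w) \le nM^2$; once that uniform upper bound is in hand, the determinant lower bound pins the bottom eigenvalue. The only subtlety worth stating carefully is that $(3)$ really requires the boundedness of \emph{each} $\|\gamma_i(w)\|$, not merely boundedness of $\det h(w)$ — without the diagonal bound, a matrix can have bounded determinant and still be ill-conditioned — and the statement of $(3)$ already includes this hypothesis, so the argument goes through.
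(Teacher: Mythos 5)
Your proposal is correct and follows essentially the same route as the paper: the identity $h(w)=F(w)^*F(w)$, the trace/eigenvalue/determinant bookkeeping for $(2)\Leftrightarrow(3)$, and Vasyunin's corona theorem (Theorem \ref{Fuhrmann}) for producing the left inverse. Your quantitative lower bound $\lambda_{\min}\geq A_{1}/(nM^{2})^{n-1}$ in $(3)\Rightarrow(2)$ replaces the paper's contradiction argument, and your remark about the $\dim E<\infty$ case in Vasyunin's theorem is a small point of care the paper glosses over, but neither changes the substance.
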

\begin{proof}
In proving the implications, we use the well-known fact that for the eigenvalues $\{\lambda_i(w)\}_{i=1}^n$ of the metric matrix
$h(w)=\left(\langle \gamma_{j}(w),\gamma_{i}(w)\rangle\right)_{i,j=1}^{n}$,
$$\sum\limits_{i=1}^{n}\lambda_{i}(w)=\sum\limits_{i=1}^{n}\Vert \gamma_{i}(w)\Vert^{2}, \quad w\in\mathbb{D}.$$

$(1)\Rightarrow(2)$: Since $F$ is left-invertible in $H^{\infty}_{\mathbb{C}^{n}\rightarrow E}(\mathbb{D})$, there is a  $G\in H^{\infty}_{E\rightarrow\mathbb{C}^{n}}(\mathbb{D})$ such that
$$G(w)F(w)= I_{\mathbb{C}^{n}},\quad w\in\mathbb{D}.$$
It is then obvious that $C_1=1/\|G\|^2$. Next, since $\Vert \gamma_{i}(w)\Vert, 1\leq i\leq n,$ are uniformly bounded on $\mathbb{D}$, $C_2=n \sup\{\|\gamma_i(w)\|, 1 \leq i \leq n\}.$

$(2)\Rightarrow(1)$:
Since $0<C_{1}I_{n}\leq h(w)\leq C_{2}I_{n},$
the function $F \in H^{\infty}_{\mathbb{C}^n \rightarrow E}(\mathbb{D})$ satisfies
\begin{equation*}\label{01}
 F(w)^{*}F(w)=\left(\langle\gamma_{j}(w),\gamma_{i}(w)\rangle\right)_{i,j=1}^{n}=h(w)
\geq C_{1}I_{n},\quad w\in\mathbb{D}.
\end{equation*}
Then by Theorem \ref{Fuhrmann}, there exists a function $G\in H^{\infty}_{E\rightarrow\mathbb{C}^{n}}(\mathbb{D})$ such that
$$G(w)F(w)= I_{\mathbb{C}^{n}}, \quad w\in\mathbb{D}.$$

$(2)\Rightarrow(3)$:
Since $0<C_{1}I_{n}\leq h(w)\leq C_{2}I_{n},$
$$C_{1}\leq\lambda_{i}(w)\leq C_{2}\quad\text{and}\quad\sum\limits_{i=1}^{n}\Vert \gamma_{i}(w)\Vert^{2}=\sum\limits_{i=1}^{n}\lambda_{i}(w)\leq nC_{2},$$ so that each $\Vert \gamma_{i}(w)\Vert, 1\leq i\leq n,$ is uniformly bounded on $\mathbb{D}$ and
$$0<C_{1}^{n}\leq \det h(w)=\prod\limits_{i=1}^{n}\lambda_{i}(w)\leq (nC_{2})^{n},\quad w\in\mathbb{D}.$$

$(3)\Rightarrow(2)$: Since
$h(w)$ is positive definite and there exists some $A>0$ such that $\Vert \gamma_{i}(w)\Vert^{2}\leq A$ for every $1\leq i\leq n$ and $w \in \mathbb{D},$
$$
\lambda_{i}(w)\geq0, \quad \prod\limits_{i=1}^{n}\lambda_{i}(w)\leq(nA)^{n},\quad\text{and}\quad h(w)\leq nAI_{n}.
$$
We claim that $C:=\inf\limits_{w\in\mathbb{D}}\{\lambda_{i}(w):1\leq i\leq n\}>0$. If not, we assume, without loss of generality that $\inf\limits_{w\in\mathbb{D}}\lambda_{1}(w)=0$. Then there is a Cauchy sequence $\{w_{k}\}_{k=1}^{\infty}\subset\mathbb{D}$ such that $\lim\limits_{k\rightarrow \infty}\lambda_{1}(w_{k})=0$. This implies that  $\prod\limits_{i=2}^{n}\lambda_{i}(w_{k})\rightarrow \infty$ as $k\rightarrow \infty$, which is obviously contradictory to $\|\gamma_i(w)\|^2 \leq A$.
Thus, $0<CI_{n}\leq h(w)\leq nA I_{n}$ for every $w\in\mathbb{D}.$
\end{proof}

Regarding the Cowen-Douglas conjecture,  Clark and Misra noted in \cite{CM} that the ratio $a_{w}$ of the metrics is more suitable as a similarity invariant for Cowen-Douglas operators. For operators $T\in\mathcal{B}_{m}(\mathbb{D})$ and $S\in\mathcal{B}_{m}(\mathbb{D})$, if
$\mathcal{E}_{T}=\mathcal{E}_{S}\otimes \mathcal{E}$ for some Hermitian holomorphic vector bundle $\mathcal{E}$ over $\mathbb{D}$, then the metrics of these holomorphic vector bundles satisfy $h_{\mathcal{E}_{T}}=h_{\mathcal{E}_{S}}\otimes h_{\mathcal{E}}.$ In particular, when $\mathcal{E}_{T}$ and $\mathcal{E}_{S}$ are holomorphic line bundles, the ratio of their metrics is the metric of holomorphic line bundle $\mathcal{E}$, and thus $a_{w}:=h_{\mathcal{E}}(w)=\frac{h_{\mathcal{E}_{T}}(w)}{h_{\mathcal{E}_{S}}(w)}$.
From the model theorems given in \cite{Agler,Agler2,AEM2002,VM1988,MV1993,N1986,NF1970} and the operator theoretic realization in \cite{QL},
we know of many operators whose associated holomorphic eigenvector bundles possess a tensor structure.
We now provide details on how to construct an $n$-hypercontractive Cowen-Douglas operator with this property, using the result of Agler in \cite{Agler2}.

\begin{lemma}\label{lemma3.2}
If $T\in\mathcal{B}_{m}(\mathbb{D})$ is an $n$-hypercontraction, then there exists a Hermitian holomorphic vector
bundle $\mathcal{E}$ over $\mathbb{D}$ of rank $m$ such that $\mathcal{E}_{T}$ and $\mathcal{E}_{S^{*}_{n}}\otimes\mathcal{E}$ are equivalent as Hermitian holomorphic vector bundles, where $S^{*}_{n}$ is the backward shift operator on $\mathcal{M}_{n}$.
\end{lemma}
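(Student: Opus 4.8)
The plan is to obtain the operator model for $T$ from Agler's theorem (Theorem~\ref{model}) and then read off the tensor structure of the eigenbundle by hand. First I would check that the hypotheses of Theorem~\ref{model} hold. An $n$-hypercontraction is in particular a $1$-hypercontraction, so $T^{*}T\leq I$ and hence $\|T^{j}\|\leq 1$ for all $j$. For each $w\in\mathbb{D}$ a nonzero vector $x_{w}\in\ker(T-w)$ satisfies $\|T^{j}x_{w}\|=|w|^{j}\|x_{w}\|\to 0$, and since $\bigvee_{w\in\mathbb{D}}\ker(T-w)=\mathcal{H}$ by the definition of $\mathcal{B}_{m}(\mathbb{D})$, a routine $\varepsilon/3$-argument using the uniform bound on $\{T^{j}\}$ gives $\lim_{j\to\infty}\|T^{j}x\|=0$ for every $x\in\mathcal{H}$. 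Thus Theorem~\ref{model} provides a Hilbert space $E$, an $S^{*}_{n,E}$-invariant subspace $\mathcal{N}\subseteq\mathcal{M}_{n,E}$, and a unitary $U\colon\mathcal{H}\to\mathcal{N}$ with $T=U^{*}\big(S^{*}_{n,E}|_{\mathcal{N}}\big)U$; in particular $\mathcal{E}_{T}$ and $\mathcal{E}_{S^{*}_{n,E}|_{\mathcal{N}}}$ are equivalent as Hermitian holomorphic vector bundles.

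Next I would analyze $\mathcal{E}_{S^{*}_{n,E}|_{\mathcal{N}}}$. Writing $\mathcal{M}_{n,E}=\mathcal{M}_{n}\otimes E$ with reproducing kernel $K_{n}(z,w)I_{E}$ and $S^{*}_{n,E}=S^{*}_{n}\otimes I_{E}$, one has $S^{*}_{n,E}\big(K_{n}(\cdot,\bar w)\otimes e\big)=w\big(K_{n}(\cdot,\bar w)\otimes e\big)$, so $\ker(S^{*}_{n,E}-w)=K_{n}(\cdot,\bar w)\otimes E$ and $\ker\big(S^{*}_{n,E}|_{\mathcal{N}}-w\big)$ is a subspace of it. Fix a holomorphic frame $\{\gamma_{1},\dots,\gamma_{m}\}$ of $\mathcal{E}_{T}$; then $\{U\gamma_{1},\dots,U\gamma_{m}\}$ is a holomorphic frame of $\mathcal{E}_{S^{*}_{n,E}|_{\mathcal{N}}}$, and by the above each $U\gamma_{i}(w)$ equals $K_{n}(\cdot,\bar w)\otimes g_{i}(w)$ for a unique $g_{i}(w)\in E$. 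Evaluating the $E$-valued function $U\gamma_{i}(w)$ at the origin recovers $g_{i}(w)=\big(U\gamma_{i}(w)\big)(0)$; since point evaluation at $0$ is a bounded linear map $\mathcal{M}_{n,E}\to E$ (of norm $K_{n}(0,0)^{1/2}=1$) and $w\mapsto U\gamma_{i}(w)$ is holomorphic, each $g_{i}\colon\mathbb{D}\to E$ is holomorphic. Because $\{U\gamma_{i}(w)\}_{i=1}^{m}$ is linearly independent and $K_{n}(\cdot,\bar w)\neq 0$, the vectors $\{g_{i}(w)\}_{i=1}^{m}$ are linearly independent, so $\{g_{1},\dots,g_{m}\}$ is a holomorphic frame for a rank-$m$ Hermitian holomorphic vector bundle $\mathcal{E}$ over $\mathbb{D}$, with metric $h_{\mathcal{E}}(w)=\big(\langle g_{j}(w),g_{i}(w)\rangle_{E}\big)_{i,j=1}^{m}$ inherited from $E$.

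Finally I would identify the tensor structure. Taking $w\mapsto K_{n}(\cdot,\bar w)$ as the holomorphic frame of the line bundle $\mathcal{E}_{S^{*}_{n}}$ (so $h_{\mathcal{E}_{S^{*}_{n}}}(w)=(1-|w|^{2})^{-n}$), the frame $\{U\gamma_{i}\}=\{K_{n}(\cdot,\bar w)\otimes g_{i}\}$ exhibits $\mathcal{E}_{S^{*}_{n,E}|_{\mathcal{N}}}$ as $\mathcal{E}_{S^{*}_{n}}\otimes\mathcal{E}$, with $h_{\mathcal{E}_{S^{*}_{n,E}|_{\mathcal{N}}}}(w)=h_{\mathcal{E}_{S^{*}_{n}}}(w)\otimes h_{\mathcal{E}}(w)$ since $\langle K_{n}(\cdot,\bar w)\otimes g_{i}(w),K_{n}(\cdot,\bar w)\otimes g_{j}(w)\rangle=(1-|w|^{2})^{-n}\langle g_{i}(w),g_{j}(w)\rangle_{E}$. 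Combining this with the equivalence $\mathcal{E}_{T}\sim_{u}\mathcal{E}_{S^{*}_{n,E}|_{\mathcal{N}}}$ coming from the unitary $U$ yields $\mathcal{E}_{T}\sim_{u}\mathcal{E}_{S^{*}_{n}}\otimes\mathcal{E}$, as claimed.

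The step I expect to be the main obstacle is the middle one: realizing $\mathcal{E}$ as a genuine holomorphic subbundle of the trivial bundle $\mathbb{D}\times E$ of constant rank $m$. This requires both the holomorphy of the sections $g_{i}$ (handled by the evaluation-at-$0$ device, provided one checks that the model realization in Theorem~\ref{model} is arranged so that the identification $\mathcal{M}_{n,E}=\mathcal{M}_{n}\otimes E$ and its reproducing kernel are compatible with the Cowen-Douglas frame $\{\gamma_{i}\}$) and the pointwise linear independence of $\{g_{i}(w)\}$, which is precisely what forces the fibers to have the correct dimension and hence $\mathcal{E}$ to have rank exactly $m$.
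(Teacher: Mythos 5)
Your proposal is correct and follows the same overall strategy as the paper: verify the hypotheses of Agler's model theorem, realize $T$ as $S^{*}_{n,E}|_{\mathcal{N}}$ up to unitary equivalence, observe that each eigenvector of the model sits inside $K_{n}(\cdot,\overline{w})\otimes E$, and read off the second tensor factor as the bundle $\mathcal{E}$. The one substantive difference is the device used to produce a holomorphic frame for $\mathcal{E}$. The paper works with the explicit model unitary $Vx=\sum_{k}\frac{z^{k}}{\|z^{k}\|^{2}_{\mathcal{M}_{n}}}\otimes D_{T}T^{k}x$, so that $V\gamma_{i}(w)=K_{n}(\cdot,\overline{w})\otimes D_{T}\gamma_{i}(w)$ and the frame of $\mathcal{E}$ is simply $\{D_{T}\gamma_{1},\dots,D_{T}\gamma_{m}\}$ --- holomorphic because a fixed bounded operator is applied to a holomorphic frame, and linearly independent because $V$ is isometric. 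You instead extract the $E$-components $g_{i}(w)$ abstractly and establish their holomorphy by composing with evaluation at the origin (using $K_{n}(0,\overline{w})=1$); this is valid and has the mild advantage of not depending on the particular form of the model unitary. What it does not give you is the explicit identification $\mathcal{E}=\mathcal{E}_{D_{T}}$, the bundle induced by the defect operator, which is exactly what the paper needs afterwards (Definition~\ref{lemma2} and Theorem~\ref{t1} are phrased in terms of $h_{\mathcal{E}_{D_{T}}}$). For the lemma as stated, which only asserts the existence of some rank-$m$ bundle $\mathcal{E}$, your argument is complete.
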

\begin{proof}

Since $T\in\mathcal{L}(\mathcal{H})$ is an $n$-hypercontraction and $\bigvee\limits_{w\in \mathbb{D}}\mbox{ker }(T-w)=\mathcal{H}$, we have that $\lim\limits_{j\rightarrow \infty} \Vert T^j x\Vert=0$ for all $x \in \mathcal{H}$. From Theorem \ref{model}, $T$ is unitarily equivalent to the restriction of $S^{*}_{n, E}$ to an invariant subspace $\mathcal{N}$ of $\mathcal{M}_{n, E}$ for some Hilbert space $E,$ that is, there exists a unitary operator $V: \mathcal{H}\rightarrow\mathcal{N}$ defined as
\begin{equation*}\label{equ2}
Vx=\sum\limits_{k=0}^{\infty}\frac{z^k}{\|z^k\|_{\mathcal{M}_{n}}^2}\otimes D_{T}T^{k} x,
\end{equation*}
such that $\mathcal{N}=\overline{\text{ran}V},$ $E=\overline{\text{ran}D_{T}}$ and $VT=(S^{*}_{n, E}|_{\mathcal{N}})V$, where $D_{T}$ is the defect operator of the $n$-hypercontraction $T.$
One of the main results in \cite{CD1}, Theorem 1.14, shows that two Cowen-Douglas operators are unitarily equivalent if and only if their induced holomorphic complex bundles are equivalent as Hermitian holomorphic vector bundles. Therefore, for a holomorphic frame $\gamma=\{\gamma_{1},\ldots,\gamma_{m}\}$ of $\mathcal{E}_{T}$, we have that
$$V\gamma_{i}(w)=\sum\limits_{k=0}^{\infty}\frac{z^k}{\|z^k\|_{\mathcal{M}_{n}}^2}\otimes D_{T}T^{k} \gamma_{i}(w)=
\sum\limits_{k=0}^{\infty}\frac{z^k}{\|z^k\|_{\mathcal{M}_{n}}^2}\otimes D_{T}w^{k} \gamma_{i}(w)=K_{n}(z, \overline{w})\otimes D_{T}\gamma_{i}(w),\quad 1\leq i\leq m,$$
and the eigenvector bundle $\ker(S^*_{n, E}|_{\mathcal{N}}-w)$ can be expressed as $$\ker(S^{*}_{n, E}|_{\mathcal{N}}-w)=\bigvee\{K_{n}(\cdot, \overline{w})\otimes D_{T}\gamma_{i}(w):1\leq i\leq m\}.$$

Since the eigenspaces $$\ker(T-w)=\bigvee\{\gamma_{1}(w),\ldots, \gamma_{m}(w)\}\quad\text{and} \quad\ker(S^{*}_{n}-w)=\bigvee\{K_{n}(\cdot, \overline{w})\}$$ depend analytically on $w\in\mathbb{D}$, we can say the same for the subspace $\mathcal{E}_{D_{T}}(w)=\bigvee\{D_{T}\gamma_{i}(w):1\leq i\leq m\}$. Therefore,
$\mathcal{E}_{D_{T}}=\{(w,x)\in\mathbb{D}\times\mathcal{E}(w):x\in\mathcal{E}_{D_{T}}(w)\}$ is an $m$-dimensional Hermitian holomorphic vector bundle over $\mathbb{D}$ with the natural projection $\pi, \pi(w,x)=w,$ and $\{D_{T}\gamma_{1},\ldots, D_{T}\gamma_{m}\}$ is a holomorphic frame of $\mathcal{E}_{D_{T}}$.
Hence, $\mathcal{E}_{T}$ and $\mathcal{E}_{S^{*}_{n}}\otimes\mathcal{E}_{D_{T}}$ are equivalent as Hermitian holomorphic vector bundles.
\end{proof}

For an $n$-hypercontraction $T\in\mathcal{B}_{m}(\mathbb{D})$, let $\mathcal{E}_{D_{T}}$ be the holomorphic complex bundle \emph{induced} by the defect operator $D_{T}$ of $T$. The precise definition is as follows:

\begin{definition}\label{lemma2}
If $T\in\mathcal{B}_{m}(\mathbb{D})$ is an $n$-hypercontraction and $\gamma=\{\gamma_{1},\ldots,\gamma_{m}\}$ is a holomorphic frame of $\mathcal{E}_{T}$,
we say that $\mathcal{E}_{D_{T}}=\{(w,x)\in\mathbb{D}\times\mathcal{E}_{D_{T}}(w):x\in\mathcal{E}_{D_{T}}(w)\}$ is a Hermitian holomorphic vector bundle over $\mathbb{D}$ of rank $m$ \emph{induced} by the defect operator $D_{T}$ of $T$,
where $\mathcal{E}_{D_{T}}(w)=\bigvee\{D_{T}\gamma_{i}(w):1\leq i\leq m\}$.
\end{definition}

For an $n$-hypercontraction in $\mathcal{B}_{1}(\mathbb{D})$, Theorem 1 in \cite{CM} and Theorem 3.4 in \cite{JJ2022} show that the ratio of the metrics of holomorphic line bundles is a similarity invariant for the backward shift operators on the Hardy and weighted Bergman spaces. More generally, for the similarity of operators in $\mathcal{B}_{m}(\mathbb{D})$, we use the multiplier algebra and Lemma \ref{lemma3.2} to give the following theorem, the main theorem of this section, that provides a similarity criteria for an $n$-hypercontraction $T\in\mathcal{B}_{m}(\mathbb{D})$ in terms of the metric $h_{\mathcal{E}_{D_{T}}}$ of the holomorphic complex bundle $\mathcal{E}_{D_{T}}$:

\begin{theorem}\label{t1}
For an $n$-hypercontraction $T\in\mathcal{B}_{m}(\mathbb{D})$, the following statements are equivalent:
\begin{itemize}
  \item [(1)] The operator $T$ is similar to the backward shift operator $S^{*}_{n, \mathbb{C}^{m}}$ on $\mathcal{M}_{n, \mathbb{C}^{m}}.$
  \item [(2)] There exist constants $C_1$ and $C_2$ such that the metric matrix $h_{\mathcal{E}_{D_{T}}}(w)$ of $\mathcal{E}_{D_{T}}$ in some holomorphic frame satisfies
  $$0<C_{1}I_{m}\leq h_{\mathcal{E}_{D_{T}}}(w)\leq C_{2}I_m,\quad w \in \mathbb{D}.$$
  \item [(3)] There exists a holomorphic frame $\widetilde{\gamma}=\{\widetilde{\gamma}_{1},\ldots,\widetilde{\gamma}_{m}\}$ of $\mathcal{E}_{D_{T}}$ such that each $\Vert\widetilde{\gamma}_{i}(w)\Vert$ is uniformly bounded on $\mathbb{D}$, and that the metric matrix $h_{\mathcal{E}_{D_{T}}}(w)$ under $\widetilde{\gamma}$ satisfies
      $$0<A_{1}\leq \det h_{\mathcal{E}_{D_{T}}}(w)\leq A_{2},\quad w \in \mathbb{D},$$
      for some constants $A_1$ and $A_2$.
\end{itemize}
\end{theorem}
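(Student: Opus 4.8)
The plan is to establish the cycle of implications $(1)\Rightarrow(2)\Rightarrow(3)\Rightarrow(1)$, using Lemma \ref{lemma3.2} to reduce the geometry to the bundle $\mathcal{E}_{D_T}$ and Lemma \ref{lem3} to pass between the three equivalent formulations of "the metric is bounded and bounded below." Since Lemma \ref{lem3} already gives the equivalence of conditions (2) and (3) here (applied to the rank-$m$ bundle $\mathcal{E}=\mathcal{E}_{D_T}$ with frame $\{D_T\gamma_1,\ldots,D_T\gamma_m\}$), the real content is the equivalence of (1) with (2). So I would state up front that $(2)\Leftrightarrow(3)$ is immediate from Lemma \ref{lem3}, and concentrate on $(1)\Leftrightarrow(2)$.

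For $(2)\Rightarrow(1)$: By Lemma \ref{lemma3.2}, $\mathcal{E}_T \sim_u \mathcal{E}_{S^*_n}\otimes \mathcal{E}_{D_T}$, so the metric matrices satisfy $h_{\mathcal{E}_T}(w) = h_{\mathcal{E}_{S^*_n}}(w)\otimes h_{\mathcal{E}_{D_T}}(w) = K_n(w,\overline{w})\, h_{\mathcal{E}_{D_T}}(w)$, where I use that $S^*_n$ is a rank-one bundle with scalar metric $K_n(w,\overline w)=(1-|w|^2)^{-n}$. The candidate bundle isomorphism is the map $\mathcal{E}_{S^*_n}\otimes\mathbb{C}^m \to \mathcal{E}_{S^*_n}\otimes\mathcal{E}_{D_T}$ sending the frame $\{K_n(\cdot,\overline w)\otimes\sigma_i\}$ of the former bundle to $\{K_n(\cdot,\overline w)\otimes D_T\gamma_i(w)\}$. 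Concretely, I would invoke Theorem \ref{book}: using condition (2), the operator-valued function $\Phi(w):\mathbb{C}^m\to E$, $\Phi(w)\sigma_i=D_T\gamma_i(w)$, satisfies $\Phi(w)^*\Phi(w)=h_{\mathcal{E}_{D_T}}(w)\geq C_1 I_m$, and by Theorem \ref{Fuhrmann} (applicable since $\dim\mathbb{C}^m<\infty$ and $\dim E$ may be infinite) $\Phi$ is left-invertible in $H^\infty$; equivalently, this is exactly $(2)\Rightarrow(1)$ of Lemma \ref{lem3}. Then $X:=M_\Phi^*$ maps $K_n(\cdot,\overline w)\otimes D_T\gamma_i(w)$ to $K_n(\cdot,\overline w)\otimes\sigma_i$ after taking adjoints appropriately, and is bounded with bounded inverse. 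Since $X$ intertwines $S^*_{n,E}|_{\mathcal N}$ with $S^*_{n,\mathbb{C}^m}$ (both act as the backward shift in the $z$-variable and $X$ touches only the coefficient space), and since $T\sim_u S^*_{n,E}|_{\mathcal N}$ by Theorem \ref{model}, composing gives $T\sim_s S^*_{n,\mathbb{C}^m}$.

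For $(1)\Rightarrow(2)$: Suppose $T = Y^{-1}S^*_{n,\mathbb{C}^m}Y$ for a bounded invertible $Y$. The eigenvector bundle $\mathcal{E}_{S^*_{n,\mathbb{C}^m}}$ has frame $\{K_n(\cdot,\overline w)\otimes\sigma_i\}_{i=1}^m$ with metric matrix $K_n(w,\overline w)I_m$; the similarity $Y$ carries $\ker(T-w)$ isomorphically onto $\ker(S^*_{n,\mathbb{C}^m}-w)$, so $\{Y\gamma_i(w)\}$ and $\{K_n(\cdot,\overline w)\otimes\sigma_i\}$ span the same space, and $Y$ induces a bounded-below, bounded-above comparison of the two metrics up to the scalar factor $K_n(w,\overline w)$. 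On the other hand, from Lemma \ref{lemma3.2} the metric of $\mathcal{E}_T$ in the frame $\{\gamma_i\}$ equals $K_n(w,\overline w)\, h_{\mathcal{E}_{D_T}}(w)$. Dividing out the common factor $K_n(w,\overline w)$, which is harmless since it is the same scalar on both sides, yields $c_1 I_m \leq h_{\mathcal{E}_{D_T}}(w) \leq c_2 I_m$, i.e.\ condition (2). Here I should be a little careful: $Y$ need not be a multiplier, but the standard fact (used already in the transition arguments of \cite{DKT,KT}) is that a similarity between Cowen-Douglas operators induces a holomorphic bundle map that is bounded above and below in the fibre metrics; I would spell this out via $\langle Y\gamma_j(w),Y\gamma_i(w)\rangle$ being comparable to $\langle\gamma_j(w),\gamma_i(w)\rangle$ uniformly in $w$.

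The main obstacle is the bookkeeping in $(1)\Rightarrow(2)$: keeping straight which Hilbert space each bundle lives in (the abstract $\mathcal H$, the model space $\mathcal{M}_{n,E}$, and $\mathcal{M}_{n,\mathbb{C}^m}$), and verifying that the similarity $Y$ and the unitary $V$ of Lemma \ref{lemma3.2} really do descend to a uniform two-sided bound on $h_{\mathcal{E}_{D_T}}$ rather than just on $h_{\mathcal{E}_T}$ — that is, that the scalar factor $K_n(w,\overline w)$ can be cleanly cancelled. Once that cancellation is justified, everything else is a direct appeal to Lemma \ref{lem3}, Theorem \ref{book}, Theorem \ref{Fuhrmann}, and Lemma \ref{lemma3.2}.
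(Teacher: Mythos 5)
Your proposal is correct and follows essentially the same route as the paper: Lemma \ref{lemma3.2} for the tensor decomposition $\mathcal{E}_{T}\sim_{u}\mathcal{E}_{S^{*}_{n}}\otimes\mathcal{E}_{D_{T}}$, Lemma \ref{lem3} for $(2)\Leftrightarrow(3)$, and the multiplier machinery of Theorems \ref{book} and \ref{Fuhrmann} to convert the two-sided metric bound into a bounded invertible intertwiner. The one point to tighten in your $(1)\Rightarrow(2)$ is that ``spanning the same space'' is not by itself enough (two frames of the same fibre can have wildly different Gram matrices); you must take the specific pullback frame $\gamma_i(w):=Y^{-1}\bigl(K_n(\cdot,\overline{w})\otimes\sigma_i\bigr)$ of $\mathcal{E}_T$, so that the Gram matrix of $\{Y\gamma_i(w)\}$ is literally $(1-|w|^2)^{-n}I_m$; then your comparison $\langle Y\gamma_j(w),Y\gamma_i(w)\rangle\asymp\langle\gamma_j(w),\gamma_i(w)\rangle$ and cancellation of the scalar factor give $(2)$ for the induced frame $\{D_T\gamma_i\}$ of $\mathcal{E}_{D_T}$, which is exactly the frame $\widetilde{\gamma}$ the paper's proof constructs via $X(K_n(\cdot,\overline{w})\otimes\sigma_i)=K_n(\cdot,\overline{w})\otimes\widetilde{\gamma}_i(w)$.
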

\begin{proof}
Since $T\in\mathcal{B}_{m}(\mathbb{D})$ is an $n$-hypercontraction, by Lemma \ref{lemma3.2}, we know that $$\mathcal{E}_{D_{T}}=\{(w,x)\in\mathbb{D}\times\mathcal{E}_{D_{T}}(w):x\in\mathcal{E}_{D_{T}}(w)\}$$ is a Hermitian holomorphic vector bundle over $\mathbb{D}$ of rank $m$, $\{D_{T}\gamma_{1},\ldots, D_{T}\gamma_{m}\}$ is a holomorphic frame of $\mathcal{E}_{D_{T}}$ for every holomorphic frame $\gamma=\{\gamma_{1},\ldots,\gamma_{m}\}$ of $\mathcal{E}_{T}$,
and $\mathcal{E}_{T}\sim_{u}\mathcal{E}_{S^{*}_{n}}\otimes \mathcal{E}_{D_{T}}$.
Since the equivalence $(2)\Leftrightarrow(3)$ was already explained above in Lemma \ref{lem3}, it remains to show $(3)\Rightarrow(1)\Rightarrow(2)$ in order to complete the proof.

$(3)\Rightarrow(1)$: Using the assumption that $\Vert D_{T}\gamma_{i}(w)\Vert, 1\leq i\leq m,$ are uniformly bounded on $\mathbb{D}$,
we first define a function $F\in H^{\infty}_{\mathbb{C}^{m}\rightarrow E}(\mathbb{D})$ by $$F(w)(\sigma_i)= D_{T}\gamma_{i}(w),\quad 1\leq i\leq m,$$
where $\{\sigma_{i}\}_{i=1}^{m}$ is an orthonormal basis of $\mathbb{C}^{m}$ and $E=\bigvee\limits_{w\in\mathbb{D}}\{D_{T}\gamma_i(w):1\leq i\leq m\}$.
Then by Lemma \ref{lem3}, there exists a function $G\in H^{\infty}_{E\rightarrow\mathbb{C}^{m}}(\mathbb{D})$ such that
$$G(w)F(w)=I_{\mathbb{C}^{m}},\quad w \in \mathbb{D}.$$
Next we let $F^{\#}(w):=F(\overline{w})$ and $G^{\#}(w):=G(\overline{w})$ and note that
$$(F^{\#})^ * \in H^{\infty}_{E \rightarrow\mathbb{C}^{m}}(\mathbb{D})=\text{Mult}(\mathcal{M}_{n,E},\mathcal{M}_{n,\mathbb{C}^{m}})\quad\text{and}\quad
(G^{\#})^ * \in H^{\infty}_{\mathbb{C}^{m} \rightarrow E}(\mathbb{D})=\text{Mult}(\mathcal{M}_{n,\mathbb{C}^{m}},\mathcal{M}_{n, E} ).$$
Then from Theorem \ref{book}, we know that for $1 \leq i \leq m$,
$$M^*_{(F^{\#})^*}(K_{n}(\cdot,\overline{w}) \otimes \sigma_{i})=K_{n}(\cdot,\overline{w}) \otimes F^{\#}(\overline{w})(\sigma_{i})=K_{n}(\cdot,\overline{w}) \otimes F(w)(\sigma_{i})=K_{n}(\cdot,\overline{w}) \otimes D_{T}\gamma_{i}(w),$$
and
$$M^{*}_{(G^{\#})^{*}}M^{*}_{(F^{\#})^{*}}(K_{n}(\cdot,\overline{w}) \otimes \sigma_{i})=M^{*}_{(G^{\#})^{*}}(K_{n}(\cdot,\overline{w}) \otimes F(w)(\sigma_{i}))=K_{n}(\cdot,\overline{w}) \otimes G(w)F(w)(\sigma_{i})=K_{n}(\cdot,\overline{w}) \otimes \sigma_{i}.$$
Hence, $$M^{*}_{(G^{\#})^{*}}M^{*}_{(F^{\#})^{*}}=I_{\mathcal{M}_{n,\mathbb{C}^{m}}}.$$

Since $\mathcal{M}_{n,\mathbb{C}^{m}}=\bigvee\limits_{w\in\mathbb{D}}\{K_{n}(\cdot,\overline{w}) \otimes \sigma_{i}:1\leq i\leq m\}$ and the operator $M^{*}_{(F^{\#})^{*}}$ is left-invertible, we conclude that $\text{ran }M^{*}_{(F^{\#})^{*}}=\bigvee\limits_{w\in\mathbb{D}}\{K_{n}(\cdot,\overline{w})\otimes  D_{T}\gamma_{i}(w):1\leq i\leq m\}$.
Therefore, the operator $$X:=M^{*}_{(F^{\#})^{*}}:\mathcal{M}_{n,\mathbb{C}^{m}}\rightarrow \bigvee\limits_{w\in\mathbb{D}}\{K_{n}(\cdot,\overline{w})\otimes  D_{T}\gamma_{i}(w):1\leq i\leq m\}$$
is invertible and for every $w \in \mathbb{D}$,
$$X(\mathcal{E}_{S^{*}_{n, \mathbb{C}^{m}}}(w))=(\mathcal{E}_{S^{*}_{n}}\otimes \mathcal{E}_{D_{T}})(w).$$
Considering the composite of the unitary operator $V$ from Lemma \ref{lemma3.2} with $X$, we see that the invertible operator $X^{-1}V$ satisfies $(X^{-1}V)\mathcal{E}_{T}(w)=\mathcal{E}_{S^{*}_{n, \mathbb{C}^{m}}}(w)$ for all $w\in\mathbb{D}$. It follows that $T$ is similar to $S^{*}_{n, \mathbb{C}^{m}}$.

$(1)\Rightarrow(2)$:
Since $T\sim_{s}S^{*}_{n, \mathbb{C}^{m}}$ and $\mathcal{E}_{T}\sim_{u}\mathcal{E}_{S^{*}_{n}}\otimes \mathcal{E}_{D_{T}}$, there exists a bounded invertible operator $X$ such that
$$X(\mathcal{E}_{S^{*}_{n, \mathbb{C}^{m}}}(w))=(\mathcal{E}_{S^{*}_{n}}\otimes \mathcal{E}_{D_{T}})(w),\quad w\in \mathbb{D}.$$
Thus, for some holomorphic frame $\{\widetilde{\gamma}_{1},\ldots,\widetilde{\gamma}_{m}\}$ of $\mathcal{E}_{D_{T}}$, we have
\begin{equation}\label{21}
X(K_{n}(\cdot,\overline{w})\otimes \sigma_{i})=K_{n}(\cdot,\overline{w})\otimes \widetilde{\gamma}_{i}(w),\quad 1\leq i\leq m,
\end{equation}
where $\{\sigma_{i}\}_{i=1}^{m}$ is an orthonormal basis of $\mathbb{C}^{m}.$
Since $X$ is bounded, $\Vert \widetilde{\gamma}_{i}(w)\Vert$ is uniformly bounded on $\mathbb{D}$ for each $1\leq i\leq n$, and then
$$h_{\mathcal{E}_{D_{T}}}(w)=\left(\langle \widetilde{\gamma}_{j}(w),\widetilde{\gamma}_{i}(w)\rangle\right)_{i,j=1}^{m}\leq C_{2}I_{m},\quad w\in\mathbb{D},$$
for some $C_{2}>0$.

Now define a function $F\in H^{\infty}_{\mathbb{C}^{m}\rightarrow E}(\mathbb{D})$ as
$$F(w)\sigma_{i}=\widetilde{\gamma}_{i}(w),\quad 1\leq i \leq m,$$
where $E=\bigvee\limits_{w\in\mathbb{D}}\{\widetilde{\gamma}_{i}(w):1\leq i\leq m\}$.
Obviously, the function $F^{\#}$ defined on $\mathbb{D}$ as $F^{\#}(w):=F(\overline{w})$ is such that
$(F^{\#})^*\in \text{Mult}(\mathcal{M}_{n,E},\mathcal{M}_{n,\mathbb{C}^{m}}).$
Moreover, by Theorem \ref{book},
\begin{equation}\label{22}
M^*_{(F^{\#})^*}(K_{n}(\cdot,\overline{w})\otimes\sigma_{i})=K_{n}(\cdot,\overline{w}) \otimes F^{\#}(\overline{w})(\sigma_{i})=K_{n}(\cdot,\overline{w}) \otimes F(w)(\sigma_{i})=K_{n}(\cdot,\overline{w}) \otimes\widetilde{\gamma}_{i}(w),
\end{equation}
and $\text{ran }M^*_{(F^{\#})^*}=\bigvee\limits_{w\in\mathbb{D}}\{K_{n}(\cdot,\overline{w})\otimes \widetilde{\gamma}_{i}(w): 1\leq i\leq n\}$. From  $\mathcal{M}_{n}=\bigvee\limits_{w\in\mathbb{D}}K_{n}(\cdot,\overline{w})$, (\ref{21}) and (\ref{22}), we see that
$X=M^*_{(F^{\#})^*}$.
Since $X$ is invertible,
for every $h=\lambda K_{n}(\cdot, \overline{w})\in\mathcal{M}_{n}, \lambda\in\mathbb{C}$, $g \in \mathbb{C}^{m}$, and $w\in\mathbb{D},$ there exists a constant $\delta> 0$ such that
$$\langle X^*X (h \otimes g), h \otimes g \rangle=\Big\langle M_{(F^{\#})^*}M^*_{(F^{\#})^*}(h \otimes g), h \otimes g \Big \rangle=\Vert h\Vert^2 \langle F^*(w)F(w)g, g \rangle\geq\delta^2 \|g\|^2\|h\|^2.$$
It follows that $h_{\mathcal{E}_{D_{T}}}(w)=F^*(w)F(w)=\left(\langle \widetilde{\gamma}_{j}(w),\widetilde{\gamma}_{i}(w)\rangle\right)_{i,j=1}^{m}\geq C_{1}I_{m}$ for all $w\in\mathbb{D}$ and $C_{1}:=\delta^2.$
\end{proof}

Theorem 2.4 in \cite{DKT} and Theorem 2.3 in \cite{HJK} use the curvature of holomorphic complex bundles to characterize the similarity of $n$-hypercontractions. Using Theorem \ref{t1}, we are able to give the following quick, alternative proof:

\begin{corollary}\label{lemma3}
Let $T\in\mathcal{B}_{m}(\mathbb{D})$ be an $n$-hypercontraction. Then $T$ is similar to the backward shift operator $S^{*}_{n, \mathbb{C}^{m}}$ on $\mathcal{M}_{n, \mathbb{C}^{m}}$ if and only if there exists a bounded subharmonic function $\varphi$ over $\mathbb{D}$ such that
\begin{equation*}\label{3.3}
\text{trace}\;\mathcal{K}_{S^{*}_{n, \mathbb{C}^{m}}}(w)-\text{trace}\;\mathcal{K}_{T}(w)=\frac{\partial^{2}}{\partial w\partial\overline{w}}\varphi(w), \quad w\in\mathbb{D}.
\end{equation*}

\end{corollary}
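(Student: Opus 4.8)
I would read the identity in the corollary as a geometric restatement of condition (3) in Theorem \ref{t1}, via the classical formula
$$
\text{trace}\,\mathcal{K}_{\mathcal{E}}(w)=-\frac{\partial^{2}}{\partial w\,\partial\overline{w}}\log\det h_{\mathcal{E}}(w),
$$
valid for any Hermitian holomorphic vector bundle $\mathcal{E}$ and any holomorphic frame (a change of holomorphic frame multiplies $\det h_{\mathcal{E}}$ by $|\det\Theta|^{2}$ for a holomorphic $\mathrm{GL}$-valued $\Theta$, and $\log|\det\Theta|^{2}$ is harmonic, so the right-hand side is frame-independent). By Lemma \ref{lemma3.2}, $\mathcal{E}_{T}\sim_{u}\mathcal{E}_{S^{*}_{n}}\otimes\mathcal{E}_{D_{T}}$ with $\mathcal{E}_{S^{*}_{n}}$ a line bundle and $\operatorname{rank}\mathcal{E}_{D_{T}}=m$, so $\det h_{\mathcal{E}_{T}}=\bigl(h_{\mathcal{E}_{S^{*}_{n}}}\bigr)^{m}\det h_{\mathcal{E}_{D_{T}}}$, while $\mathcal{E}_{S^{*}_{n,\mathbb{C}^{m}}}=\mathcal{E}_{S^{*}_{n}}\otimes(\mathbb{D}\times\mathbb{C}^{m})$ gives $\det h_{\mathcal{E}_{S^{*}_{n,\mathbb{C}^{m}}}}=\bigl(h_{\mathcal{E}_{S^{*}_{n}}}\bigr)^{m}$. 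Applying $-\partial^{2}/\partial w\partial\overline{w}\log$ to the quotient cancels the $h_{\mathcal{E}_{S^{*}_{n}}}$-factor and yields
$$
\text{trace}\,\mathcal{K}_{S^{*}_{n,\mathbb{C}^{m}}}(w)-\text{trace}\,\mathcal{K}_{T}(w)=\frac{\partial^{2}}{\partial w\,\partial\overline{w}}\log\det h_{\mathcal{E}_{D_{T}}}(w).
$$
Thus the distinguished candidate potential is $\varphi:=\log\det h_{\mathcal{E}_{D_{T}}}$, computed in any holomorphic frame of $\mathcal{E}_{D_{T}}$; for this $\varphi$ the corollary's identity holds identically, and any other solution differs from it by a harmonic function.

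Next I would record that $\varphi$ is subharmonic. By Definition \ref{lemma2}, $\mathcal{E}_{D_{T}}$ is a holomorphic sub-bundle of the trivial Hermitian bundle $\mathbb{D}\times E$ with the induced metric; the trivial bundle is flat, so the curvature-decreasing property of holomorphic sub-bundles gives $\mathcal{K}_{\mathcal{E}_{D_{T}}}(w)\le 0$, whence $\text{trace}\,\mathcal{K}_{\mathcal{E}_{D_{T}}}(w)\le 0$ and $\partial^{2}\varphi/\partial w\partial\overline{w}\ge 0$ (equivalently, $\log\|s(w)\|^{2}$ is plurisubharmonic for every holomorphic section $s$ of $\mathbb{D}\times E$, and $\det h_{\mathcal{E}_{D_{T}}}$ is a Gram determinant). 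The forward implication is then immediate: if $T\sim_{s}S^{*}_{n,\mathbb{C}^{m}}$, Theorem \ref{t1}(3) supplies a holomorphic frame of $\mathcal{E}_{D_{T}}$ with $0<A_{1}\le\det h_{\mathcal{E}_{D_{T}}}(w)\le A_{2}$, so $\varphi=\log\det h_{\mathcal{E}_{D_{T}}}$ in that frame is bounded, is subharmonic by the above, and satisfies the stated identity.

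For the converse, suppose a bounded subharmonic $\varphi$ obeying the identity is given. Starting from the frame $\{D_{T}\gamma_{1},\dots,D_{T}\gamma_{m}\}$ of $\mathcal{E}_{D_{T}}$ from Lemma \ref{lemma3.2}, the real function $\log\det h_{\mathcal{E}_{D_{T}}}-\varphi$ is harmonic on $\mathbb{D}$, hence equals $\operatorname{Re}g$ for some $g\in\text{Hol}(\mathbb{D})$; replacing the frame by $\{e^{-g/2}D_{T}\gamma_{1},D_{T}\gamma_{2},\dots,D_{T}\gamma_{m}\}$ — a holomorphic change of frame by $\operatorname{diag}(e^{-g/2},1,\dots,1)$ — arranges $\log\det h_{\mathcal{E}_{D_{T}}}=\varphi$, so that $\det h_{\mathcal{E}_{D_{T}}}$ is now bounded above and below. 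It then remains to choose a holomorphic frame of $\mathcal{E}_{D_{T}}$ in which, \emph{simultaneously}, $\det h_{\mathcal{E}_{D_{T}}}$ is bounded above and below \emph{and} each frame vector has uniformly bounded norm, so that Theorem \ref{t1}(3)$\Rightarrow$(1) applies.

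This last upgrade — from control of the determinant to control of the entire metric matrix $h_{\mathcal{E}_{D_{T}}}$ — is the step I expect to be the main obstacle. When $m=1$ it is vacuous, since $\det h_{\mathcal{E}_{D_{T}}}=h_{\mathcal{E}_{D_{T}}}$; for general $m$ it is precisely the content that \cite{DKT,HJK} establish in the curvature formulation. I would approach it through the sub-bundle structure of $\mathcal{E}_{D_{T}}\subset\mathbb{D}\times E$: the portion of a frame vector that grows without bound must, asymptotically, lie in the span of the remaining frame vectors, and can therefore be removed by a holomorphic correction — extracted from the division/corona machinery available on $\mathcal{M}_{n}$ (Theorem \ref{Fuhrmann}) — without altering $\det h_{\mathcal{E}_{D_{T}}}$. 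Once such a frame is produced, Theorem \ref{t1} yields $T\sim_{s}S^{*}_{n,\mathbb{C}^{m}}$ and the equivalence is complete.
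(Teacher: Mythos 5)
Your forward direction is correct and is essentially the paper's argument: both take $\varphi=\log\det h_{\mathcal{E}_{D_T}}=\log h_{\det\mathcal{E}_{D_T}}$ in the frame supplied by Theorem \ref{t1}(3), and both use $\operatorname{trace}\mathcal{K}_{S^{*}_{n,\mathbb{C}^m}}-\operatorname{trace}\mathcal{K}_{T}=-\operatorname{trace}\mathcal{K}_{\mathcal{E}_{D_T}}=-\mathcal{K}_{\det\mathcal{E}_{D_T}}$. Your justification of subharmonicity via the curvature-decreasing property of the sub-bundle $\mathcal{E}_{D_T}\subset\mathbb{D}\times E$ is in fact more explicit than what the paper writes. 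Your normalization step in the converse (absorbing the harmonic function $\log\det h_{\mathcal{E}_{D_T}}-\varphi=\operatorname{Re}g$ into the frame by the change $\operatorname{diag}(e^{-g/2},1,\dots,1)$) is also correct and is the same device the paper applies to the determinant line bundle via $\tfrac{1}{\phi}\gamma$.

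The genuine gap is exactly where you flag it, and your proposed repair does not work as stated. After the normalization you have a frame in which $0<C_1\le\det h_{\mathcal{E}_{D_T}}(w)\le C_2$, but Theorem \ref{t1}(3) additionally requires each frame vector to have uniformly bounded norm, and a two-sided bound on the determinant alone does not control the metric matrix: the eigenvalues of $h_{\mathcal{E}_{D_T}}(w)$ could behave like $\lambda(w)$ and $1/\lambda(w)$ with $\lambda(w)\to\infty$, keeping the Gram determinant bounded while every frame (after any holomorphic change preserving the determinant up to harmonic corrections) has unbounded vectors. Your heuristic --- that the unbounded portion of a frame vector must asymptotically lie in the span of the others and can be removed by a corona-type division --- is precisely the assertion that needs proof; nothing in the boundedness of $\det h_{\mathcal{E}_{D_T}}$ forces that asymptotic alignment, and Theorem \ref{Fuhrmann} gives you a left inverse only once you already have a uniform lower bound on $F^*F$, which is what you are trying to establish. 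The paper does not close this step internally either: its converse invokes Theorem 3.2 of \cite{KT} (equivalently, the content of Theorem 2.4 in \cite{DKT} and Theorem 2.3 in \cite{HJK}), where the passage from a bounded subharmonic potential for the curvature deficit to an actual similarity is carried out by a separate harmonic-analysis argument. To make your proof complete you should either cite that result at this point, as the paper does, or reproduce its proof; the corollary is being presented as a repackaging of those theorems, not as a way to avoid them.
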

\begin{proof} Since $T\in\mathcal{B}_{m}(\mathbb{D})$ is an $n$-hypercontraction, by Lemma \ref{lemma3.2}, there exists a Hermitian holomorphic vector bundle $\mathcal{E}_{D_{T}}$ over $\mathbb{D}$ of rank $m$ such that $\mathcal{E}_{T}\sim_{u}\mathcal{E}_{S^{*}_{n}}\otimes \mathcal{E}_{D_{T}}$.

If $T$ is similar to $S^{*}_{n, \mathbb{C}^{m}}$, by Theorem \ref{t1}, there exist constants $A_1$, $A_2$, and a holomorphic frame $\gamma=\{\gamma_{1},\ldots,\gamma_{m}\}$ of $\mathcal{E}_{D_{T}}$ such that the metric $h_{\mathcal{E}_{D_{T}}}$ under $\gamma$ satisfies
$$0<A_{1}\leq \det h_{\mathcal{E}_{D_{T}}}(w)\leq A_{2},\quad w \in \mathbb{D}.$$
Setting $\det\mathcal{E}_{D_{T}}$ as the determinant bundle of $\mathcal{E}_{D_{T}}$, the metric on the determinant bundle is $h_{\det\mathcal{E}_{D_{T}}}=\det h_{\mathcal{E}_{D_{T}}},$ and we have
$$ \text{trace}\;\mathcal{K}_{S^{*}_{n, \mathbb{C}^{m}}}(w)-\text{trace}\;\mathcal{K}_{T}(w)=-\text{trace}\;\mathcal{K}_{\mathcal{E}_{D_{T}}}(w)
=-\mathcal{K}_{\det\mathcal{E}_{D_{T}}}(w)=\frac{\partial^{2}}{\partial w\partial\overline{w}}\log h_{\det\mathcal{E}_{D_{T}}}(w)
\geq0.$$
To complete the proof, one sets $\varphi(w):=\log h_{\text{det}\mathcal{E}_{D_{T}}}(w)$ on $\mathbb{D}$.

For the other implication, note that since $\mathcal{E}_T \sim_{u} \mathcal{E}_{S^{*}_{n}} \otimes \mathcal{E}_{D_{T}}$, we have
$$\text{trace}\;\mathcal{K}_T=\text{trace}\;\mathcal{K}_{S^{*}_{n, \mathbb{C}^{m}}}+\text{trace}\;\mathcal{K}_{\mathcal{E}_{D_{T}}}.$$
Thus,
$$\frac{\partial^{2}}{\partial w\partial\overline{w}}\varphi(w)=-\text{trace}\;\mathcal{K}_{\mathcal{E}_{D_{T}}}(w)
=-\mathcal{K}_{\det\mathcal{E}_{D_{T}}}(w)=\frac{\partial^{2}}{\partial w\partial\overline{w}}\log\Vert{\gamma}(w)\Vert^{2}$$ for a bounded subharmonic function $\varphi$ on $\mathbb{D}$ and a non-vanishing holomorphic frame ${\gamma}$ of the determinant bundle $\det\mathcal{E}_{D_{T}}$.
Now,
$u(w):=\log \left(\frac{\Vert{\gamma}(w)\Vert^{2}}{e^{\varphi(w)}}\right)^{\frac{1}{2}}$ is a real-valued harmonic function, and letting $\phi:=e^{u+iv}\in \text{Hol}(\mathbb{D}),$
where $v$ is the harmonic conjugate of $u$, we see that
$$|\phi(w)|=e^{u(w)}=\left(\frac{\Vert {\gamma}(w)\Vert^{2}}{e^{{\varphi}(w)}}\right)^{\frac{1}{2}}.$$
Hence,
$e^{{\varphi}(w)}=\left\Vert\frac{1}{\phi (w)}{\gamma}(w)\right\Vert^{2}$
and $\frac{1}{\phi}{\gamma}$ is a holomorphic frame of $\det\mathcal{E}_{D_{T}}$. Since $\varphi$ is bounded on $\mathbb{D}$,  there are constants $C_{1}$ and $C_{2}$ such that
$$0<C_{1}\leq e^{\varphi(w)}=\left \Vert\frac{1}{\phi(w)}\gamma(w)\right \Vert ^2\leq C_{2},\quad w\in\mathbb{D},$$
and therefore by Theorem \ref{t1} and Theorem 3.2 in \cite{KT}, $T\sim_{s}S^{*}_{n, \mathbb{C}^{m}}$.
\end{proof}

Two Cowen-Douglas operators are unitarily equivalent if and only if their corresponding holomorphic complex bundles are equivalent as Hermitian holomorphic vector bundles.  It is known from \cite{QL} that many holomorphic complex bundles with tensor products correspond to some Cowen-Douglas operator. Therefore, we use holomorphic complex bundles to give a similarity classification of Cowen-Douglas operators and obtain the following proposition. We omit the proof of it which is similar to those of Theorem \ref{t1} and Corollary \ref{lemma3}:

\begin{proposition}\label{3.6}
Let $T\in\mathcal{B}_{m}(\mathbb{D})$ and let $M_{z}^{*}\in\mathcal{B}_{1}(\mathbb{D})$ be the backward shift operator on a reproducing kernel Hilbert space $\mathcal{H}_{K}$ with $\text{Mult}(\mathcal{H}_{K})=H^{\infty}(\mathbb{D})$. If $\mathcal{E}_{T}\sim_{u}\mathcal{E}_{M_{z}^{*}}\otimes \mathcal{E}$ for some Hermitian holomorphic vector bundle $\mathcal{E}$ over $\mathbb{D}$ of rank $m$, then the following statements are equivalent:
\begin{itemize}
  \item [(1)] The operator $T$ is similar to the backward shift operator $M^{*}_{z, \mathbb{C}^{m}}$ on $\mathcal{H}_{K, \mathbb{C}^{m}}.$
  \item [(2)] There exist constants $C_1$ and $C_2$ such that the metric $h_{\mathcal{E}}(w)$ of $\mathcal{E}$ in some holomorphic frame satisfies
  $0<C_{1}I_{m}\leq h_{\mathcal{E}}(w)\leq C_{2}I_m$ for every $w \in \mathbb{D}.$
  \item [(3)] There exists a holomorphic frame $\gamma=\{\gamma_{1},\ldots,\gamma_{m}\}$ of $\mathcal{E}$ such that each $\Vert\gamma_{i}(w)\Vert, 1\leq i\leq m$, is uniformly bounded on $\mathbb{D}$, and there are constants $A_1$ and $A_2$ such that the metric $h_{\mathcal{E}}(w)$ under the holomorphic frame $\gamma$ satisfies $$0<A_{1}\leq \det h_{\mathcal{E}}(w)\leq A_{2},\quad w \in \mathbb{D}.$$
       In other words, the metric $h_{\det\mathcal{E}}$ of the determinant bundle $\det\mathcal{E}$ satisfies $0<A_{1}\leq h_{\det\mathcal{E}}(w)\leq A_{2}$ for every $w \in \mathbb{D}.$
  \item [(4)] There exists a bounded subharmonic function $\varphi$ over $\mathbb{D}$ such that
\begin{equation*}
m\mathcal{K}_{M_{z}^{*}}(w)-\text{trace}\;\mathcal{K}_{T}(w)=\frac{\partial^{2}}{\partial w\partial\overline{w}}\varphi(w), \quad w\in\mathbb{D}.
\end{equation*}
\end{itemize}
\end{proposition}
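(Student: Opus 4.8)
The plan is to run the proof on the same two tracks used for Theorem~\ref{t1} and Corollary~\ref{lemma3}, the only change being that the reproducing kernel $K$ and the identification $\text{Mult}(\mathcal{H}_{K,E_1},\mathcal{H}_{K,E_2})=H^{\infty}_{E_1\to E_2}(\mathbb{D})$ (valid precisely because $\text{Mult}(\mathcal{H}_K)=H^{\infty}(\mathbb{D})$) now play the roles that $\mathcal{M}_{n,E}$ and Agler's identification played there. First, $(2)\Leftrightarrow(3)$ is nothing but Lemma~\ref{lem3} applied verbatim to the bundle $\mathcal{E}$ and its metric $h_{\mathcal{E}}$; so only the links of $(1)$ and $(4)$ to these need to be established.

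For $(3)\Rightarrow(1)$ I would mimic $(3)\Rightarrow(1)$ of Theorem~\ref{t1}: with a holomorphic frame $\gamma=\{\gamma_1,\dots,\gamma_m\}$ of $\mathcal{E}$ having uniformly bounded $\Vert\gamma_i(w)\Vert$, set $E=\bigvee_{w}\{\gamma_i(w):1\le i\le m\}$ and define $F\in H^{\infty}_{\mathbb{C}^m\to E}(\mathbb{D})$ by $F(w)\sigma_i=\gamma_i(w)$. By the chain $(3)\Rightarrow(2)\Rightarrow(1)$ of Lemma~\ref{lem3}, $F$ is left-invertible in $H^{\infty}_{\mathbb{C}^m\to E}(\mathbb{D})$, say $G(w)F(w)=I_{\mathbb{C}^m}$. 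Passing to $F^{\#}(w):=F(\overline w)$ and $G^{\#}(w):=G(\overline w)$, one has $(F^{\#})^{*}\in\text{Mult}(\mathcal{H}_{K,E},\mathcal{H}_{K,\mathbb{C}^m})$ and $(G^{\#})^{*}\in\text{Mult}(\mathcal{H}_{K,\mathbb{C}^m},\mathcal{H}_{K,E})$, and Theorem~\ref{book} gives $M^{*}_{(F^{\#})^{*}}(K(\cdot,\overline w)\otimes\sigma_i)=K(\cdot,\overline w)\otimes\gamma_i(w)$ together with $M^{*}_{(G^{\#})^{*}}M^{*}_{(F^{\#})^{*}}=I_{\mathcal{H}_{K,\mathbb{C}^m}}$. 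Hence $X:=M^{*}_{(F^{\#})^{*}}$ is invertible from $\mathcal{H}_{K,\mathbb{C}^m}$ onto $\bigvee_{w}\{K(\cdot,\overline w)\otimes\gamma_i(w):1\le i\le m\}$ and carries $\mathcal{E}_{M^{*}_{z,\mathbb{C}^m}}(w)$ onto $(\mathcal{E}_{M_z^{*}}\otimes\mathcal{E})(w)$; composing $X^{-1}$ with a unitary $V$ implementing $\mathcal{E}_T\sim_{u}\mathcal{E}_{M_z^{*}}\otimes\mathcal{E}$ yields an invertible intertwiner, so $T\sim_{s}M^{*}_{z,\mathbb{C}^m}$. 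For $(1)\Rightarrow(2)$ I would reverse this, as in Theorem~\ref{t1}: similarity plus $\mathcal{E}_T\sim_{u}\mathcal{E}_{M_z^{*}}\otimes\mathcal{E}$ gives a bounded invertible $X$ with $X(K(\cdot,\overline w)\otimes\sigma_i)=K(\cdot,\overline w)\otimes\widetilde\gamma_i(w)$ for some holomorphic frame $\widetilde\gamma$ of $\mathcal{E}$; boundedness of $X$ forces $h_{\mathcal{E}}(w)\le C_2 I_m$, while recognizing $X=M^{*}_{(F^{\#})^{*}}$ with $F(w)\sigma_i=\widetilde\gamma_i(w)$ and testing $\langle X^{*}X(h\otimes g),h\otimes g\rangle$ on $h=\lambda K(\cdot,\overline w)$ with $X$ invertible gives $h_{\mathcal{E}}(w)=F^{*}(w)F(w)\ge\delta^{2}I_m$.

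For $(3)\Leftrightarrow(4)$ I would argue as in Corollary~\ref{lemma3}. Since $\mathcal{E}_T\sim_{u}\mathcal{E}_{M_z^{*}}\otimes\mathcal{E}$ and $\operatorname{rank}\mathcal{E}_{M_z^{*}}=1$, $\operatorname{rank}\mathcal{E}=m$, the tensor-product rule for curvature gives $\text{trace}\,\mathcal{K}_T=m\mathcal{K}_{M_z^{*}}+\text{trace}\,\mathcal{K}_{\mathcal{E}}$, so that $m\mathcal{K}_{M_z^{*}}(w)-\text{trace}\,\mathcal{K}_T(w)=-\text{trace}\,\mathcal{K}_{\mathcal{E}}(w)=-\mathcal{K}_{\det\mathcal{E}}(w)=\frac{\partial^{2}}{\partial w\partial\overline w}\log h_{\det\mathcal{E}}(w)$, where $h_{\det\mathcal{E}}=\det h_{\mathcal{E}}$. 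If $(3)$ holds then $\varphi:=\log h_{\det\mathcal{E}}$ is a bounded subharmonic function realizing $(4)$. Conversely, given such a bounded subharmonic $\varphi$, the displayed identity shows $\log\Vert\widehat\gamma\Vert^{2}-\varphi$ is harmonic for a non-vanishing holomorphic frame $\widehat\gamma$ of $\det\mathcal{E}$; writing it as $2\,\text{Re}\log\phi$ with $\phi\in\text{Hol}(\mathbb{D})$ non-vanishing (as in Corollary~\ref{lemma3}), $\widehat\gamma/\phi$ is a holomorphic frame of $\det\mathcal{E}$ with $\Vert\widehat\gamma/\phi\Vert^{2}=e^{\varphi}$ bounded above and away from $0$, which is the determinant-bundle form of $(3)$; by Lemma~\ref{lem3} (or Theorem~3.2 in \cite{KT}) this is equivalent to $(2)$, hence to $(3)$.

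The only genuinely delicate point I expect is the bookkeeping in $(3)\Rightarrow(1)$ and $(1)\Rightarrow(2)$: identifying the intertwiner with $M^{*}_{(F^{\#})^{*}}$, correctly tracking the $z\mapsto\overline z$ flip between the picture of functions on $\mathbb{D}$ and the picture of operators, and verifying that the range is exactly $\bigvee_{w}\{K(\cdot,\overline w)\otimes\gamma_i(w)\}$. The curvature computation in $(3)\Leftrightarrow(4)$ is routine once the tensor structure is used. Since every ingredient—Lemma~\ref{lem3}, Theorem~\ref{book}, and the multiplier identification—is already in place and is applied in exactly this fashion in Section~\ref{sec2}, no new difficulty arises, which is why the proof may be omitted.
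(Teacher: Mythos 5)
Your proposal is correct and follows exactly the route the paper intends: the paper omits the proof of Proposition~\ref{3.6}, stating only that it is similar to those of Theorem~\ref{t1} and Corollary~\ref{lemma3}, and your argument reproduces that plan faithfully --- $(2)\Leftrightarrow(3)$ from Lemma~\ref{lem3}, $(3)\Rightarrow(1)\Rightarrow(2)$ via the multiplier/corona construction with $M^{*}_{(F^{\#})^{*}}$, and $(3)\Leftrightarrow(4)$ via the determinant bundle and the harmonic-conjugate trick, with the hypothesis $\text{Mult}(\mathcal{H}_{K})=H^{\infty}(\mathbb{D})$ correctly identified as the ingredient replacing Agler's model. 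No gaps.
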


In Proposition \ref{3.6}, if the metric of the holomorphic complex bundle $\mathcal{E}$ under a holomorphic frame is a polynomial for $|w|^{2}$, with each coefficient being a non-negative real number, then we can construct numerous examples of contractive operators similar to the backward shift operators on the Hardy or weighted Bergman spaces. These examples satisfy the conditions of Theorem 1 in \cite{CM}.

\section{Similarity on weighted Dirichlet spaces}\label{sec3}
Shields, in \cite{SH}, provided a necessary and sufficient condition for the similarity of backward shift operators on Hilbert space using weight sequences.
From \cite{JKSX2022,QL}, we know when a tensor product of Hermitian holomorphic vector bundles corresponds to a bounded linear operator. The goal of this section is to use the holomorphic jet bundle to describe the similarity of backward shifts on weighted Dirichlet spaces.

Let $\mathcal{E}$ be a Hermitian holomorphic line bundle over $\mathbb{D}$ and let $\gamma$ be a non-vanishing holomorphic cross-section of $\mathcal{E}$. For $k=0, 1, 2, \cdots$, we associate to $\mathcal{E}$ a $(k+1)$-dimensional Hermitian holomorphic vector bundle $\mathcal{J}_k(\mathcal{E})$ called the holomorphic \emph{$k$-jet bundle} of $\mathcal{E}$. The work \cite{CD1,GG1973,DKK2014} are good references for $k$-jet bundles. Suppose that for each $w \in \mathbb{D}$, $\gamma(w),\gamma^{'}(w),\gamma^{(2)}(w),\ldots,\gamma^{(k)}(w)$ are independent. The $k$-jet bundle $\mathcal{J}_{k}(\mathcal{E})$ of the bundle $\mathcal{E}$ has an associated holomorphic frame $\mathcal{J}_{k}(\gamma)=\{\gamma,\gamma^{'},\gamma^{(2)},\ldots,\gamma^{(k)}\}$.
The metric $h_{\mathcal{E}}(w)=\langle\gamma(w), \gamma(w)\rangle$ for $\mathcal{E}$ induces a metric $\mathcal{J}_{k}(h_{\mathcal{E}})(w)$ for $\mathcal{J}_{k}(\mathcal{E})$ of the form
\begin{equation*}
\mathcal{J}_{k}(h_{\mathcal{E}})(w)=
\begin{pmatrix}
h_{\mathcal{E}}(w)&\frac{\partial}{\partial w}h_{\mathcal{E}}(w)&\cdots&\frac{\partial^{k}}{\partial w^{k}}h_{\mathcal{E}}(w)\\
\frac{\partial}{\partial \overline{w}}h_{\mathcal{E}}(w)&\frac{\partial^{2}}{\partial w\partial \overline{w}}h_{\mathcal{E}}(w)&\cdots&\frac{\partial^{k+1}}{\partial w^{k}\partial \overline{w}}h_{\mathcal{E}}(w)\\
\vdots&\vdots&\ddots&\vdots\\
\frac{\partial^{k}}{\partial \overline{w}^{k}}h_{\mathcal{E}}(w)&\frac{\partial^{k+1}}{\partial w\partial \overline{w}^{k}}h_{\mathcal{E}}(w)&\cdots&\frac{\partial^{2k}}{\partial w^{k}\partial \overline{w}^{k}}h_{\mathcal{E}}(w)
\end{pmatrix}.
\end{equation*}
Throughout the section, we denote by $D_{\alpha}^{*}$ and $D^{*}_{\alpha, E}$ the backward shift operators on weighted Dirichlet spaces $\mathcal{D}_{\alpha}$ and the spaces $\mathcal{D}_{\alpha, E}$, respectively, where $\alpha \in (0,1]$ and $E$ is a separable Hilbert space.  The reproducing kernel of $\mathcal{D}_{\alpha}$ is given by $K_{\alpha}(z,w)=\sum\limits_{i=0}^{\infty}\frac{z^{i}\overline{w}^{i}}{(i+1)^{\alpha}}, z,w\in\mathbb{D}$. In particular, $D_{1}^{*}$ is also denoted as $D^*$ and $\mathcal{D}_{1}$ as $\mathcal{D}$.

We will use the following result by M\"{u}ller in \cite{VM1988}, Theorem \ref{lem4}, to construct a defect operator corresponding to $T\in \mathcal{L}(\mathcal{H})$ with $\sum\limits_{s=1}^{\infty}\frac{\Vert T^{s}\Vert^{2}}{(s+1)^{\alpha}}\leq 1$ such that $T$ is unitary equivalent to the restriction of $D^{*}_{\alpha, E}$ on some invariant subspace:

\begin{lemma}\label{lemma4}
Let $T \in \mathcal{L}(\mathcal{H})$ and $\sum\limits_{s=1}^{\infty}\frac{\Vert T^{s}\Vert^{2}}{(s+1)^{\alpha}}\leq 1$ for some constant $\alpha\in(0,1]$. Then there exist a Hilbert space $E$ and an
$D^{*}_{\alpha, E}$-invariant subspace $\mathcal{N} \subseteq
\mathcal{D}_{\alpha, E}$ such that $T\sim_{u}D^{*}_{\alpha, E}|_{\mathcal{N}}$.
\end{lemma}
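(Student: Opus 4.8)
The plan is to obtain Lemma \ref{lemma4} as a direct translation of M\"{u}ller's Theorem \ref{lem4}, once the backward shift $D^{*}_{\alpha,E}$ is identified with a weighted backward shift $S^{*}_{\omega,E}$ whose weight sequence $\omega=\{\omega_n\}_{n\ge 1}$ is chosen so that the associated quantities $b_i=\prod_{j=1}^{i}\omega_j^{-2}$ are exactly $(i+1)^{-\alpha}$. (Here the generic weight sequence $\alpha=\{\alpha_i\}$ appearing in Theorem \ref{lem4} is not the parameter $\alpha\in(0,1]$ of the weighted Dirichlet space; I will take $\alpha_i:=\omega_i$.)

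First I would set up the identification. With respect to the orthonormal basis $e_n(z)=z^n/(n+1)^{\alpha/2}$ of $\mathcal{D}_{\alpha}$, which is consistent with the reproducing kernel $K_\alpha$ since $\Vert z^n\Vert^2_{\mathcal{D}_\alpha}=(n+1)^{\alpha}$, a one-line computation gives $M_z e_n=\bigl((n+2)/(n+1)\bigr)^{\alpha/2}e_{n+1}$, hence $D_{\alpha}^{*}e_0=0$ and $D_{\alpha}^{*}e_n=\omega_n e_{n-1}$ for $n\ge 1$, where $\omega_n:=\bigl((n+1)/n\bigr)^{\alpha/2}=(1+1/n)^{\alpha/2}$. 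Tensoring with an orthonormal basis of $E$ identifies $\mathcal{D}_{\alpha,E}$ unitarily with $l^2\otimes E$ and carries $D^{*}_{\alpha,E}$ to the weighted backward shift $S^{*}_{\omega,E}$ of Theorem \ref{lem4}. Since $\alpha\in(0,1]$, the sequence $\omega_n=(1+1/n)^{\alpha/2}$ is strictly decreasing and bounded below by $1$, so M\"{u}ller's hypothesis $\omega_1\ge\omega_2\ge\cdots>0$ holds.

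Next I would record the telescoping identity $b_i=\prod_{j=1}^{i}\omega_j^{-2}=\prod_{j=1}^{i}\bigl(j/(j+1)\bigr)^{\alpha}=(i+1)^{-\alpha}$. Thus the standing hypothesis $\sum_{s=1}^{\infty}\Vert T^{s}\Vert^{2}/(s+1)^{\alpha}\le 1$ is precisely the condition $\sum_{s=1}^{\infty}\Vert T^{s}\Vert^{2}b_s\le 1$ required in Theorem \ref{lem4}. Applying that theorem yields a Hilbert space $E$ and an $S^{*}_{\omega,E}$-invariant subspace $\mathcal{N}\subseteq l^2\otimes E$ with $T\sim_{u}S^{*}_{\omega,E}|_{\mathcal{N}}$; transporting $\mathcal{N}$ through the unitary $l^2\otimes E\cong\mathcal{D}_{\alpha,E}$ makes it a $D^{*}_{\alpha,E}$-invariant subspace and gives $T\sim_{u}D^{*}_{\alpha,E}|_{\mathcal{N}}$.

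There is no substantial obstacle: the content is entirely in Theorem \ref{lem4}, and the work here is bookkeeping. The two points deserving care are (i) using the \emph{ratios} $((n+1)/n)^{\alpha/2}$, not $(n+1)^{\alpha/2}$, as the weights, so that the moment normalization telescopes to $(s+1)^{-\alpha}$ rather than to a factorial; and (ii) verifying the monotonicity $\omega_1\ge\omega_2\ge\cdots$ so that Theorem \ref{lem4} applies verbatim. Both are elementary, which is why I would keep this proof short.
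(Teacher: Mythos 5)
Your proof is correct, and the bookkeeping checks out: with respect to the orthonormal basis $e_n=z^n/(n+1)^{\alpha/2}$ of $\mathcal{D}_\alpha$ one indeed gets $D_\alpha^*e_n=\bigl((n+1)/n\bigr)^{\alpha/2}e_{n-1}$, the products telescope to $b_i=\prod_{j=1}^{i}\bigl(j/(j+1)\bigr)^{\alpha}=(i+1)^{-\alpha}$, and the weights $(1+1/n)^{\alpha/2}$ are decreasing, so Theorem \ref{lem4} applies verbatim and transports to $\mathcal{D}_{\alpha,E}$. The paper, however, does not use Theorem \ref{lem4} as a black box; it re-runs M\"{u}ller's construction in this special case. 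It introduces coefficients $c_0=1$, $c_k=-\sum_{j=0}^{k-1}c_j/(k-j+1)^{\alpha}$ satisfying $\sum_{i=0}^{l}b_ic_{l-i}=0$ and $-b_k\le c_k\le 0$, deduces that $\sum_{k}c_k(T^k)^*T^k\ge 0$, and then defines the defect operator $D_T=\bigl(\sum_k c_k(T^k)^*T^k\bigr)^{1/2}$ together with the explicit isometry $Vx=\sum_n \frac{z^n}{\Vert z^n\Vert^2_{\mathcal{D}_\alpha}}\otimes D_TT^nx$ intertwining $T$ with $D^*_{\alpha,E}|_{\mathcal{N}}$, where $E=\overline{\text{ran}\,D_T}$ and $\mathcal{N}=\overline{\text{ran}\,V}$. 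The reason for this extra work is that everything downstream in the section --- the definition of $\mathcal{E}_{D_T}$, Lemma \ref{lemma5}, and Theorems \ref{thm1} and \ref{thm2} --- needs the concrete operator $D_T$ and the concrete unitary $V$, not merely the existence of a model. So your route proves the lemma as stated more economically, but to support the later results you would still have to unpack M\"{u}ller's proof and exhibit $E$, $D_T$, and $V$ explicitly, which is exactly what the paper's proof does.
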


\begin{proof}
Let $b_{0}=1$, $c_{0}=1$, and for $k\geq1,$ set $b_{k}=\frac{1}{(k+1)^{\alpha}}$, and  $c_k=-\sum\limits_{j=0}^{k-1} \frac{c_j}{(k-j+1)^{\alpha}}$. Then for a fixed positive integer $l\geq1$,
$\sum\limits_{i=0}^{l}b_{i}c_{l-i}=0$. Obviously, $-b_{1}=c_{1}<0$. Without losing generality, assume that $-b_{i}\leq c_{i}<0$ for $1\leq i\leq k-1$. Then
by the assumption $\sum\limits_{s=1}^{\infty}\frac{\Vert T^{s}\Vert^{2}}{(s+1)^{\alpha}}\leq 1$ and Lemma 2.1 in \cite{VM1988},
we know that $0\geq c_{k}\geq -b_{k}$ for every $k\geq 1,$ which gives
\begin{equation*}\label{27}\small
\left\langle\left(\sum\limits_{k=0}^{\infty}c_{k}(T^{k})^{*}T^{k}\right)x,x\right\rangle
\geq\Vert x\Vert^{2}+\sum\limits_{k=1}^{\infty}c_{k}\Vert T^{k}\Vert^{2}\Vert x\Vert^{2}
\geq\Vert x\Vert^{2}-\sum\limits_{k=1}^{\infty}\frac{\Vert T^{k}\Vert^{2}}{(k+1)^{\alpha}}\Vert x\Vert^{2}
\geq0,\quad x\in\mathcal{H}.
\end{equation*}
Hence, $\sum\limits_{k=0}^{\infty}c_{k}(T^{*})^{k}T^{k}$ is a positive operator.

Next, we define an operator $V: \mathcal{H}\rightarrow \mathcal{N}$ by
$$Vx=\sum\limits_{n=0}^{\infty}\frac{z^{n}}{\Vert z^{n}\Vert_{\mathcal{D}_{\alpha}}^{2}}\otimes D_{T} T^{n} x,\quad x\in\mathcal{H},$$
where $\mathcal{N}=\overline{\text{ran }V}$ and $D_{T}=\left(\sum\limits_{k=0}^{\infty}c_{k}(T^{k})^{*}T^{k}\right)^{1/2}$. Since an orthonormal basis for the space $\mathcal{D}_{\alpha}$ is given by $\left\{{e}_{n}=\frac{z^{n}}{\sqrt{(n+1)^{\alpha}}}\right\}_{n\geq0}$, setting $E:=\overline{\text{ran }D_{T}}$, we have for every $x\in\mathcal{H},$
$$\small \Vert Vx\Vert^{2}=\sum\limits_{n=0}^{\infty}b_{n}\left\langle \left(\sum\limits_{k=0}^{\infty}c_{k}(T^{k})^{*}T^{k}\right)T^{n} x, T^{n} x\right\rangle
=\sum\limits_{l=0}^{\infty}\left(\Vert T^{l}x\Vert^{2}\sum\limits_{n+k=l}b_{n}c_{k}\right)
=\Vert x\Vert^{2},$$
and
$$\small VTx=\sum\limits_{n=0}^{\infty}\frac{z^{n}}{\Vert z^{n}\Vert_{\mathcal{D}_{\alpha}}^{2}}\otimes D_{T} T^{n+1} x
=\sum\limits_{n=1}^{\infty}\frac{z^{n-1}}{\Vert z^{n-1}\Vert_{\mathcal{D}_{\alpha}}^{2}}\otimes D_{T} T^{n} x
=D^{*}_{\alpha,E}\left(\sum\limits_{n=0}^{\infty}\frac{z^{n}}{\Vert z^{n}\Vert_{\mathcal{D}_{\alpha}}^{2}}\otimes D_{T} T^{n} x\right)
=D^{*}_{\alpha,E}Vx.$$
Hence, $T\sim_{u}D^{*}_{\alpha,E}|_{\mathcal{N}}$ where $\mathcal{N}\subset \mathcal{D}_{\alpha, E}$ is a $D_{\alpha,E}^*$-invariant subspace.
\end{proof}
We will denote as $$D_{T}=\left(\sum\limits_{k=0}^{\infty}c_{k}(T^{k})^{*}T^{k}\right)^{1/2},$$ the \emph{defect operator} of the operator $T$ with $\sum\limits_{s=1}^{\infty}\frac{\Vert T^{s}\Vert^{2}}{(s+1)^{\alpha}}\leq 1$. Then we have the following lemma from Lemma \ref{lemma4}, whose proof is omitted.

\begin{lemma}\label{lemma5}
Let $T\in\mathcal{B}_{1}(\mathbb{D})$ be an operator with $\sum\limits_{s=1}^{\infty}\frac{\Vert T^{s}\Vert^{2}}{(s+1)^{\alpha}}\leq 1$ for some constant $\alpha\in(0,1]$, and let $\gamma$ be a non-vanishing holomorphic cross-section of $\mathcal{E}_{T}$.
Then $\mathcal{E}_{D_{T}}=\left\{(w,x)\in\mathbb{D}\times\mathcal{E}_{D_{T}}(w):
x\in\mathcal{E}_{D_{T}}(w)\right\}$ is a Hermitian holomorphic line bundle over $\mathbb{D}$ with the natural projection $\pi, \pi(w,x)=w,$
where $\mathcal{E}_{D_{T}}(w)=\bigvee\{D_{T}\gamma(w)\}$.

Moreover, $\mathcal{E}_{T}\sim_{u}\mathcal{E}_{D^{*}_{\alpha}}\otimes \mathcal{E}_{D_{T}}$ and $D_{T}\gamma$ is a non-vanishing holomorphic cross-section of $\mathcal{E}_{D_{T}}$.
\end{lemma}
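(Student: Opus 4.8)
The plan is to follow the proof of Lemma \ref{lemma3.2} verbatim, with the Agler model (Theorem \ref{model}) replaced by the M\"uller-type model already set up in Lemma \ref{lemma4}. First, since $\sum_{s=1}^{\infty}\frac{\Vert T^{s}\Vert^{2}}{(s+1)^{\alpha}}\leq 1$, Lemma \ref{lemma4} furnishes the Hilbert space $E=\overline{\text{ran }D_{T}}$, a $D^{*}_{\alpha,E}$-invariant subspace $\mathcal{N}\subseteq\mathcal{D}_{\alpha,E}$, and a unitary $V\colon\mathcal{H}\to\mathcal{N}$, $Vx=\sum_{n=0}^{\infty}\frac{z^{n}}{\Vert z^{n}\Vert_{\mathcal{D}_{\alpha}}^{2}}\otimes D_{T}T^{n}x$, with $VT=(D^{*}_{\alpha,E}|_{\mathcal{N}})V$.

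Next I would evaluate $V$ on the cross-section. Since $\gamma(w)\in\ker(T-w)$ we have $T^{n}\gamma(w)=w^{n}\gamma(w)$, and using $\Vert z^{n}\Vert_{\mathcal{D}_{\alpha}}^{2}=(n+1)^{\alpha}$ together with the formula for $K_{\alpha}$,
$$V\gamma(w)=\sum_{n=0}^{\infty}\frac{z^{n}w^{n}}{(n+1)^{\alpha}}\otimes D_{T}\gamma(w)=K_{\alpha}(\cdot,\overline{w})\otimes D_{T}\gamma(w),\qquad w\in\mathbb{D}.$$
Because $V$ is unitary, hence injective, while $\gamma(w)\neq 0$ and $K_{\alpha}(\cdot,\overline{w})\neq 0$, this forces $D_{T}\gamma(w)\neq 0$ for every $w\in\mathbb{D}$; this is the one point that has to be argued, and it is exactly what makes $\mathcal{E}_{D_{T}}(w)=\bigvee\{D_{T}\gamma(w)\}$ a genuine one-dimensional fibre rather than a rank-dropping object. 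Holomorphic dependence of $w\mapsto D_{T}\gamma(w)$ is immediate from the holomorphy of $\gamma$ and the boundedness of $D_{T}$, so $\mathcal{E}_{D_{T}}$ is a Hermitian holomorphic line bundle over $\mathbb{D}$ with the natural projection and $D_{T}\gamma$ is a non-vanishing holomorphic cross-section of it.

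Finally, the displayed identity shows $\ker(D^{*}_{\alpha,E}|_{\mathcal{N}}-w)=\bigvee\{K_{\alpha}(\cdot,\overline{w})\otimes D_{T}\gamma(w)\}$, whose non-vanishing holomorphic cross-section $K_{\alpha}(\cdot,\overline{w})\otimes D_{T}\gamma(w)$ is precisely the tensor product of the cross-section $K_{\alpha}(\cdot,\overline{w})$ of $\mathcal{E}_{D^{*}_{\alpha}}$ with the cross-section $D_{T}\gamma(w)$ of $\mathcal{E}_{D_{T}}$; hence $\mathcal{E}_{D^{*}_{\alpha,E}|_{\mathcal{N}}}=\mathcal{E}_{D^{*}_{\alpha}}\otimes\mathcal{E}_{D_{T}}$. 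Since $T\sim_{u}D^{*}_{\alpha,E}|_{\mathcal{N}}$, the Cowen--Douglas rigidity theorem (Theorem 1.14 in \cite{CD1}) yields $\mathcal{E}_{T}\sim_{u}\mathcal{E}_{D^{*}_{\alpha}}\otimes\mathcal{E}_{D_{T}}$, which completes the proof. I do not anticipate any serious obstacle: once injectivity of $V$ is invoked to rule out $D_{T}\gamma(w)=0$, everything else is bookkeeping parallel to Lemma \ref{lemma3.2}.
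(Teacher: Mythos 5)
Your proof is correct and is essentially the argument the paper intends: the paper omits the proof, noting only that the lemma follows from Lemma \ref{lemma4} in the same way that Lemma \ref{lemma3.2} follows from Agler's model theorem, which is precisely the route you take. Your explicit use of the injectivity of $V$ to rule out $D_{T}\gamma(w)=0$ is a worthwhile point that the paper's parallel argument in Lemma \ref{lemma3.2} glosses over.
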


The ratio of the metrics on holomorphic line bundles, as derived in the previous section, serves as a similarity invariant for the backward shift operators on the Hardy and weighted Bergman spaces. However, whether this extends to weighted Dirichlet spaces remains unclear. In what follows, we give a sufficient condition for certain bounded linear operators to be similar to the backward shift operator $D_{\alpha}^{*}$ on weighted Dirichlet space $\mathcal{D}_{\alpha}$ in terms of the holomorphic jet bundle.

\begin{theorem}\label{thm1}
Let $T\in\mathcal{B}_{1}(\mathbb{D})$ and  $\sum\limits_{s=1}^{\infty}\frac{\Vert T^{s}\Vert^{2}}{(s+1)^{\alpha}}\leq 1$ for some constant $\alpha\in(0,1]$.
If there exist constants $c_{2}> c_{1}>0$ that satisfy
$$c_{1}\leq h_{\mathcal{E}_{D_{T}}}(w)<\text{trace}\;\mathcal{J}_{1}(h_{\mathcal{E}_{D_{T}}})(w) \leq c_{2},\quad w\in\mathbb{D},$$
for the metric $h_{\mathcal{E}_{D_{T}}}$ of $\mathcal{E}_{D_{T}}$ and $\mathcal{E}_{D_{T}}(w) \subset \mathbb{C}^n$ for some $n>0$, then $T\sim_{s}D^{*}_{\alpha}.$
\end{theorem}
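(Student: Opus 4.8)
The plan is to use M\"uller's model theorem (Lemma \ref{lemma4}, from \cite{VM1988}) together with the jet-bundle structure to realize $T$ as a restriction of $D^*_{\alpha,E}$, and then to build an explicit bounded invertible intertwiner using the corona theorem of Kidane and Trent (Theorems \ref{rowcolumn} and \ref{tavan}). First, since $\sum_{s\ge 1}\Vert T^s\Vert^2/(s+1)^{\alpha}\le 1$, Lemma \ref{lemma5} gives that $\mathcal{E}_{D_T}$ is a Hermitian holomorphic line bundle over $\mathbb{D}$ with $\mathcal{E}_T\sim_u \mathcal{E}_{D^*_{\alpha}}\otimes\mathcal{E}_{D_T}$, and $D_T\gamma$ is a non-vanishing holomorphic cross-section of $\mathcal{E}_{D_T}$, where $\gamma$ is a non-vanishing holomorphic cross-section of $\mathcal{E}_T$. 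The hypothesis translates into a uniform two-sided bound: $c_1\le h_{\mathcal{E}_{D_T}}(w)=\Vert D_T\gamma(w)\Vert^2$ and $h_{\mathcal{E}_{D_T}}(w)+\partial_w\partial_{\overline w}h_{\mathcal{E}_{D_T}}(w)=\operatorname{trace}\mathcal{J}_1(h_{\mathcal{E}_{D_T}})(w)\le c_2$ on $\mathbb{D}$. In particular the derivative cross-section $(D_T\gamma)'(w)$ has $\Vert (D_T\gamma)'(w)\Vert^2=\partial_w\partial_{\overline w}h_{\mathcal{E}_{D_T}}(w)$ uniformly bounded, so both $D_T\gamma$ and its derivative are bounded holomorphic $E$-valued functions on $\mathbb{D}$.

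The key geometric point is that the eigenbundle of $D^*_{\alpha}$ on $\mathcal{D}_\alpha$ "sees" the $1$-jet: since $K_\alpha(z,w)=\sum_{i\ge 0} z^i\overline w^i/(i+1)^\alpha$, the eigenvectors $K_\alpha(\cdot,\overline w)$ and the tensor structure $\mathcal{E}_T\sim_u\mathcal{E}_{D^*_\alpha}\otimes\mathcal{E}_{D_T}$ mean, via Lemma \ref{lemma5}, that the frame for $\mathcal{E}_T$ realized inside $\mathcal{D}_{\alpha,E}$ is $w\mapsto K_\alpha(\cdot,\overline w)\otimes D_T\gamma(w)$. I would write $X$ as the adjoint of a multiplication operator: set $\Phi^{\#}(w)$ to be the operator $\mathbb{C}\oplus\mathbb{C}\to E$ (or $\mathbb{C}^n\to E$) built from the columns $D_T\gamma(\overline w)$ and $(D_T\gamma)'(\overline w)$ — this is where the jet bundle enters, because the appropriate "source" space is the $1$-jet bundle $\mathcal{J}_1$ of the trivial line bundle whose metric is comparable to $\mathcal{J}_1(h_{\mathcal{E}_{D_T}})$. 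Using Theorem \ref{book} (and the fact that $\operatorname{Mult}(\mathcal{D}_{\alpha,E_1},\mathcal{D}_{\alpha,E_2})$ contains these bounded holomorphic operator functions — this needs the Kidane--Trent corona machinery rather than $H^\infty$), one gets $M^*_{(\Phi^\#)^*}$ acting on reproducing kernels by $K_\alpha(\cdot,\overline w)\otimes\sigma_i\mapsto K_\alpha(\cdot,\overline w)\otimes D_T\gamma^{(i)}(w)$. The lower bound $c_1\le h_{\mathcal{E}_{D_T}}$ together with $\operatorname{trace}\mathcal{J}_1(h_{\mathcal{E}_{D_T}})>h_{\mathcal{E}_{D_T}}$ should give $(\Phi^\#)^*(w)$ bounded below uniformly, so Theorem \ref{tavan} (corona for $\operatorname{Mult}(\mathcal{D}_\alpha)$, applied entrywise, with Theorem \ref{rowcolumn} converting the column bound to a row bound) produces a left inverse $\Psi$ in the multiplier algebra; then $M^*_{(\Phi^\#)^*}$ is bounded below with closed range equal to the realization of $(\mathcal{E}_{D^*_\alpha}\otimes\mathcal{E}_{D_T})(w)$, hence invertible onto $\mathcal{N}$. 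Composing with the unitary $V$ of Lemma \ref{lemma4} (and Lemma \ref{lemma5}) yields a bounded invertible $Y$ with $Y\mathcal{E}_T(w)=\mathcal{E}_{D^*_\alpha}(w)$ for all $w$, and by the Cowen--Douglas rigidity theorem (\cite{CD1}, Theorem 1.14) this forces $T\sim_s D^*_\alpha$.

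The main obstacle will be the multiplier step: unlike the Hardy/Bergman cases in Section \ref{sec2}, here $\operatorname{Mult}(\mathcal{D}_\alpha)\subsetneq H^\infty(\mathbb{D})$, so merely knowing $\Phi^\#$ is bounded and holomorphic is not enough to conclude $(\Phi^\#)^*\in\operatorname{Mult}(\mathcal{D}_{\alpha,E},\mathcal{D}_{\alpha,\mathbb{C}^n})$, nor that its left inverse is a multiplier. This is exactly where the hypothesis on the \emph{trace of the $1$-jet} is doing real work: the bound $\operatorname{trace}\mathcal{J}_1(h_{\mathcal{E}_{D_T}})(w)\le c_2$ is a Dirichlet-type (rather than just sup-norm) control on $D_T\gamma$, and I expect it to be precisely the condition guaranteeing that the relevant column operator $M^C_F$ is bounded on $\mathcal{D}_\alpha$, so that Theorems \ref{rowcolumn} and \ref{tavan} apply. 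I would carry out, in order: (i) rewrite the hypothesis in terms of $\Vert D_T\gamma(w)\Vert$ and $\Vert(D_T\gamma)'(w)\Vert$ and verify the associated operator function lies in the multiplier algebra using that the norm-square of each component is subharmonic-dominated by $\operatorname{trace}\mathcal{J}_1(h_{\mathcal{E}_{D_T}})$ and hence has bounded Dirichlet integral; (ii) verify the lower bound $(\Phi^\#)^*(w)(\Phi^\#)^*(w)^*\ge\delta>0$; (iii) invoke Kidane--Trent to get a multiplier left inverse; (iv) identify $X=M^*_{(\Phi^\#)^*}$ as invertible onto the realization of $\mathcal{E}_{D^*_\alpha}\otimes\mathcal{E}_{D_T}$; (v) compose with the M\"uller unitary and apply Cowen--Douglas rigidity. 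Steps (i)--(iii) are the crux; (iv)--(v) parallel the argument in Theorem \ref{t1}.
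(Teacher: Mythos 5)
Your overall plan --- M\"uller's model via Lemmas \ref{lemma4} and \ref{lemma5} to get $\mathcal{E}_{T}\sim_{u}\mathcal{E}_{D_{\alpha}^{*}}\otimes\mathcal{E}_{D_{T}}$, then a multiplier built from the cross-section $\gamma=D_T\gamma_T$, a Kidane--Trent corona left inverse, and Cowen--Douglas rigidity --- is exactly the paper's strategy, and your identification of the multiplier step as the crux is correct. But your central construction is off. You propose to build the intertwiner from a \emph{two-column} symbol $\Phi^{\#}(w):\mathbb{C}\oplus\mathbb{C}\to E$ with columns $\gamma(\overline w)$ and $\gamma'(\overline w)$, ``because the appropriate source space is the $1$-jet bundle.'' That operator acts on $\mathcal{D}_{\alpha,\mathbb{C}^{2}}$, not on $\mathcal{D}_{\alpha}$, so even if it were invertible onto its range it could not carry the one-dimensional fibers $\mathcal{E}_{D_{\alpha}^{*}}(w)$ onto the one-dimensional fibers of $\mathcal{E}_{D_{\alpha}^{*}}\otimes\mathcal{E}_{D_{T}}$, and Theorem \ref{tavan} (a scalar corona theorem for a single row) would not produce a left inverse for a genuinely $2$-column symbol. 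In the paper the symbol is the single column $F(w)(1)=\gamma(w)$; the derivative $\gamma'$ never appears in the intertwiner. The jet-bundle hypothesis enters only through the inequality $\sum_i|\gamma_i(w)|^{2}+\sum_i|\gamma_i'(w)|^{2}=\operatorname{trace}\mathcal{J}_{1}(h_{\mathcal{E}_{D_{T}}})(w)\le c_{2}$, i.e.\ as a \emph{uniform pointwise} bound on $|\gamma_i'|$.

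The second gap is in how you propose to get $\gamma_i\in\operatorname{Mult}(\mathcal{D}_{\alpha})$: ``subharmonic-dominated by the trace and hence has bounded Dirichlet integral'' is not enough, since $\operatorname{Mult}(\mathcal{D}_{\alpha})\subsetneqq H^{\infty}(\mathbb{D})\cap\mathcal{D}_{\alpha}$ --- boundedness plus finite Dirichlet-type energy does not make a multiplier. What the paper actually uses is Luo's Carleson-measure characterization: $\phi\in\operatorname{Mult}(\mathcal{D}_{\alpha})$ iff $\int_{\mathbb{D}}|f|^{2}|\phi'|^{2}(1-|z|^{2})^{1-\alpha}\,dA\le d\|f\|_{\mathcal{D}_{\alpha}}^{2}$ for all $f\in\mathcal{D}_{\alpha}$. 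Because the jet hypothesis gives the \emph{sup-norm} bound $|\gamma_i'(w)|^{2}\le c_{2}$, this reduces to the embedding $\int_{\mathbb{D}}|f|^{2}(1-|z|^{2})^{1-\alpha}\,dA\le d\|f\|_{\mathcal{D}_{\alpha}}^{2}$, which one gets for free by applying the same characterization to $\phi(z)=z$. With that fix, and with the symbol taken to be the single column $F$, your steps (ii)--(v) go through exactly as in the paper: $F^{*}F=h_{\mathcal{E}_{D_{T}}}\ge c_{1}$ feeds Theorem \ref{tavan} to produce $g_i$ with $\sum_i g_i\gamma_i\equiv 1$, $M^{*}_{(G^{\#})^{*}}M^{*}_{(F^{\#})^{*}}=I_{\mathcal{D}_{\alpha}}$ on kernels, and $X=M^{*}_{(F^{\#})^{*}}$ composed with the M\"uller unitary gives the similarity.
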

\begin{proof}
Since $\sum\limits_{s=1}^{\infty}\frac{\Vert T^{s}\Vert^{2}}{(s+1)^{\alpha}}\leq 1$ for some constant $\alpha\in(0,1]$, by Lemmas \ref{lemma4} and \ref{lemma5}, $\mathcal{E}_{T}\sim_{u}\mathcal{E}_{D_{\alpha}^{*}}\otimes\mathcal{E}_{D_{T}}$ for the holomorphic line bundle $\mathcal{E}_{D_{T}}$ induced by the defect operator $D_{T}$.

Now, a holomorphic function $f\in\mathcal{D}_{\alpha}$ can be represented as a power series
$$f(z)=\sum\limits_{i=0}^{\infty}a_{i}z^{i},\quad\text{where }\quad\Vert f\Vert_{\mathcal{D}_{\alpha}}^{2}=\sum\limits_{i=0}^{\infty}(i+1)^{\alpha}|a_{i}|^{2}<\infty.$$ Since $\Vert zf\Vert_{\mathcal{D}_{\alpha}}^{2}=\sum\limits_{i=0}^{\infty}(i+2)^{\alpha}|a_{i}|^{2}\leq 2\sum\limits_{i=0}^{\infty}(i+1)^{\alpha}|a_{i}|^{2}<\infty$, $z\in\text{Mult}(\mathcal{D}_{\alpha})$. From \cite{L2022}, it is known that $\phi \in H^{\infty}(\mathbb{D})$ is in $\text{Mult}(\mathcal{D}_{\alpha})$ if and only if there is a constant $d>0$ such that
\begin{equation*}\label{06}
\int_{\mathbb{D}}\vert f(z)\vert^{2}\vert \phi^{'}(z)\vert^{2}(1-|z|^{2})^{1-\alpha}dA(z)\leq d\Vert f\Vert^{2}_{\mathcal{D}_{\alpha}},\quad f\in\mathcal{D}_{\alpha},
\end{equation*}
where $dA$ is the normalized area measure.
Setting $\phi(z)=z$, we have
\begin{equation}\label{15}
\int_{\mathbb{D}}\vert f(z)\vert^{2}(1-|z|^{2})^{1-\alpha}dA(z)\leq d\Vert f\Vert^{2}_{\mathcal{D}_{\alpha}},\quad f\in\mathcal{D}_{\alpha}.
\end{equation}

By assumption, the metric $\mathcal{J}_{1}(h_{\mathcal{E}_{D_{T}}})$ of the $1$-jet bundle $\mathcal{J}_{1}(\mathcal{E}_{D_{T}})$ satisfies
\begin{equation}\label{16}
c_{1}\leq h_{\mathcal{E}_{D_{T}}}(w)<\text{trace}\;\mathcal{J}_{1}(h_{\mathcal{E}_{D_{T}}})(w) \leq c_{2},\quad w\in\mathbb{D},
\end{equation}
for some constants $c_{2}> c_{1}>0$, where $h_{\mathcal{E}_{D_{T}}}$ denotes the metric obtained from a non-vanishing holomorphic cross-section $\gamma$ of $\mathcal{E}_{D_{T}}$. Furthermore, since $\mathcal{E}_{D_{T}}(w)\subset\mathbb{C}^{n}$ for some $n>0$, there is an orthonormal basis $\{e_{i}\}_{i=1}^{n}$ of the Hilbert space $E=\bigvee\limits_{w\in\mathbb{D}}\gamma(w)$ such that $\gamma(w)=\sum\limits_{i=1}^{n}\gamma_{i}(w)e_{i}$.
From (\ref{16}), it follows that
\begin{equation}\label{17}
c_{1}\leq h_{\mathcal{E}_{D_{T}}}(w)=\sum\limits_{i=1}^{n}|\gamma_{i}(w)|^{2}<\sum\limits_{i=1}^{n}|\gamma_{i}(w)|^{2}+\sum\limits_{i=1}^{n}|\gamma^{'}_{i}(w)|^{2}=
h_{\mathcal{E}_{D_{T}}}(w)+\frac{\partial^{2}}{\partial w\partial \overline{w}}h_{\mathcal{E}_{D_{T}}}(w)\leq c_{2},\quad w\in\mathbb{D}.
\end{equation}
Then by (\ref{15}) and (\ref{17}), we have for every $f \in \mathcal{D}_{\alpha}$,
\begin{equation}\label{25}
\int_{\mathbb{D}}\vert f(z)\vert^{2}\vert \gamma^{'}_{i}(z)\vert^{2}(1-|z|^{2})^{1-\alpha}dA(z)\leq
c_{2}\int_{\mathbb{D}}\vert f(z)\vert^{2}(1-|z|^{2})^{1-\alpha}dA(z)
\leq d c_{2}\Vert f\Vert^{2}_{\mathcal{D}_{\alpha}}.
\end{equation}
This shows that $\{\gamma_{1},\ldots,\gamma_{n}\}\subset\text{Mult}(\mathcal{D}_{\alpha})$ and that $\Vert M_{\gamma_{i}}\Vert=\Vert\gamma_{i}\Vert_{\text{Mult}(\mathcal{D}_{\alpha})}<\infty$ for $1\leq i\leq n.$

We next define an operator-valued function $F: \mathbb{D}\rightarrow \mathcal{L}(\mathbb{C}, E)$ by
$$F(w)(1)=\gamma(w).$$
Then for every $f\in\mathcal{D}_{\alpha}$,
$M_{F}f=f\gamma$,
and
$$\Vert M_{F}\Vert^{2}=\sup\limits_{\Vert f\Vert_{\mathcal{D}_{\alpha}}\neq 0}\frac{\Vert M_{F}f\Vert_{\mathcal{D}_{\alpha, E}}^{2}}{\Vert f\Vert^{2}_{\mathcal{D}_{\alpha}}}=
\sup\limits_{\Vert f\Vert_{\mathcal{D}_{\alpha}}\neq 0}\frac{\sum\limits_{i=1}^{n}\Vert f\gamma_{i}\Vert_{\mathcal{D}_{\alpha}}^{2}}{\Vert f\Vert^{2}_{\mathcal{D}_{\alpha}}}
\leq\sum\limits_{i=1}^{n}\Vert M_{\gamma_{i}}\Vert^{2}<\infty,$$
that is, $F\in \text{Mult}(\mathcal{D}_{\alpha},\mathcal{D}_{\alpha, E}).$
For $\gamma_{i}\in H^{\infty}(\mathbb{D}), 1 \leq i \leq n$, we let
$\gamma_{i}(z):=\sum\limits_{j=0}^{\infty}a_{i,j}z^{j}$ and
$\widetilde{\gamma}_{i}(z):=\sum\limits_{j=0}^{\infty}\overline{a_{i,j}}z^{j}$. Then
using $(\ref{17})$ and $(\ref{25})$, we deduce that for every $w \in \mathbb{D}$,
$|\widetilde{\gamma}_{i}(w)|^{2}, |\widetilde{\gamma}^{'}_{i}(w)|^{2}\leq c_{2}$ and
$$\int_{\mathbb{D}}\vert f(z)\vert^{2}\vert \widetilde{\gamma}^{'}_{i}(z)\vert^{2}(1-|z|^{2})^{1-\alpha}dA(z)\leq
c_{2}\int_{\mathbb{D}}\vert f(z)\vert^{2}(1-|z|^{2})^{1-\alpha}dA(z)
\leq d c_{2}\Vert f\Vert^{2}_{\mathcal{D}_{\alpha}},\,\, f\in\mathcal{D}_{\alpha}, 1\leq i\leq n.$$
Thus $\{\widetilde{\gamma}_{1},\ldots,\widetilde{\gamma}_{n}\}\subset\text{Mult}(\mathcal{D}_{\alpha})$ and $(F^{\#})^{*}\in\text{Mult}(\mathcal{D}_{\alpha, E},\mathcal{D}_{\alpha})$ for $F^{\#}(w)=F(\overline{w}).$
Since $1\in\text{Mult}(\mathcal{D}_{\alpha})$, $\gamma_{i}\in \text{Mult}(\mathcal{D}_{\alpha})$, and $\sum\limits_{i=1}^{n}|\gamma_{i}(w)|^{2}\geq c_{1}>0$ for every $w\in\mathbb{D}$, by Theorem \ref{tavan}, there are functions
$\{g_{1},\ldots,g_{n}\}\subset\text{Mult}(\mathcal{D}_{\alpha})$ such that
$\sum\limits_{i=1}^{n}g_{i}(w)\gamma_{i}(w)=1$ for every $ w\in\mathbb{D}.$

We now define a function $G:\mathbb{D}\rightarrow \mathcal{L}(E, \mathbb{C})$ by
$$G(w)h:=\sum\limits_{i=1}^{n}g_{i}(w)h_{i},$$
where $h=\sum\limits_{i=1}^{n}h_{i}e_{i}\in E.$
It is easy to see that $G\in\text{Mult}(\mathcal{D}_{\alpha,E}, \mathcal{D}_{\alpha})$ and $(G^{\#})^{*}\in\text{Mult}(\mathcal{D}_{\alpha},\mathcal{D}_{\alpha, E})$ for $G^{\#}(w)=G(\overline{w})$.
Hence, using Theorem \ref{book} and the reproducing kernel $K_{\alpha}(z,w)=\sum\limits_{n=0}^{\infty}\frac{z^{n}\overline{w}^{n}}{(n+1)^{\alpha}}$ of $\mathcal{D}_{\alpha}$,
$$M^*_{(F^{\#})^*}(K_{\alpha}(\cdot,\overline{w}) \otimes 1)=K_{\alpha}(\cdot,\overline{w}) \otimes F^{\#}(\overline{w})(1)=K_{\alpha}(\cdot,\overline{w}) \otimes F(w)(1)=K_{\alpha}(\cdot,\overline{w}) \otimes\gamma(w),$$
and therefore,
$$M^{*}_{(G^{\#})^{*}}M^{*}_{(F^{\#})^{*}}(K_{\alpha}(\cdot,\overline{w}) \otimes 1)=M^{*}_{(G^{\#})^{*}}(K_{\alpha}(\cdot,\overline{w}) \otimes F(w)(1))=K_{\alpha}(\cdot,\overline{w}) \otimes \sum\limits_{i=1}^{n}g_{i}(w)\gamma_{i}(w)=K_{\alpha}(\cdot,\overline{w})\otimes 1,$$
so that using the fact that
$\mathcal{D}_{\alpha}=\bigvee\limits_{w\in\mathbb{D}}K_{\alpha}(\cdot,\overline{w}),$ we conclude that
$M^{*}_{(G^{\#})^{*}}M^{*}_{(F^{\#})^{*}}\equiv I_{\mathcal{D}_{\alpha}}.$
Moreover, if we let $\mathcal{N}:=\bigvee\limits_{w\in\mathbb{D}}\{K_{\alpha}(\cdot,\overline{w})\otimes \gamma(w)\}$, then since the operator $M^{*}_{(F^{\#})^{*}}$ is left-invertible, $\text{ran }M^{*}_{(F^{\#})^{*}}=\mathcal{N}.$
Hence, the operator $X:=M^{*}_{(F^{\#})^{*}}:\mathcal{D}_{\alpha}\rightarrow \mathcal{N}$ is a bounded invertible operator satisfying
$$
X(\mathcal{E}_{D_{\alpha}^{*}}(w))=(\mathcal{E}_{D_{\alpha}^{*}}\otimes \mathcal{E}_{D_{T}})(w),\quad w\in\mathbb{D}.
$$
Finally, by considering $\mathcal{E}_{T}\sim_{u}\mathcal{E}_{D_{\alpha}^{*}}\otimes\mathcal{E}_{D_{T}}$, we conclude that $T \sim_{s} D^*_{\alpha}$.
\end{proof}

In Theorem \ref{thm1}, the ratio of the metrics of holomorphic line bundles $\mathcal{E}_{T}$ and $\mathcal{E}_{D^*_{\alpha}}$ is the metric of the holomorphic line bundle $\mathcal{E}_{D_{T}}$ induced by the defect operator $D_{T}$, that is, $h_{\mathcal{E}_{D_{T}}}=\frac{h_{\mathcal{E}_{T}}}{h_{D^*_{\alpha}}}$.
Letting $\mathcal{H}_{K}$ be a Hilbert space with a diagonal reproducing kernel ${K}(z,w)=\sum\limits_{m,n=0}^{\infty}a_{m,n}z^{m}\overline{w}^{n}$, which means that for all $n \neq m$, $a_{m,n}=0$, we obtain a similarity classification of holomorphic complex bundles and the similarity of backward shift operators on non-diagonal reproducing kernel Hilbert spaces.

\begin{corollary}\label{cor4.5}
Let $\mathcal{E}_{1}$ and $\mathcal{E}_{2}$ be Hermitian holomorphic line bundles over $\mathbb{D}$ where $\mathcal{E}_{1}\sim_{u}\mathcal{E}_{D_{\alpha}^{*}}\otimes\mathcal{E}_{2}$.
If there exist constants $c_{2}> c_{1}>0$ and a metric $h_{\mathcal{E}_{2}}$ of $\mathcal{E}_{2}$ such that $$c_{1}\leq h_{\mathcal{E}_{2}}(w)<\text{trace}\;\mathcal{J}_{1}(h_{\mathcal{E}_{2}})(w) \leq c_{2},$$ and $\mathcal{E}_{2}(w) \subset \mathbb{C}^n$ for some positive integer $n>0$, then $\mathcal{E}_{1}\sim_{s}\mathcal{E}_{D_{\alpha}^{*}}.$
\end{corollary}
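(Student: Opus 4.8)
The statement is the purely bundle-theoretic counterpart of Theorem \ref{thm1}: on inspecting the proof of Theorem \ref{thm1}, the operator $T$ is never used beyond producing the bundle relation $\mathcal{E}_{T}\sim_{u}\mathcal{E}_{D_{\alpha}^{*}}\otimes\mathcal{E}_{D_{T}}$ (via Lemmas \ref{lemma4} and \ref{lemma5}) and the line bundle $\mathcal{E}_{D_{T}}$ together with its jet estimate. Here those two inputs are supplied directly as hypotheses, with $\mathcal{E}_{1}$ in the role of $\mathcal{E}_{T}$ and $\mathcal{E}_{2}$ in the role of $\mathcal{E}_{D_{T}}$. Hence the plan is simply to replay the argument of Theorem \ref{thm1} with these substitutions, producing a bounded invertible multiplication operator that intertwines the two bundles.

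In more detail: first I would fix a non-vanishing holomorphic cross-section $\gamma$ of $\mathcal{E}_{2}$ realizing the given metric, $h_{\mathcal{E}_{2}}(w)=\langle\gamma(w),\gamma(w)\rangle$, and, using $\mathcal{E}_{2}(w)\subset\mathbb{C}^{n}$, write $\gamma(w)=\sum_{i=1}^{n}\gamma_{i}(w)e_{i}$ for an orthonormal basis $\{e_{i}\}_{i=1}^{n}$ of $E=\bigvee_{w\in\mathbb{D}}\gamma(w)$. The pinching hypothesis $c_{1}\leq h_{\mathcal{E}_{2}}(w)<\text{trace}\,\mathcal{J}_{1}(h_{\mathcal{E}_{2}})(w)\leq c_{2}$ unwinds, exactly as in (\ref{17}), to $c_{1}\leq\sum_{i=1}^{n}|\gamma_{i}(w)|^{2}$ and $\sum_{i=1}^{n}\bigl(|\gamma_{i}(w)|^{2}+|\gamma_{i}'(w)|^{2}\bigr)\leq c_{2}$, so that every $\gamma_{i}$ and every $\gamma_{i}'$ is bounded on $\mathbb{D}$. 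Invoking the characterization from \cite{L2022} together with $z\in\text{Mult}(\mathcal{D}_{\alpha})$ gives the Carleson-type inequality (\ref{15}), from which boundedness of $\gamma_{i}'$ forces $\gamma_{i}\in\text{Mult}(\mathcal{D}_{\alpha})$, and likewise for the conjugated coefficients $\widetilde{\gamma}_{i}$.

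Next I would define $F\colon\mathbb{D}\to\mathcal{L}(\mathbb{C},E)$ by $F(w)(1)=\gamma(w)$; the estimates just obtained show $F\in\text{Mult}(\mathcal{D}_{\alpha},\mathcal{D}_{\alpha,E})$ and $(F^{\#})^{*}\in\text{Mult}(\mathcal{D}_{\alpha,E},\mathcal{D}_{\alpha})$ for $F^{\#}(w)=F(\overline{w})$. Since $\sum_{i=1}^{n}|\gamma_{i}(w)|^{2}\geq c_{1}>0$ on $\mathbb{D}$ and $1,\gamma_{1},\ldots,\gamma_{n}\in\text{Mult}(\mathcal{D}_{\alpha})$, the Kidane--Trent corona theorem (Theorem \ref{tavan}) yields $g_{1},\ldots,g_{n}\in\text{Mult}(\mathcal{D}_{\alpha})$ with $\sum_{i=1}^{n}g_{i}\gamma_{i}\equiv1$; assembling these into $G\colon\mathbb{D}\to\mathcal{L}(E,\mathbb{C})$, $G(w)\bigl(\sum h_{i}e_{i}\bigr)=\sum g_{i}(w)h_{i}$, gives $(G^{\#})^{*}\in\text{Mult}(\mathcal{D}_{\alpha},\mathcal{D}_{\alpha,E})$. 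Theorem \ref{book} applied to the diagonal reproducing kernel $K_{\alpha}$ then gives $M^{*}_{(G^{\#})^{*}}M^{*}_{(F^{\#})^{*}}=I_{\mathcal{D}_{\alpha}}$, so $X:=M^{*}_{(F^{\#})^{*}}\colon\mathcal{D}_{\alpha}\to\mathcal{N}:=\bigvee_{w\in\mathbb{D}}\{K_{\alpha}(\cdot,\overline{w})\otimes\gamma(w)\}$ is a bounded invertible operator with $X(\mathcal{E}_{D_{\alpha}^{*}}(w))=(\mathcal{E}_{D_{\alpha}^{*}}\otimes\mathcal{E}_{2})(w)$ for all $w\in\mathbb{D}$. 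Finally, composing $X^{-1}$ with the unitary implementing $\mathcal{E}_{1}\sim_{u}\mathcal{E}_{D_{\alpha}^{*}}\otimes\mathcal{E}_{2}$ produces a bounded invertible operator carrying $\mathcal{E}_{1}(w)$ onto $\mathcal{E}_{D_{\alpha}^{*}}(w)$ for every $w$, i.e. $\mathcal{E}_{1}\sim_{s}\mathcal{E}_{D_{\alpha}^{*}}$. Since this is a corollary obtained by transcribing an already-proven argument, there is no genuine obstacle; the only points requiring care are the same bookkeeping ones as in Theorem \ref{thm1}: checking that the sharp versions $(F^{\#})^{*}$ and $(G^{\#})^{*}$ land in the correct multiplier algebras, that Theorem \ref{book} is legitimately applicable to the vector-valued weighted Dirichlet spaces $\mathcal{D}_{\alpha,E}$, and that the hypotheses indeed furnish the non-vanishing holomorphic cross-section of $\mathcal{E}_{2}$ with values in $\mathbb{C}^{n}$ from which the construction starts.
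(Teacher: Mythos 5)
Your proposal is correct and matches the paper's intent: the corollary is stated without a separate proof precisely because, as you observe, the argument of Theorem \ref{thm1} only uses the operator $T$ to produce the unitary equivalence $\mathcal{E}_{T}\sim_{u}\mathcal{E}_{D_{\alpha}^{*}}\otimes\mathcal{E}_{D_{T}}$ and then works entirely at the level of the line bundle and its jet metric, so substituting $\mathcal{E}_{1}$ for $\mathcal{E}_{T}$ and $\mathcal{E}_{2}$ for $\mathcal{E}_{D_{T}}$ reproduces the conclusion verbatim. Your transcription of the corona-theorem and multiplier steps is faithful to the paper's argument.
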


\begin{example}
Let $T\in\mathcal{B}_{1}(\mathbb{D})$ and let $\mathcal{E}_{T}\sim_{u}\mathcal{E}_{D_{\alpha}^{*}}\otimes\mathcal{E}$ for some Hermitian holomorphic line bundle $\mathcal{E}$ over $\mathbb{D}$.
If the metric $h_{\mathcal{E}}$ of $\mathcal{E}$ satisfies $h_{\mathcal{E}}(w)=1+\sum\limits_{i=1}^{n}|\phi_{i}(w)|^{2}$ for some polynomials $\phi_{1}, \ldots, \phi_{n}$, and a positive integer $n$, then $\mathcal{E}_{T}\sim_{s}\mathcal{E}_{D_{\alpha}^{*}}$, that is, $T\sim_{s} D_{\alpha}^{*}$.
\end{example}

A necessary and sufficient condition for certain operators to be similar to the backward shift operators on weighted Dirichlet spaces in terms of holomorphic line bundles is given next as follows:

\begin{proposition}\label{pro4.7}
For an operator $T\in\mathcal{B}_{1}(\mathbb{D})$, if there exists a Hermitian holomorphic line bundle $\mathcal{E}$ over $\mathbb{D}$ such that $\mathcal{E}_{T}\sim_{u}\mathcal{E}_{D_{\alpha}^{*}}\otimes\mathcal{E}$, then $T\sim_{s} D_{\alpha}^{*}$ if and only if $\mathcal{E}_{D_{\alpha}^{*}}\otimes\mathcal{E}\sim_{s}\mathcal{E}_{D_{\alpha}^{*}}$.
\end{proposition}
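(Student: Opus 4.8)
The plan is to reduce the whole statement to the standard dictionary between similarity of Cowen--Douglas operators and similarity of their associated Hermitian holomorphic eigenvector bundles, and then observe that the equivalence is a purely formal consequence of that dictionary together with the hypothesis $\mathcal{E}_{T}\sim_{u}\mathcal{E}_{D_{\alpha}^{*}}\otimes\mathcal{E}$.

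First I would record the elementary fact underlying the notation $\mathcal{E}_{A}\sim_{s}\mathcal{E}_{B}$: for operators $A,B\in\mathcal{B}_{1}(\mathbb{D})$, one has $A\sim_{s}B$ if and only if there is a bounded invertible $X$ with $X(\ker(A-w))=\ker(B-w)$ for every $w\in\mathbb{D}$. For the forward direction, if $A=X^{-1}BX$ then $X$ visibly carries $\ker(A-w)$ onto $\ker(B-w)$. For the converse, given such an $X$, for $x\in\ker(A-w)$ we get $BXx=wXx=X(wx)=X(Ax)$, so $BX$ and $XA$ agree on $\bigvee_{w\in\mathbb{D}}\ker(A-w)=\mathcal{H}$, and since both are bounded, $BX=XA$, that is $A=X^{-1}BX$. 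I would also note that $\sim_{s}$ is an equivalence relation on bundles (reflexivity is trivial, symmetry follows by replacing $X$ with $X^{-1}$, transitivity by composing the bundle maps), and that $\sim_{u}$ trivially implies $\sim_{s}$. This puts the proof on exactly the footing used at the end of the proof of Theorem \ref{thm1}.

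The argument then splits into the two implications. For the forward direction, assume $T\sim_{s}D_{\alpha}^{*}$; the dictionary gives $\mathcal{E}_{T}\sim_{s}\mathcal{E}_{D_{\alpha}^{*}}$, while $\mathcal{E}_{T}\sim_{u}\mathcal{E}_{D_{\alpha}^{*}}\otimes\mathcal{E}$ gives $\mathcal{E}_{D_{\alpha}^{*}}\otimes\mathcal{E}\sim_{s}\mathcal{E}_{T}$, so transitivity yields $\mathcal{E}_{D_{\alpha}^{*}}\otimes\mathcal{E}\sim_{s}\mathcal{E}_{D_{\alpha}^{*}}$. Conversely, if $\mathcal{E}_{D_{\alpha}^{*}}\otimes\mathcal{E}\sim_{s}\mathcal{E}_{D_{\alpha}^{*}}$, then combining with $\mathcal{E}_{T}\sim_{u}\mathcal{E}_{D_{\alpha}^{*}}\otimes\mathcal{E}$ (hence $\sim_{s}$) and transitivity gives $\mathcal{E}_{T}\sim_{s}\mathcal{E}_{D_{\alpha}^{*}}$, and the dictionary returns $T\sim_{s}D_{\alpha}^{*}$. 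One should remark in passing that the tensor bundle $\mathcal{E}_{D_{\alpha}^{*}}\otimes\mathcal{E}$ is here genuinely the eigenvector bundle of a Cowen--Douglas operator, since it is unitarily equivalent to $\mathcal{E}_{T}$; this is what legitimizes invoking the operator/bundle dictionary on the tensor side.

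I expect the only real content to sit in the first step, the equivalence of operator similarity with bundle similarity, and even that is routine: it uses only the spanning property (3) in the definition of $\mathcal{B}_{1}(\mathbb{D})$ and boundedness of the intertwiner. The sole point requiring care is matching conventions --- the direction of the intertwining relation and the precise meaning of $\mathcal{E}_{1}\sim_{s}\mathcal{E}_{2}$ --- so that they agree with the usage established earlier (in Corollary \ref{cor4.5} and the proof of Theorem \ref{thm1}); once those are fixed, no further obstacle remains.
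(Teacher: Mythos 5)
Your argument is correct, and it is essentially the argument the paper intends: the paper states Proposition \ref{pro4.7} without proof, treating it as the routine consequence of the operator/bundle dictionary that is already used explicitly at the end of the proof of Theorem \ref{thm1} (a bounded invertible intertwiner is the same as a bounded invertible map carrying $\ker(T-w)$ onto $\ker(S-w)$ for all $w$, by the spanning property (3) of $\mathcal{B}_1(\mathbb{D})$). Your reduction via transitivity of $\sim_s$ and the hypothesis $\mathcal{E}_{T}\sim_{u}\mathcal{E}_{D_{\alpha}^{*}}\otimes\mathcal{E}$ is exactly this, so there is nothing to add.
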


If the reproducing kernel of a Hilbert space $\mathcal{H}_{K}$ is of the form ${K}(z,w)=\sum\limits_{m,n=0}^{\infty}a_{m,n}z^{m}\overline{w}^{n},$  where, for some $n \neq m$, $a_{m,n}\neq0$, then Theorem \ref{lem11} cannot be used to determine similarity. The following example is a good illustration of how one can use Theorem \ref{thm1} to conclude that two operators are similar:

\begin{example}
Let $M_{z}^{*}$ be the backward shift operator on a Hilbert space $\mathcal{H}_{K}$ with reproducing kernel
 \begin{eqnarray*}
K(z,w)
&=&2+z+\overline{w}+2z\overline{w}+
(z+\overline{w})\sum\limits_{n=1}^{\infty}\frac{2n+1}{n(n+1)}z^{n}\overline{w}^{n}\\
&&+(z^{2}+\overline{w}^{2})\sum\limits_{n=0}^{\infty}\frac{z^{n}\overline{w}^{n}}{n+1}
+\sum\limits_{n=0}^{\infty}\frac{4n^{2}+15n+13}{(n+1)(n+2)(n+3)}z^{n+2}\overline{w}^{n+2}, \quad z,w\in\mathbb{D}.
\end{eqnarray*}
Then $M_{z}^{*}\sim_{s}D^{*}.$
\end{example}
\begin{proof}
Since the reproducing kernel of the Dirichlet space $\mathcal{D}$ is given as $K_{1}(z,w)=\sum\limits_{i=0}^{\infty}\frac{z^{i}\overline{w}^{i}}{i+1}$, $z,w\in\mathbb{D}$,
$$K(z,w)=K_{1}(z,w)\cdot\left[1+(1+z+z^{2})(1+\overline{w}+\overline{w}^{2})\right].$$
Then there exists a Hermitian holomorphic line bundle $\mathcal{E}$ over $\mathbb{D}$ such that $\mathcal{E}_{M_{z}^{*}}=\mathcal{E}_{D^{*}}\otimes\mathcal{E}$. Moreover, a non-vanishing holomorphic cross-section $\gamma$ of $\mathcal{E}$ is such that $\Vert\gamma(w)\Vert^{2}=1+|1+w+w^{2}|^{2}$. Therefore, the metric $h_{\mathcal{E}}(w)=\Vert\gamma(w)\Vert^{2}$ of $\mathcal{E}$
satisfies $$1\leq h_{\mathcal{E}}(w)<\text{trace} \mathcal{J}_{1}(h_{\mathcal{E}})(w)\leq 19,\quad w\in\mathbb{D},$$
and the conclusion can be derived from Theorem \ref{thm1}.
\end{proof}

In Theorem \ref{thm1}, the condition that $\mathcal{E}_{D_{T}}(w)\subset \mathbb{C}^{n}$ for some integer $n>0$ was necessary. A natural question is whether the result also holds when $n=\infty$. The following example answers the question in the affirmative:

\begin{example}\label{ex4.91}
Let $M_{z}^{*}$ be the backward shift operator on a Hilbert space $\mathcal{H}_{{K}}$ with reproducing kernel $${K}(z,w)=\sum\limits_{n=0}^{\infty}\left(
\sum\limits_{i=0}^{n}\frac{1}{(i+1)(n-i+1)^{2}}\right)z^{n}\overline{w}^{n},\quad z,w\in\mathbb{D}.$$
Then there is a Hermitian holomorphic line bundle
$\mathcal{E}=\{(w, \gamma): w\in\mathbb{D}, \gamma(w)\in\mathcal{E}(w)\}$
with the natural projection $\pi, \pi(w, \gamma)=w,$ such that $\mathcal{E}_{M_{z}^{*}}=\mathcal{E}_{D^{*}}\otimes\mathcal{E}$
and $\Vert\gamma(w)\Vert^{2}=\sum\limits_{n=0}^{\infty}\frac{1}{(n+1)^{2}}|w|^{2n},$
where $\mathcal{E}(w)=\bigvee \gamma(w).$

Using the relationship between a weight sequence
and the reproducing kernel of a Hilbert space, we see that the weight sequence of $M_{z}^{*}$ is $\left\{\alpha_{n}=\sqrt{\frac{a_{n-1}}{a_{n}}}\right\}_{n\geq 1}$, where $a_{n}=\sum\limits_{i=0}^{n}\frac{1}{(i+1)(n-i+1)^{2}}.$
Since the weight sequence of  $D^{*}$ is $\left\{\beta_{n}=\sqrt{\frac{n+1}{n}}\right\}_{n\geq 1}$, we have that
$\prod\limits_{i=1}^{n}\beta_{i}
\Big/\prod\limits_{i=1}^{n}\alpha_{i}
=\sqrt{(n+1)a_{n}}\geq1.$

Now, to prove from Theorem \ref{lem11} that $M_{z}^{*}$ is similar to $D^{*}$, we only need to prove that $\lim\limits_{n\rightarrow\infty}(n+1)a_{n}=c$ for some constant $c.$ Since
$\lim\limits_{n\rightarrow\infty}\left[\sum\limits_{k=1}^{n}\frac{1}{k}-\ln n\right]= \curlyvee$ , where $\curlyvee$ stands for the Euler-Mascheroni constant,
$$\lim\limits_{n\rightarrow\infty}\frac{n+1}{(n+2)^{2}}\sum\limits_{k=1}^{n+1}\frac{1}{k}=0,~ \lim\limits_{n\rightarrow\infty}\frac{n+1}{(n+2)^{2}}\sum\limits_{k=1}^{n+1}\frac{1}{n+2-k}=0,
~\text{and}~
\lim\limits_{n\rightarrow\infty}\frac{n+1}{n+2}\sum\limits_{k=1}^{n+1}\frac{1}{(n+2-k)^{2}}=\frac{\pi^{2}}{6}.$$
Therefore,
 \begin{eqnarray*}
\lim\limits_{n\rightarrow\infty}(n+1)a_{n}
&=&\lim\limits_{n\rightarrow\infty}(n+1)\left(
\sum\limits_{k=1}^{n+1}\frac{1}{k(n+2-k)^{2}}\right)\\
&=&\lim\limits_{n\rightarrow\infty}(n+1)\sum\limits_{k=1}^{n+1}\left[\frac{1}{(n+2)^{2}}\frac{1}{k}+
\frac{1}{(n+2)^{2}}\frac{1}{n+2-k}+\frac{1}{n+2}\frac{1}{(n+2-k)^{2}}\right]\\
&=&\frac{\pi^{2}}{6}.
\end{eqnarray*}
\end{example}

The following is another major theorem of this section. We provide a criterion for the similarity of backward shift operators on weighted Dirichlet spaces when $n=\infty$:

\begin{theorem}\label{thm2}
Let $T\in\mathcal{B}_{1}(\mathbb{D})$ and  suppose that $\sum\limits_{s=1}^{\infty}\frac{\Vert T^{s}\Vert^{2}}{(s+1)^{\alpha}}\leq 1$ for some constant $\alpha\in(0,1]$.
If there is a non-vanishing holomorphic cross-section $\gamma$ of $\mathcal{E}_{D_{T}}$ such that for some $\delta >0$,
\begin{equation}\label{equ4.10}
\delta\leq\Vert\gamma(w)\Vert_{E}^{2}\leq
\sum\limits_{i=1}^{\infty}\Vert\gamma_{i}\Vert^{2}_{\text{Mult}(\mathcal{D}_{\alpha})}<\infty,
\quad w\in\mathbb{D},
\end{equation}
where $\gamma(w)=\sum\limits_{i=1}^{\infty}\gamma_{i}(w)e_{i}$, $E=\bigvee\limits_{w\in\mathbb{D}}\gamma(w),$ and $\{e_{i}\}_{i=1}^{\infty}$ is an orthonormal basis of $E$, then $T\sim_{s}D_{\alpha}^{*}.$
\end{theorem}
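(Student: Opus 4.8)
The plan is to mimic the structure of the proof of Theorem \ref{thm1}, replacing the finite-dimensional multiplier argument with the genuinely infinite-dimensional row/column operator machinery of Kidane and Trent (Theorems \ref{rowcolumn} and \ref{tavan}). By Lemmas \ref{lemma4} and \ref{lemma5}, the hypothesis $\sum_{s=1}^{\infty}\Vert T^{s}\Vert^{2}/(s+1)^{\alpha}\leq 1$ gives $\mathcal{E}_{T}\sim_{u}\mathcal{E}_{D_{\alpha}^{*}}\otimes\mathcal{E}_{D_{T}}$, and $D_{T}\gamma$ (which I rename $\gamma$) is a non-vanishing holomorphic cross-section of the line bundle $\mathcal{E}_{D_{T}}$, with components $\gamma_{i}$ relative to an orthonormal basis $\{e_{i}\}$ of $E=\bigvee_{w}\gamma(w)$. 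So it suffices to produce a bounded invertible intertwiner between $D_{\alpha}^{*}$ and the restriction of $D_{\alpha,E}^{*}$ to the co-invariant subspace $\mathcal{N}:=\bigvee_{w\in\mathbb{D}}\{K_{\alpha}(\cdot,\overline{w})\otimes\gamma(w)\}$.

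First I would check that each $\gamma_{i}\in\text{Mult}(\mathcal{D}_{\alpha})$ with $\sum_{i}\Vert\gamma_{i}\Vert^{2}_{\text{Mult}(\mathcal{D}_{\alpha})}<\infty$; this is exactly the right-hand inequality in (\ref{equ4.10}). From this I define $F\colon\mathbb{D}\to\mathcal{L}(\mathbb{C},E)$ by $F(w)(1)=\gamma(w)$; the point is that $F$ is precisely the \emph{column} operator $M_{F}^{C}$ associated to the sequence $\{\gamma_{i}\}$, and $\Vert M_{F}^{C}\Vert^{2}\leq\sum_{i}\Vert M_{\gamma_{i}}\Vert^{2}<\infty$, so $F\in\text{Mult}(\mathcal{D}_{\alpha},\mathcal{D}_{\alpha,E})$. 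Passing to $F^{\#}(w)=F(\overline{w})$ and using Theorem \ref{rowcolumn} to control the transposed (row) side, I obtain $(F^{\#})^{*}\in\text{Mult}(\mathcal{D}_{\alpha,E},\mathcal{D}_{\alpha})$, and by Theorem \ref{book},
\begin{equation*}
M^{*}_{(F^{\#})^{*}}\bigl(K_{\alpha}(\cdot,\overline{w})\otimes 1\bigr)=K_{\alpha}(\cdot,\overline{w})\otimes\gamma(w),\quad w\in\mathbb{D}.
\end{equation*}
After rescaling so that $\Vert M_{F}^{C}\Vert\leq 1$ (which only changes the constants), the lower bound $\Vert\gamma(w)\Vert_{E}^{2}=\sum_{i}|\gamma_{i}(w)|^{2}\geq\delta>0$ lets me invoke the corona theorem, Theorem \ref{tavan}: there exist $\{g_{i}\}\subset\text{Mult}(\mathcal{D}_{\alpha})$ with $\sum_{i}g_{i}\gamma_{i}\equiv 1$ and, crucially, $\Vert M_{G}^{C}\Vert\leq C_{\alpha}/\delta^{4}<\infty$. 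Defining $G(w)h=\sum_{i}g_{i}(w)h_{i}$ for $h=\sum_{i}h_{i}e_{i}\in E$, the finiteness of $\Vert M_{G}^{C}\Vert$ together with Theorem \ref{rowcolumn} again gives $G\in\text{Mult}(\mathcal{D}_{\alpha,E},\mathcal{D}_{\alpha})$ and $(G^{\#})^{*}\in\text{Mult}(\mathcal{D}_{\alpha},\mathcal{D}_{\alpha,E})$.

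Then $M^{*}_{(G^{\#})^{*}}M^{*}_{(F^{\#})^{*}}$ sends $K_{\alpha}(\cdot,\overline{w})\otimes 1$ to $K_{\alpha}(\cdot,\overline{w})\otimes\sum_{i}g_{i}(w)\gamma_{i}(w)=K_{\alpha}(\cdot,\overline{w})\otimes 1$, and since $\mathcal{D}_{\alpha}=\bigvee_{w}K_{\alpha}(\cdot,\overline{w})$ we get $M^{*}_{(G^{\#})^{*}}M^{*}_{(F^{\#})^{*}}=I_{\mathcal{D}_{\alpha}}$. Hence $X:=M^{*}_{(F^{\#})^{*}}$ is bounded below and has closed range $\mathcal{N}$, so $X\colon\mathcal{D}_{\alpha}\to\mathcal{N}$ is invertible and $X(\mathcal{E}_{D_{\alpha}^{*}}(w))=(\mathcal{E}_{D_{\alpha}^{*}}\otimes\mathcal{E}_{D_{T}})(w)$ for all $w$. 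Composing with the unitary from Lemma \ref{lemma4} and using $\mathcal{E}_{T}\sim_{u}\mathcal{E}_{D_{\alpha}^{*}}\otimes\mathcal{E}_{D_{T}}$ gives an invertible bundle map carrying $\mathcal{E}_{T}$ to $\mathcal{E}_{D_{\alpha}^{*}}$, i.e. $T\sim_{s}D_{\alpha}^{*}$. The main obstacle, and the place where the new hypothesis earns its keep, is the passage between the column picture (where $F$ and $G$ naturally live, because $M_{F}^{C}h=(\gamma_{i}h)_{i}$ and similarly for $G$) and the row picture needed to identify $(F^{\#})^{*}$ and $(G^{\#})^{*}$ as honest multipliers into $\mathcal{D}_{\alpha,E}$ or $\mathcal{D}_{\alpha}$; this is precisely what Theorem \ref{rowcolumn} supplies, and one must also be careful that the $\sqrt{10}$ and the $\delta^{-4}$ constants remain finite — which is why (\ref{equ4.10}) is phrased with the $\text{Mult}$-norm rather than the sup-norm. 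A secondary point to verify is that the normalization $\Vert M_{F}^{C}\Vert\leq 1$ required by Theorem \ref{tavan} can be arranged without disturbing the lower bound, which is routine since scaling $\gamma$ by a constant scales both sides of (\ref{equ4.10}).
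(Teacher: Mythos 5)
Your proposal is correct and follows essentially the same route as the paper's proof: the same normalization of $\sum_i\Vert\gamma_i\Vert^2_{\text{Mult}(\mathcal{D}_\alpha)}$, the same column/row operator machinery of Kidane--Trent (Theorems \ref{rowcolumn} and \ref{tavan}) to produce the multipliers $F$ and $G$ with $\sum_i g_i\gamma_i\equiv 1$, and the same left-invertibility argument identifying $X=M^*_{(F^{\#})^*}$ as the intertwiner. If anything, your explicit invocation of Theorem \ref{rowcolumn} to justify $(F^{\#})^*\in\text{Mult}(\mathcal{D}_{\alpha,E},\mathcal{D}_{\alpha})$ is slightly more careful than the paper's ``obviously.''
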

\begin{proof}
Denoting by $K_{\alpha}(z,w)=\sum\limits_{i=0}^{\infty}\frac{z^{i}\overline{w}^{i}}{(i+1)^{\alpha}}, z,w\in\mathbb{D},$ the reproducing kernel for the space $\mathcal{D}_{\alpha}$, we have for every $f\in\mathcal{D}_{\alpha}$ and $\phi\in\text{Mult}(\mathcal{D}_{\alpha})$,
$$\langle f, M_{\phi}^{*}K_{\alpha}(\cdot,w)\rangle=\langle M_{\phi}f, K_{\alpha}(\cdot,w)\rangle=\phi(w)f(w)=\langle f, \overline{\phi(w)}K_{\alpha}(\cdot,w)\rangle,$$
and therefore,
$\Vert\phi\Vert_{\infty}\leq\Vert M_{\phi}\Vert=\Vert \phi \Vert_{\text{Mult}(\mathcal{D}_{\alpha})}.$
Let $\gamma$ be a non-vanishing holomorphic cross-section of $\mathcal{E}_{D_{T}}$ satisfying (\ref{equ4.10}).
If we assume without loss of generality that $\sum\limits_{i=1}^{\infty}\Vert\gamma_{i}\Vert^{2}_{\text{Mult}(\mathcal{D}_{\alpha})}\leq 1$, then
$$\Vert\gamma(w)\Vert_{E}^{2}=\sum\limits_{i=1}^{\infty}|\gamma_{i}(w)|^{2}\leq\sum\limits_{i=1}^{\infty}\Vert \gamma_{i}\Vert_{\infty}^{2}\leq\sum\limits_{i=1}^{\infty}\Vert\gamma_{i}\Vert^{2}_{\text{Mult}(\mathcal{D}_{\alpha})}\leq 1,\quad w\in\mathbb{D}.$$

We next define an operator-valued function $F: \mathbb{D}\rightarrow\mathcal{L}(\mathbb{C}, E)$ by
$$F(w)(1)=\gamma(w).$$ Then $F\in H^{\infty}_{\mathbb{C}\rightarrow E}(\mathbb{D})$ and the multiplication operator
$M_{F}: \mathcal{D}_{\alpha}\rightarrow\mathcal{D}_{\alpha, E}$
 satisfies
$$(M_{F}f)(z)=F(z)f(z)=f(z)\otimes \gamma(z).$$
Moreover, for every $f\in\mathcal{D}_{\alpha}$,
$$\Vert f\otimes \gamma\Vert_{\mathcal{D}_{\alpha}\otimes E}^{2}=\sum\limits_{i=1}^{\infty}\Vert f\gamma_{i}\Vert_{\mathcal{D}_{\alpha}}^{2}=\sum\limits_{i=1}^{\infty}\Vert M_{\gamma_{i}}f\Vert_{\mathcal{D}_{\alpha}}^{2}\leq
\Vert f\Vert_{\mathcal{D}_{\alpha}}^{2}\sum\limits_{i=1}^{\infty}\Vert M_{\gamma_{i}}\Vert^{2}
=\Vert f\Vert_{\mathcal{D}_{\alpha}}^{2}\sum\limits_{i=1}^{\infty}\Vert\gamma_{i}\Vert^{2}_{\text{Mult}(\mathcal{D}_{\alpha})}\leq\Vert f\Vert_{\mathcal{D}_{\alpha}}^{2}.$$
It follows that $F\in \text{Mult}(\mathcal{D}_{\alpha},\mathcal{D}_{\alpha,E})$.

Now let $F^{\#}(w):=F(\overline{w})$. Obviously, $(F^{\#})^{*}\in\text{Mult}(\mathcal{D}_{\alpha, E},\mathcal{D}_{\alpha})$. If we denote by $M_{\gamma}^{C}$ the column operator from $\mathcal{D}_{\alpha}$ to $\bigoplus\limits_{1}^{\infty}\mathcal{D}_{\alpha}$
defined by
$$M_{\gamma}^{C}(f)=(\gamma_{1}f,\gamma_{2}f,\cdots)^{T}=f\otimes\gamma,\quad f\in\mathcal{D}_{\alpha},$$
then $M_{F}=M_{\gamma}^{C}$ and
$$\Vert M_{\gamma}^{C}\Vert=\Vert M_{F}\Vert=\sup\limits_{\Vert f\Vert_{\mathcal{D}_{\alpha}}\neq0}\frac{\Vert M_{F}f\Vert_{\mathcal{D}_{\alpha, E}}}{\Vert f\Vert_{\mathcal{D}_{\alpha}}}
=\sup\limits_{\Vert f\Vert_{\mathcal{D}_{\alpha}}\neq0}\frac{\Vert f\otimes \gamma\Vert_{\mathcal{D}_{\alpha, E}}}{\Vert f\Vert_{\mathcal{D}_{\alpha}}}\leq 1.$$
Since $\Vert\gamma(w)\Vert_{E}^{2}=\sum\limits_{i=1}^{\infty}|\gamma_{i}(w)|^{2}\geq \delta>0$, by Theorem \ref{tavan}, we obtain functions $\{g_{i}\}_{i=1}^{\infty}\subset\text{Mult}(\mathcal{D}_{\alpha})$
such that $\sum\limits_{i=1}^{\infty}g_{i}(w)\gamma_{i}(w)=1$ for any $w\in\mathbb{D}.$ Moreover, the column operator $M_{G}^{C}:\mathcal{D}_{\alpha}\rightarrow\bigoplus\limits_{1}^{\infty}\mathcal{D}_{\alpha}$ defined by $$M_{G}^{C}(f)=(g_{1}f,g_{2}f,\cdots)^{T},\quad f\in\mathcal{D}_{\alpha},$$ is bounded and from Theorem \ref{rowcolumn}, we know that the row operator
$M_{G}^{R}:\bigoplus\limits_{1}^{\infty}\mathcal{D}_{\alpha}\rightarrow\mathcal{D}_{\alpha}$ is bounded as well.

Finally, we define an operator-valued function $\widetilde{G}: \mathbb{D}\rightarrow \mathcal{L}(E,\mathbb{C})$ by
$$\widetilde{G}(w)(h)=\sum\limits_{i=1}^{\infty}g_{i}(w)h_{i}, \quad h=\sum\limits_{i=1}^{\infty}h_{i}e_{i}\in E.$$
Then the operator
$M_{\widetilde{G}}: \mathcal{D}_{\alpha,E}\rightarrow \mathcal{D}_{\alpha}$
defined by $$M_{\widetilde{G}}f=\widetilde{G}f, \quad f\in \mathcal{D}_{\alpha,E},$$
is such that
$\Vert M_{\widetilde{G}}\Vert\leq\Vert M_{G}^{R}\Vert<\infty,$
so that $M_{\widetilde{G}}$ is bounded and $\widetilde{G}\in\text{Mult}(\mathcal{D}_{\alpha, E},\mathcal{D}_{\alpha})$.
Since $M_{G}^{C}$ is bounded, $(\widetilde{G}^{\#})^{*}\in\text{Mult}(\mathcal{D}_{\alpha},\mathcal{D}_{\alpha, E}),$ where $\widetilde{G}^{\#}(w)=\widetilde{G}(\overline{w}).$
Proceeding similarly as in the proof of Theorem \ref{thm1}, we obtain an invertible operator $X$ such that
$X(\mathcal{E}_{D_{\alpha}^{*}}(w))=(\mathcal{E}_{D_{\alpha}^{*}}\otimes \mathcal{E}_{D_{T}})(w),$ and since $\mathcal{E}_{T}\sim_{u}\mathcal{E}_{D_{\alpha}^{*}}\otimes\mathcal{E}_{D_{T}}$,
$T\sim_{s}D_{\alpha}^{*}$ as claimed.
\end{proof}

\begin{remark}
The condition (\ref{equ4.10}) in Theorem \ref{thm2} is a sufficient but not necessary condition, as illustrated in Example \ref{ex4.91}. In Example \ref{ex4.91}, $M_{z}^{*}$ is similar to $D^{*}$. From the holomorphic frame $\gamma$ of the holomorphic vector bundle $\mathcal{E}$, we obtain $\gamma(w)=\sum\limits_{i=1}^{\infty}\gamma_{i}(w)e_{i}$, where $\gamma_{i}(w)=\frac{1}{i}w^{i-1}$, and hence
$$\sum\limits_{i=1}^{\infty}\Vert\gamma_{i}\Vert^{2}_{\text{Mult}(\mathcal{D})}\geq\sum\limits_{i=1}^{\infty}\Vert \gamma_{i}\Vert_{\mathcal{D}}^{2}=\sum\limits_{i=1}^{\infty}\frac{1}{i}=\infty.$$
\end{remark}

The condition $\sum\limits_{s=1}^{\infty}\frac{\Vert T^{s}\Vert^{2}}{(s+1)^{\alpha}}\leq 1$ in Theorem \ref{thm2} ensures that the Hermitian holomorphic line bundle $\mathcal{E}_{T}$ acquires a tensor product structure. A natural question then arises: which types of operators correspond to Hermitian holomorphic complex bundles with a tensor product structure in the sense of unitary equivalence? We will give the precise answer, distinct from the condition $\sum\limits_{s=1}^{\infty}\frac{\Vert T^{s}\Vert^{2}}{s+1}\leq 1$, through the following example:

\begin{example}\label{ex4.10}
Let $M_{z}^{*}$ be the backward shift operator on a Hilbert space $\mathcal{H}_{K}$ with reproducing kernel $K(z,w)=\sum\limits_{n=0}^{\infty}a_{n}z^{n}\overline{w}^{n}$, $z,w\in\mathbb{D}$, where $a_{n}=\sum\limits_{\mbox{\tiny$\begin{array}{c}
i+j^{5}=n\\ i,j\in\mathbb{N}\end{array}$}}\frac{1}{(i+1)(j+1)^{8}}$ for $n\in\mathbb{N}$.
In this case, $\left\{\sqrt{a_{n}}z^{n}\right\}_{n\geq 0}$ is an orthonormal basis for $\mathcal{H}_{K}$ and $\left\{\sqrt{\frac{a_{n-1}}{a_{n}}}\right\}_{n\geq 1}$ is the weight sequence of $M^*_z$. It can be computed that
\begin{equation}\label{equ1}
K(z,w)=\sum\limits_{n=0}^{\infty}\left(\sum\limits_{\mbox{\tiny$\begin{array}{c}
i+j^{5}=n\\ i,j\in\mathbb{N}\end{array}$}}\frac{1}{(i+1)(j+1)^{8}}\right)z^{n}\overline{w}^{n}=\left(\sum\limits_{i=0}^{\infty}\frac{z^{i}\overline{w}^{i}}{i+1}\right)
\left(\sum\limits_{j=0}^{\infty}\frac{z^{j^{5}}\overline{w}^{j^{5}}}{(j+1)^{8}}\right).
\end{equation}
If $M_{z}^{*}$ satisfies $\sum\limits_{s=1}^{\infty}\frac{\Vert M_{z}^{*s}\Vert^{2}}{s+1}\leq 1,$ then
$$\left\langle\left(I-\sum\limits_{s=1}^{\infty}\frac{ M_{z}^{s}M_{z}^{*s}}{s+1}\right)x,x\right\rangle
=\Vert x\Vert^{2}-\sum\limits_{s=1}^{\infty}\frac{\Vert M_{z}^{*s}x\Vert^{2}}{s+1}
\geq\left(1-\sum\limits_{s=1}^{\infty}\frac{\Vert M_{z}^{*s}\Vert^{2}}{s+1}\right)\Vert x\Vert^{2}
\geq0, \quad x\in\mathcal{H}_{K}.$$
It follows that $\sum\limits_{s=1}^{\infty}\frac{M_{z}^{s}M_{z}^{*s}}{s+1}\leq I$, so that $a_{n}-\frac{1}{2}a_{n-1}-\cdots-\frac{1}{n+1}a_{0}\geq 0$ for every positive integer $n$.
However, for $a_{2}=\frac{515}{1536}$, $a_{1}=\frac{129}{256}$ and $a_{0}=1$, $a_{2}-\frac{1}{2}a_{1}-\frac{1}{3}a_{0}=\frac{515}{1536}-\frac{1}{2}\times\frac{129}{256}-\frac{1}{3}=-\frac{1}{4}<0$. This means that $M_{z}^{*}$ does not satisfy $\sum\limits_{s=1}^{\infty}\frac{\Vert M_{z}^{*s}\Vert^{2}}{s+1}\leq 1$.

If we set $\mathcal{E}$ to be a Hermitian holomorphic line bundle over $\mathbb{D}$ with a non-vanishing holomorphic cross-section $\gamma$ satisfying
$\Vert\gamma(w)\Vert^{2}=\sum\limits_{j=0}^{\infty}\frac{|w|^{2j^{5}}}{(j+1)^{8}}$, then from (\ref{equ1}),  $\mathcal{E}_{M_{z}^{*}}\sim_{u}\mathcal{E}_{D^{*}}\otimes\mathcal{E}$ for the backward shift operator $D^{*}$ on the Dirichlet space $\mathcal{D}$.
Let $E=\bigvee\limits_{w\in\mathbb{D}}\gamma(w)$ and let $\{e_{k}\}_{k=0}^{\infty}$ be an orthonormal basis of $E$,
so that $\gamma(w)=\sum\limits_{k=0}^{\infty}\gamma_{k}(w)e_{k},$ where $\gamma_{k}(w)=\frac{w^{k^{5}}}{(k+1)^{4}}$.

Since for $f\in\mathcal{D}$, $\Vert f\Vert_{\mathcal{D}}^{2}=\sum\limits_{n=0}^{\infty}(n+1)|\hat{f}(n)|^{2}<\infty$, it is easily seen that
$\gamma_{k}(z)f(z)=\sum\limits_{n=0}^{\infty}\frac{\hat{f}(n)}{(k+1)^{4}}z^{n+k^{5}}, k \geq 0$. It follows that
 \begin{eqnarray*}
\Vert \gamma_{k}f\Vert_{\mathcal{D}}^{2}
&=&\sum\limits_{n=0}^{\infty}(n+k^{5}+1)\left|\frac{\hat{f}(n)}{(k+1)^{4}}\right|^{2}\\
&=&\frac{1}{(k+1)^{8}}\left[\sum\limits_{n=0}^{\infty}(n+1)|\hat{f}(n)|^{2}+\sum\limits_{n=0}^{\infty}
k^{5}|\hat{f}(n)|^{2}\right]\\
&\leq&\left[\frac{1}{(k+1)^{8}}+\frac{1}{(k+1)^{3}}\right]\Vert f\Vert_{\mathcal{D}}^{2}.
\end{eqnarray*}
Therefore, $\Vert\gamma_{k}\Vert^{2}_{\text{Mult}(\mathcal{D})} \leq\frac{1}{(k+1)^{8}}+\frac{1}{(k+1)^{3}}\leq\frac{2}{(k+1)^{2}}$, and hence,  $$1\leq\Vert\gamma(w)\Vert_{E}^{2}=\sum\limits_{k=0}^{\infty}|\gamma_{k}(w)|^{2}\leq
\sum\limits_{k=0}^{\infty}\Vert\gamma_{k}\Vert_{\infty}^{2}\leq
\sum\limits_{k=0}^{\infty}\Vert\gamma_{k}\Vert^{2}_{\text{Mult}(\mathcal{D})}\leq \frac{\pi^{2}}{3},\quad w\in\mathbb{D}.$$
Using the proof of Theorem \ref{thm2}, we conclude that $M_{z}^{*}\sim_{s}D^{*}$.
\end{example}

In Example \ref{ex4.10}, it was shown that the weighted backward shift operator $M_{z}^{*}$ does not satisfy the condition $\sum\limits_{s=1}^{\infty}\frac{\Vert M_{z}^{*s}\Vert^{2}}{s+1}\leq 1$. Furthermore, it is not straightforward to verify that $M_{z}^{*}$ is similar to $D^{*}$ using the result of Shields, Theorem \ref{lem11}. We now consider an example of a Hilbert space with a non-diagonal reproducing kernel. The proof, being similar to that of Example \ref{ex4.10}, is omitted.

\begin{example}\label{ex4.9}
Let $M_{z}^{*}$ be the backward shift operator on a Hilbert space $\mathcal{H}_{K}$ with reproducing kernel
$${K}(z,w)=1+\frac{5}{8}z\overline{w}+
(z+\overline{w})\sum\limits_{n=1}^{\infty}\frac{1}{8n}z^{n}\overline{w}^{n}
+\sum\limits_{n=2}^{\infty}\left(\sum\limits_{i=1}^{n+1}\frac{1}{i(n+2-i)^{3}}+
\frac{1}{8(n-1)}\right)z^{n}\overline{w}^{n},\quad z,w\in\mathbb{D}.$$
Then  $M_{z}^{*}\sim_{s}D^{*}.$
\end{example}

We close this section with a proposition stating that to a Hermitian holomorphic complex bundle with a tensor product structure satisfying some given conditions, there corresponds a Cowen-Douglas operator. This result establishes the operator realization of the tensor product of holomorphic complex bundles.

\begin{proposition}\label{pro4.13}
Let $\mathcal{E}$ be a Hermitian holomorphic line bundle over $\mathbb{D}$ and let $\gamma$ be a non-vanishing holomorphic cross-section of $\mathcal{E}$. Suppose that
$E=\bigvee\limits_{w\in\mathbb{D}}\gamma(w)$ and $\{e_{i}\}_{i=0}^{\infty}$ is an orthonormal basis for $E$.
If
$$
\delta\leq\Vert\gamma(w)\Vert_{E}^{2}\leq
\sum\limits_{i=0}^{\infty}\Vert\gamma_{i}\Vert^{2}_{\text{Mult}(\mathcal{D}_{\alpha})}<\infty,\quad w\in\mathbb{D},
$$
for $\gamma(w)=\sum\limits_{i=0}^{\infty}\gamma_{i}(w)e_{i}$ and some constant $\delta >0$, then there exists an operator $T\in\mathcal{B}_{1}(\mathbb{D})$ such that $\mathcal{E}_{T}=\mathcal{E}_{D_{\alpha}^{*}}\otimes\mathcal{E}$ and  $T\sim_{s}D_{\alpha}^{*}.$
\end{proposition}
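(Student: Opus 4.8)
The plan is to realize the Hermitian holomorphic line bundle $\mathcal{E}$ as the defect bundle $\mathcal{E}_{D_T}$ of some operator $T$ and then invoke Theorem \ref{thm2} verbatim. First I would use the given condition $\sum_{i=0}^\infty\|\gamma_i\|^2_{\text{Mult}(\mathcal{D}_\alpha)}<\infty$ (after normalizing so that this sum is $\le 1$) to construct $T$ explicitly. Following the recipe in Lemma \ref{lemma4}, set $b_k=(k+1)^{-\alpha}$, let $\{c_k\}$ be the convolution-inverse sequence of $\{b_k\}$, and define $\mathcal{H}_K$ to be the reproducing kernel Hilbert space with kernel $K(z,w)=K_\alpha(z,w)\,\langle\gamma(w),\gamma(z)\rangle_E=\left(\sum_{i=0}^\infty\frac{z^i\overline{w}^i}{(i+1)^\alpha}\right)\left(\sum_{i=0}^\infty\gamma_i(z)\overline{\gamma_i(w)}\right)$; this is a positive definite kernel because it is the pointwise product of two such kernels. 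Let $M_z^*$ be the backward shift on $\mathcal{H}_K$; by construction its eigenbundle factors as $\mathcal{E}_{M_z^*}=\mathcal{E}_{D_\alpha^*}\otimes\mathcal{E}$, which gives the bundle identity claimed in the statement. I then take $T:=M_z^*$ and must check the hypotheses of Theorem \ref{thm2}.

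The verification splits into two parts. The first is showing $\sum_{s=1}^\infty\frac{\|T^s\|^2}{(s+1)^\alpha}\le 1$, equivalently (by the computation in Lemma \ref{lemma4} read backwards) that $\sum_{k=0}^\infty c_k (T^*)^k T^k\ge 0$; since the weight sequence of $M_z^*$ is determined by the kernel coefficients $a_n=\sum_{i=0}^n\frac{1}{(i+1)^\alpha}|\gamma_i'|^2$-type expressions — more precisely $a_n$ is the $n$-th Taylor coefficient of $K(z,z)$ in $|z|^2$ — one checks the partial-sum inequality $a_n - b_1 a_{n-1} - \cdots - b_n a_0 \ge 0$ directly from the factorization $a_n = \sum_{p+q=n}\frac{1}{(p+1)^\alpha}\,\mu_q$ where $\mu_q=\sum_i|\widehat{\gamma_i}(q)|^2$, exactly as in the positivity argument of Lemma \ref{lemma4}. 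The second part is simply that the holomorphic cross-section $\gamma(w)=\sum_i\gamma_i(w)e_i$ of $\mathcal{E}_{D_T}=\mathcal{E}$ satisfies the sandwich condition (\ref{equ4.10}), which is precisely the hypothesis we assumed, since $\delta\le\|\gamma(w)\|_E^2\le\sum_i\|\gamma_i\|^2_{\text{Mult}(\mathcal{D}_\alpha)}<\infty$ is stated outright.

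With both hypotheses in hand, Theorem \ref{thm2} yields $T\sim_s D_\alpha^*$, and the construction already gives $\mathcal{E}_T=\mathcal{E}_{D_\alpha^*}\otimes\mathcal{E}$, completing the proof. I expect the main obstacle to be the bookkeeping that identifies the abstract defect bundle $\mathcal{E}_{D_T}$ produced by Lemma \ref{lemma4} with the given bundle $\mathcal{E}$: one needs that the unitary $V$ of Lemma \ref{lemma4} carries the eigenvector $K(\cdot,\overline w)$ of $M_z^*$ to $K_\alpha(\cdot,\overline w)\otimes\gamma(w)$ in $\mathcal{D}_{\alpha,E}$, so that the induced frame $D_T\gamma$ of $\mathcal{E}_{D_T}$ is (up to a non-vanishing holomorphic scalar, which does not affect the metric bounds) the given frame $\gamma$. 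Once this identification is made cleanly — it follows from matching reproducing kernels, since $\langle K(\cdot,\overline w),K(\cdot,\overline u)\rangle_{\mathcal{H}_K}=K_\alpha(u,w)\langle\gamma(w),\gamma(u)\rangle_E$ — the rest is a direct appeal to the already-proven theorem, and the proof is essentially the same as that of Theorem \ref{thm2} run in the reverse direction, as the statement preceding the proposition indicates.
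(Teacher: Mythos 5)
The paper does not print a proof of Proposition \ref{pro4.13}, so I am judging your proposal on its own terms. Your overall architecture is the right one: build $T$ as the backward shift on the space with kernel $K(z,w)=K_{\alpha}(z,w)\sum_{i}\gamma_{i}(z)\overline{\gamma_{i}(w)}$ (equivalently, as the restriction of $D^{*}_{\alpha,E}$ to $\mathcal{N}=\bigvee_{w}\{K_{\alpha}(\cdot,\overline{w})\otimes\gamma(w)\}$), so that $\mathcal{E}_{T}=\mathcal{E}_{D_{\alpha}^{*}}\otimes\mathcal{E}$ by construction, and then run the corona/multiplier argument. This is exactly the pattern of Examples \ref{ex4.91} and \ref{ex4.10}.

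The genuine gap is your first verification step. You claim the constructed $T$ satisfies $\sum_{s\geq1}\Vert T^{s}\Vert^{2}/(s+1)^{\alpha}\leq1$, deducing the partial-sum inequality $a_{n}-b_{1}a_{n-1}-\cdots-b_{n}a_{0}\geq0$ "directly from the factorization" $a_{n}=\sum_{p+q=n}b_{p}\mu_{q}$ with $\mu_{q}\geq0$. This is false, and the paper's own Example \ref{ex4.10} is a counterexample: there the kernel has precisely this product form (with $\alpha=1$, $\mu_{q}=\sum_{i}|\widehat{\gamma_{i}}(q)|^{2}$ supported on fifth powers), the sandwich condition $1\leq\Vert\gamma(w)\Vert_{E}^{2}\leq\sum_{k}\Vert\gamma_{k}\Vert^{2}_{\text{Mult}(\mathcal{D})}\leq\pi^{2}/3$ holds, and yet $a_{2}-\tfrac12 a_{1}-\tfrac13 a_{0}=-\tfrac14<0$, so the norm condition fails. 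What the product structure actually gives is $\sum_{k}c_{k}a_{n-k}=\mu_{n}\geq0$, i.e.\ $a_{n}\geq\sum_{k\geq1}|c_{k}|a_{n-k}$; since $|c_{k}|\leq b_{k}$ this is strictly weaker than $a_{n}\geq\sum_{k\geq1}b_{k}a_{n-k}$, and your asserted "equivalence" between the norm condition and positivity of $\sum_{k}c_{k}(T^{*})^{k}T^{k}$ is also only a one-way implication in the paper. Consequently you cannot invoke Theorem \ref{thm2} as a black box. The repair is straightforward and is what the paper signals in Example \ref{ex4.10} ("Using the proof of Theorem \ref{thm2}\dots"): the hypothesis $\sum_{s}\Vert T^{s}\Vert^{2}/(s+1)^{\alpha}\leq1$ is used in Theorem \ref{thm2} only to manufacture the tensor decomposition $\mathcal{E}_{T}\sim_{u}\mathcal{E}_{D_{\alpha}^{*}}\otimes\mathcal{E}_{D_{T}}$, which here you have for free by construction. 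So skip that hypothesis entirely: from $\sum_{i}\Vert\gamma_{i}\Vert^{2}_{\text{Mult}(\mathcal{D}_{\alpha})}<\infty$ conclude $F\in\text{Mult}(\mathcal{D}_{\alpha},\mathcal{D}_{\alpha,E})$ with $F(w)(1)=\gamma(w)$; from $\Vert\gamma(w)\Vert_{E}^{2}\geq\delta$ and Theorems \ref{rowcolumn} and \ref{tavan} produce the left inverse $\widetilde{G}$; then $X:=M^{*}_{(F^{\#})^{*}}:\mathcal{D}_{\alpha}\to\mathcal{N}$ is bounded and invertible, $T:=XD_{\alpha}^{*}X^{-1}$ lies in $\mathcal{B}_{1}(\mathbb{D})$ with $\mathcal{E}_{T}(w)=X(\mathcal{E}_{D_{\alpha}^{*}}(w))=(\mathcal{E}_{D_{\alpha}^{*}}\otimes\mathcal{E})(w)$, and $T\sim_{s}D_{\alpha}^{*}$ is immediate. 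With that substitution your proof closes; your final paragraph on identifying $\mathcal{E}_{D_{T}}$ with $\mathcal{E}$ via matching reproducing kernels is fine but becomes unnecessary once $T$ is defined as a conjugate of $D_{\alpha}^{*}$ by $X$.
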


\section*{Concluding remarks}

Recall that the Cowen-Douglas conjecture states that for operators $T$ and $S$ in $\mathcal{B}_{1}(\mathbb{D})$ having $\overline{\mathbb{D}}$ as a $K$-spectral set, $T$ is similar to $S$ if and only if $\lim\limits_{\vert w\vert\rightarrow 1}\mathcal{K}_{T}(w)/\mathcal{K}_{S}(w)=1$. In \cite{CM2,CM}, Clark and Misra constructed examples to show that the Cowen-Douglas conjecture is false and claimed that the ratio of the metrics can be a better indication of similarity than the ratio of the curvatures.
In particular, several examples are provided in Theorem 1 of \cite{CM} to support this claim.
Theorem \ref{t1} and Proposition \ref{3.6} in this note offer further evidence.

Every Cowen-Douglas operator is unitarily equivalent to the adjoint of a multiplication operator on a reproducing kernel Hilbert space consisting of holomorphic functions.
For the Hardy and weighted Bergman spaces, Theorem 1 in \cite{CM}, Theorem 3.8 in \cite{UM1990}, Theorem 3.4 in \cite{JJ2022}, and Theorem \ref{t1} and Proposition \ref{3.6} of this paper suggest that, under certain conditions, the ratio of the metrics is a similarity invariant of the backward shift operator on these spaces.
In particular, for an $n$-hypercontraction $T\in\mathcal{B}_{1}(\mathbb{D})$, from Theorem 0.1 in \cite{KT}, Theorem 2.4 in \cite{DKT}, Theorem 2.3 in \cite{HJK}, and Theorem \ref{t1} of this paper, we conclude that $T$ is similar to $S_{n}^{*}$ on the reproducing kernel Hilbert space $\mathcal{M}_n$ if and only if $a_{w}:=h_{\mathcal{E}_{T}}(w)/h_{\mathcal{E}_{S_{n}^{*}}}(w)$ is bounded and bounded away from $0$ if and only if the curvature $\mathcal{K}_{\mathcal{E}_{D_{T}}}$ of the holomorphic line bundle $\mathcal{E}_{D_{T}}$
induced by the defect operator $D_{T}$ satisfies $\mathcal{K}_{\mathcal{E}_{D_{T}}}=-\frac{\partial^{2}}{\partial w \partial \overline{w}}\varphi (w)$ for some bounded subharmonic function $\varphi$ defined on $\mathbb{D}$.

Clark and Misra constructed contractive backward shift operators in 2.5 of \cite{CM}, such that the ratio of the metrics of the holomorphic line bundles associated with the operators is bounded and bounded away from $0$ that are not similar to each other. This indicates that the ratio of the metrics is not a similarity invariant for all backward shift operators on Hilbert spaces with diagonal reproducing kernels. One can consider a follow-up question.

\begin{question}\label{que1}
Is the ratio of the metrics of holomorphic line bundles a similarity invariant for the backward shift operator on the weighted Dirichlet space?
\end{question}

A similarity invariant for backward shift operators on weighted Dirichlet spaces, unfortunately, cannot be obtained from previous statements about Hardy and weighted Bergman spaces. The main reason for this has to do with the corresponding multiplier algebra; as is well-known, the multiplier algebra of weighted Dirichlet spaces is not equal to $H^{\infty}(\mathbb{D})$. For the similarity of backward shift operators on weighted Dirichlet spaces, assuming that the ratio of the metrics is bounded and bounded away from $0$, we have introduced new perspective on the problem by considering holomorphic jet bundles and provided two sufficient conditions in Theorems \ref{thm1} and \ref{thm2}. Meanwhile, as can be seen through Examples \ref{ex4.91} and \ref{ex4.10}, it is challenging to construct a backward shift operator that satisfies the following two conditions: (1) the ratio of the metric of the holomorphic line bundle induced by the operator to the metric of the holomorphic line bundle associated with the backward shift operator on the Dirichlet space is bounded (or unbounded or tends to $0$ as $|w|\rightarrow 1$), (2) the operator is not similar to the backward shift operator on the Dirichlet space.

\section*{Acknowledgment}

We gratefully acknowledge the valuable help of Y. H. Zhang and J. M. Yang on Example \ref{ex4.91}.

\end{document}